\newcommand{\say}[1]{``#1''}
\setlist{nolistsep}
\newcommand{\diam}{\operatorname{diam}}
\newcommand{\cE}{\mathcal{E}}
\newcommand{\sign}{\text{sign}}
\newcommand{\epse}{\kappa} 
\newcommand{\epsp}{\varepsilon} 
\newcommand{\RR}{\mathbb R}
\newcommand{\NN}{\mathbb N}
\def\Eins{{\mathchoice {\mathrm{1\mskip-4mu l}} {\mathrm{1\mskip-4mu l}}%
					{\mathrm{1\mskip-4.5mu l}} {\mathrm{1\mskip-5mu l}}}}
\newcommand{\cH}{\mathcal{H}}
\newcommand{\cY}{\mathcal{Y}}
\newcommand{\eps}{\varepsilon}
\newcommand{\cof}{\operatorname{cof}}
\newcommand{\mbf}[1]{\boldsymbol{#1}}
\newcommand{\To}{\longrightarrow}
\newcommand{\abs}[1]{\left\vert#1\right\vert}
\newcommand{\absb}[1]{\big\vert#1\big\vert}
\newcommand{\norm}[1]{\left\|#1\right\|}
\newcommand{\normn}[1]{\|#1\|}
\newcommand{\dist}[2] {\operatorname{dist}\left(#1;#2\right)}
\newcommand{\Dist}[2] {\operatorname{Dist}\left(#1,#2\right)}
\newcommand{\mysetr}[2] {\left\{#1\,\left|\,#2\right.\right\}}
\newcommand{\mysetl}[2] {\left\{\left.#1\,\right|\,#2\right\}}
\newcommand{\identity}{\operatorname{id}}
\newcommand{\idmatrix}{\Eins}
\newcommand{\be}{\begin{equation}}
\newcommand{\ee}{\end{equation}}
\newcommand{\bald}{\begin{aligned}}
\newcommand{\eald}{\end{aligned}}
\newcommand{\baldat}{\begin{alignedat}}
\newcommand{\ealdat}{\end{alignedat}}
\newcommand{\AIB}{\operatorname{AIB}}
\newcommand{\BiLip}{\operatorname{BiLip}}
\newcommand{\dx}{\,\mbox{d}x}
\newcommand{\dz}{\,\mbox{d}z}
\newcommand{\dr}{\,\mbox{d}r}
\newcommand{\ds}{\,\mbox{d}s}
\newcommand{\dt}{\,\mbox{d}t}
\newcommand{\dtx}{\,\mbox{d}\tilde{x}}
\DeclareMathOperator{\tr}{tr}
\newtheorem{theorem}{Theorem}
\newtheorem{thm}[theorem]{Theorem}
\newtheorem{cor}[theorem]{Corollary}
\newtheorem{lem}[theorem]{Lemma}
\newtheorem{prop}[theorem]{Proposition}
\theoremstyle{definition}
\newtheorem{defn}[theorem]{Definition}
\newtheorem{ex}[theorem]{Example}
\theoremstyle{remark}
\newtheorem{rem}[theorem]{Remark}
\renewcommand{\proofname}{\bfseries{Proof}}
\renewenvironment{proof}[1][\proofname]{\par
  \normalfont
  \trivlist
  \item[\hskip\labelsep\itshape
    \bfseries{#1.}]\ignorespaces
}{%
  \qed\endtrivlist
}
\numberwithin{equation}{section}
\begin{document}

\title[Surface penalization of self-interpenetration]{Surface penalization of self-interpenetration in linear and nonlinear elasticity}
\author[Stefan Kr\"omer]{Stefan Kr\"omer}
\address{Stefan Kr\"omer, Institute of Information Theory and Automation, Czech Academy of Sciences, Pod vod\'{a}renskou v\v{e}\v{z}\'{\i}~4, 182~08~Praha~8, Czech Republic, \email{}{skroemer@utia.cas.cz}}
\author[Jan Valdman]{Jan Valdman}
\address{Jan Valdman, Institute of Information Theory and Automation, Czech Academy of Sciences, Pod vod\'{a}renskou v\v{e}\v{z}\'{\i}~4, 182~08~Praha~8, 
Czech Republic and Faculty of Information Technology, Czech Technical University in Prague, Th\'{a}kurova 9, 160~00~Praha~6, Czech Republic,
\email{}{jan.valdman@utia.cas.cz}}
\date{\today}

\begin{abstract}
We analyze a term penalizing surface self-penetration, 
as a soft constraint for models of hyperelastic materials to approximate the Ciarlet-Ne\v{c}as condition (almost everywhere global invertibility of deformations).
For a linear elastic energy subject to an additional local invertibility constraint,
we prove that the penalized elastic functionals converge to the original functional subject to the Ciarlet-Ne\v{c}as condition.
The approach also works for nonlinear models of non-simple materials including a suitable higher order term in the elastic energy, without artificial local constraints.
Numerical experiments illustrate our results for a self-contact problem in 3d.
\end{abstract}

\subjclass{}
\keywords{Elasticity, global injectivity and self-contact, locking constraints, nonsimple materials, Ciarlet-Ne\v{c}as-condition, approximation}

\maketitle

\section{Introduction\label{sec:intro}}

This article is a follow-up of \cite{KroeVa19a}, contributing to an ongoing effort of obtaining a mathematically rigorous computational approach for obtaining steady states or global energy minimizers in models of elastic solids (for general background, see \cite{Sil97B,Cia88B,Ant05B}, e.g.) featuring a global non-penetration constraint reflecting non-interpenetrability of matter. Here, we consider an approximation scheme \say{rigorous} if its solutions 
can be proved to converge in some sense to a solution of the original constrained problem. As we intend to focus on large deformations, we naturally have to study problems of self-contact (frictionless, which is the most simple case). Such problems are always inherently nonconvex, even if a linear elastic model is used for the local response of the material to stresses.
As a consequence, many techniques developed for rigid substrate contact problems, including a formulation of the problem as a variational inequality, fail. 
Even models for one-dimensional rods in 2d already encounter some subtleties \cite{MaPeTo12a}. 
Moreover, results related to Lagrange multiplier theory are only available for elastic models of non-simple materials which offer higher order regularity \cite{PaHea17a,Pa18a}.
The study of critical points is therefore largely out of reach on the analytical level, 
and we are left with the more accessible theory for global energy minimizers as pioneered by Ball for hyperelasticity \cite{Ba77a}. 
On the level of numerical convergence results, we have to handle the possibility of non-uniqueness of solutions, and as a consequence, convergence to one of  them can typically only be ensured for suitable subsequences of the approximate solutions. As we are in a variational framework, we will take advantage of the language of De Giogi's $\Gamma$-convergence to express this, cf.~Remark~\ref{rem:Gamma-convergence} (or the slightly stronger Mosco-convergence).

On the level of the model, a non-penetration constraint translates to global injectivity of the deformation map $y:\Omega\to \RR^d$ mapping the \say{reference configuration} $\Omega\subset \RR^d$ to the deformed state (typically in dimension $d=3$). In suitable spaces of orientation preserving deformations, almost everywhere global injectivity
of $y$ is equivalent to the well-known Ciarlet-Ne\v{c}as condition
\cite{CiaNe87a}
\begin{align}\label{ciarletnecas}
\int_\Omega \abs{\det(\nabla y)} \dx \leq \abs{y(\Omega)}.
\end{align}
As no rigorous and computationally feasible projection onto the constraint \eqref{ciarletnecas} is known, we approach it with a penalty method, roughly following \cite{KroeVa19a}. 
For a class of nonlinear elastic models leading to deformations of bounded distortion, major progress has recently been made in \cite{KroeRei22Pa}, which uses a completely different penalty term which allows successful convergence analysis without any additional local constraints or higher order regularity as in \cite{KroeVa19a}. 

In this paper, we will follow a more practically-minded path, combining a new penalty term supported only on the boundary (or the piece where expected contact) with a linear elastic model. It is introduced  and analyzed  in Section~\ref{sec:pen}.  One important feature of our penalty is that like its bulk variant 
in \cite{KroeVa19a}, it can fully prevent interpenetration as shown in Corollary~\ref{cor:boundaryinvert}. 
Its convergence in combination with elastic energies, also discretized, is discussed in Section~\ref{sec:conv}. 
Compared to \cite{KroeVa19a}, restricting the penalization to the boundary allows a major reduction of computational cost, effectively reducing the dimension by two, as our penalty term is a nonlocal double integral. Here, notice that some degree of nonlocality is unavoidable -- it simply reflects the nonlocal nature of global injectivity.   

 Of course, our choice of a linear elastic model is largely arbitrary. In fact, our analysis does not even exploit it 
beyond the fact that it avoids the determinant singularity of fully nonlinearly elastic energy densities. 
We prefer it as the main example here because we believe that linear elastic models are used in the vast majority of practical computations, especially in 3d, simply due to their much lower computational cost. Besides, in our concrete numerical experiments, this means that the performance gain from a more efficient interpenetration penalty becomes much more noticable.

Using a linear elastic model comes with the caveat that by itself, it is unable to enforce locally orientation preserving deformations, and so we have to supplement it with a 
local\footnote{actually, it is slightly nonlocal, but with arbitrarily short range} locking-type constraint \eqref{idangleconstraint} which implies a local bi-Lipschitz property and thus prevents local loss of injectivity. 
As explained in greater detail in Subsection~\ref{ssec:LE}, we do not expect this local constraint to represent actual material properties; instead, our philosophy here is that if \eqref{idangleconstraint} (with generously chosen constants) is violated or active, 
deformation gradients deviate so far from the identity that
the linear elastic model is too poor an approximation of reality, anyway. In particular, we do not try to enforce \eqref{idangleconstraint} in our numerical experiments, although it can be checked a posteriori. 
It mainly serves as a restriction necessary so that our convergence analysis is valid, see Theorem~\ref{thm:cnsoftboundary} and our main theoretical result, Theorem~\ref{thm:convergenceLE}.  At the same time, as further discussed below, it avoids a Lavrentiev phenomenon in context of finite element approximations. 
For comparison with \cite{KroeVa19a}, we also provide a convergence result featuring a nonlinear elastic energy 
for non-simple materials involving higher order derivatives, see
Theorem~\ref{thm:convergence}.  By results of \cite{HeaKroe09a,KroeVa19a}, this regularized energy 
implicitly enforces a local bi-Lipschitz property of deformations \eqref{biLi} acting as a suitable replacement for \eqref{idangleconstraint}. 

To keep technicalities to a minimum, the theoretical 
part does not involve partial Dirichlet boundary conditions or force terms, although typical examples of the latter are trivial to add. This is further discussed in Subsection~\ref{ssec:remarks}.

We also point out that there is a rather straightforward and rigorous way to obtain a penalty term directly based on~\eqref{ciarletnecas}, see for instance \cite{MieRou16a}: 

\begin{align*}
	E^{\rm penalty}_\eps(y):=\frac{1}{\eps}\Big(\int_\Omega \abs{\det(\nabla y)} \dx -\abs{y(\Omega)}\Big),\quad\text{with $\eps>0$ small}.
\end{align*}

This approach seems to have major computational disadvantages however, including a very high computational cost and a non-smooth character. We are not aware of any practical implementation of it.  In addition, unlike our penalty terms, it vanishes on all injective deformations and thus always leads to a little bit of interpenetration in computations if the approximated solution is in self-contact with nonvanishing surface contact forces. In particular, 
it probably cannot be generalized at all to a more efficient boundary variant, cf.~Remark \ref{rem:CNboundary}. 

 Apart from that,  there are already many numerical approaches based on heuristical arguments, emphasizing performance but lacking a proof of convergence, see for instance \cite{Pa11a,AigLi2013a,BoZaKoRa15a,FaPouRe16a}.
There also is a rich literature for 
energy terms with self-repulsive properties for curves or surfaces, including numerical results, see e.g.~
\cite{BaReiRie18a,BaRei20a,BaRei21a,BlaReiSchiVo22Pa,YuBraSchuKee21a}.
These typically require higher regularity though, at least $C^1$ which is more than we want to impose here, as this can collide with 
a possible Lavrentiev phenomenon \cite{FoHruMi03a}.

Generally, numerical approximation in the presence of constraints related to local invertibility or orientation preservation
has to handled with care 
to avoid possible Lavrentiev phenomenona or related issues
\cite{Ne90a,BaSu92a}. 
In fact, even the density of finite element spaces in sets of admissible orientation-preserving deformations with finite energy is often nontrivial, as illustrated by the still open Ball-Evans problem \cite{Ba02a}. We also refer to \cite{DaPra14a,DePhPra20a,IwOnnRa21a} for some results in dimension $d=2$ and to \cite{CaHeTe18a} for a counterexample
in Sobolev spaces with low integrability. 
To justify our assumptions in Theorem~\ref{thm:convergenceLE} in this regard, we explicitly show the existence of almost conforming finite elements for our constrained linear elastic model in Proposition~\ref{prop:confFE}. In fact, this is the reason for using the constraint \eqref{idangleconstraint} instead of a straightforward local bi-Lipschitz property like \eqref{biLi}.

Our practical numerical experiments are presented in Section~\ref{sec:num}. There, we heavily exploit the linear elastic model using a Schur complement method, effectively pre-solving the linear elastic problem in the interior to transform the whole problem
to one exclusively depending on boundary nodes.
In practice, we can even use a prescribed subset of \say{non-penetration} boundary nodes, if we restrict the penalty energy to the the associated boundary part. In this way, a priori intuition about the expected contact set can be exploited to further reduce the cost of the computation.
%
%
%
%
\section{Penalization terms for the Ciarlet-Ne\v{c}as condition\label{sec:pen}}

The basic idea of a penalty method in variational context is to replace a given constraint by an additional ``penalization'' term in the energy. This term should roughly approximate a functional that vanishes where the constraint holds while it is infinite elsewhere. For asymptotics, it is 
natural to have this approximation governed by a penalization parameter, $\epsp>0$ below, where $\epsp=0$ at least formally corresponds to the case where the original constraint is again perfectly enforced. 

Before we introduce a new penalization term $E^{\partial\Omega}_{\epsp}$ 
 for the Ciarlet-Ne\v{c}as condition  acting only of the surface, let us recall the bulk version introduced in \cite{KroeVa19a}
which serves as a role model. It is given as follows:
\begin{align}\label{sCN1}
\baldat{2}
	&E^{CN}_{\epsp}(y)&&:=
	\frac{1}{\epsp^{\beta}}
	\int_{\Omega} d^{CN}_{\epsp,y}(x)\dx\quad\text{with}\\
	&d^{CN}_{\epsp,y}(x)&&:=\int_{\Omega}
	\frac{1}{\epsp^d} \left[g(\abs{\tilde{x}-x})-g\Big(\frac{\abs{y(\tilde{x})-y(x)}}{\epsp}\Big)\right]^+
	\dtx,
\ealdat
\end{align}
where $[a]^+:=\max\{0,a\}$ denotes the positive part, $\beta>0$ is a constant and
\begin{align}\label{sCN1b}
\bald
	&g:[0,\infty)\to [0,\infty)~~\text{is a continuous,}\\
	&\text{strictly increasing function with $g(0)=0$.} 
\eald
\end{align}
For locally bi-Lipschitz deformations $y$, as $\epsp\to 0$, $E^{\rm CN}_{\epsp}(y)$ converges to zero whenever $y$ satisfies the Ciarlet-Ne\v{c}as condition \eqref{ciarletnecas} and is out of self-contact, and to $+\infty$ if \eqref{ciarletnecas} is violated. Borderline cases with self-contact are more subtle but also lead to the correct limit if $y$ is replaced by an appropriate approximating sequence (a suitable recovery sequence in the spirit of Gamma convergence). For more details see \cite[Thm.~3.3 and Thm. 4.6]{KroeVa19a}.

One of the disadvantages of $E^{\rm CN}_{\epsp}$ is its nonlocal nature and the ensuing computational complexity for numerical evaluation: Typically, a single evaluation of the double integral has a cost of the order of $h^{-2d}$ elementary operations, where $h$ is the grid size and $d$ the dimension of the reference configuration $\Omega$. 
While it is clear that the nonlocal nature of global invertibility will also be reflected in any associated penalty term,
the computational cost can be reduced if we work with integration over the boundary $\partial\Omega$ instead of the full domain, effectively decreasing the dimension.

As we will show, this is possible while retaining the main effect of the penalty term, using the following surface variant:
\begin{align}\label{sCN2}
\baldat{2}
	&E^{\partial\Omega}_{\epsp}(y)&&:=
	\frac{1}{\epsp^\beta}
 \int_{\partial \Omega} d^{\partial\Omega}_{\epsp,y}(x)\,\mbox{d}\cH^{d-1}(x)\quad\text{with}\\
	&d^{\partial\Omega}_{\epsp,y}(x)&&:=\int_{\partial \Omega}
	\frac{1}{\epsp^{d-1}} P\left(g(\abs{\tilde{x}-x})-g\Big(\frac{\abs{y(\tilde{x})-y(x)}}{\epsp}\Big)\right)
	\mbox{d}\cH^{d-1}(\tilde{x}),
\ealdat
\end{align}
where $\beta> 0 $, $g$ satisfies \eqref{sCN1b} as before, $\cH^{d-1}$ denotes the $(d-1)$-dimensional Hausdorff measure (the surface measure)	and
$P$ is an approximation of the positive part $[\cdot]^+$ in the sense that
\begin{align}\label{sCN1c}
\begin{aligned}
	&P:\RR\to [0,\infty)~~\text{is continuous with}~~[t-1]^+\leq P(t) \leq [t]^+,~~\\
	&\text{and}~~P(t)>0 ~~\text{for}~~t>0.
\end{aligned}
\end{align}
In practice, we choose $P$ and $g$ as smooth approximations of the map $t\mapsto [t]^+$. 

Illustrations of $d^{CN}_{\epsp,y}$ and $d^{\partial\Omega}_{\epsp,y}$ for a 2d deformation with self-penetration are given in Figure~\ref{fig:1-4}  and were generated by the complementary code of \cite{KroeVa19a} .  
\begin{figure}[htb]
    \begin{minipage}[t]{.45\textwidth}
        \centering
        \includegraphics[width=\textwidth]{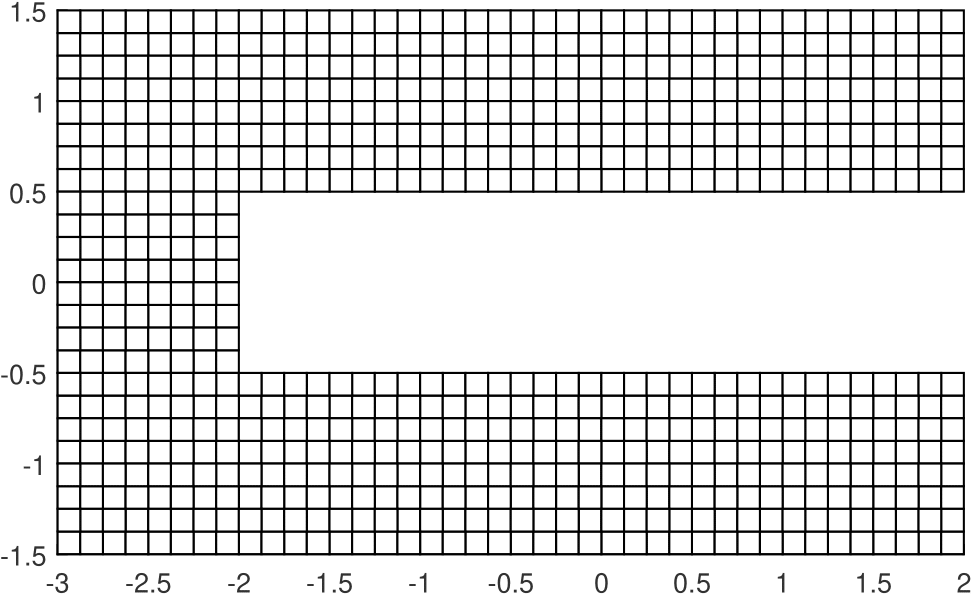}
        \subcaption{Undeformed domain.}
        \vspace{0.5cm}
	  \includegraphics[width=\textwidth]{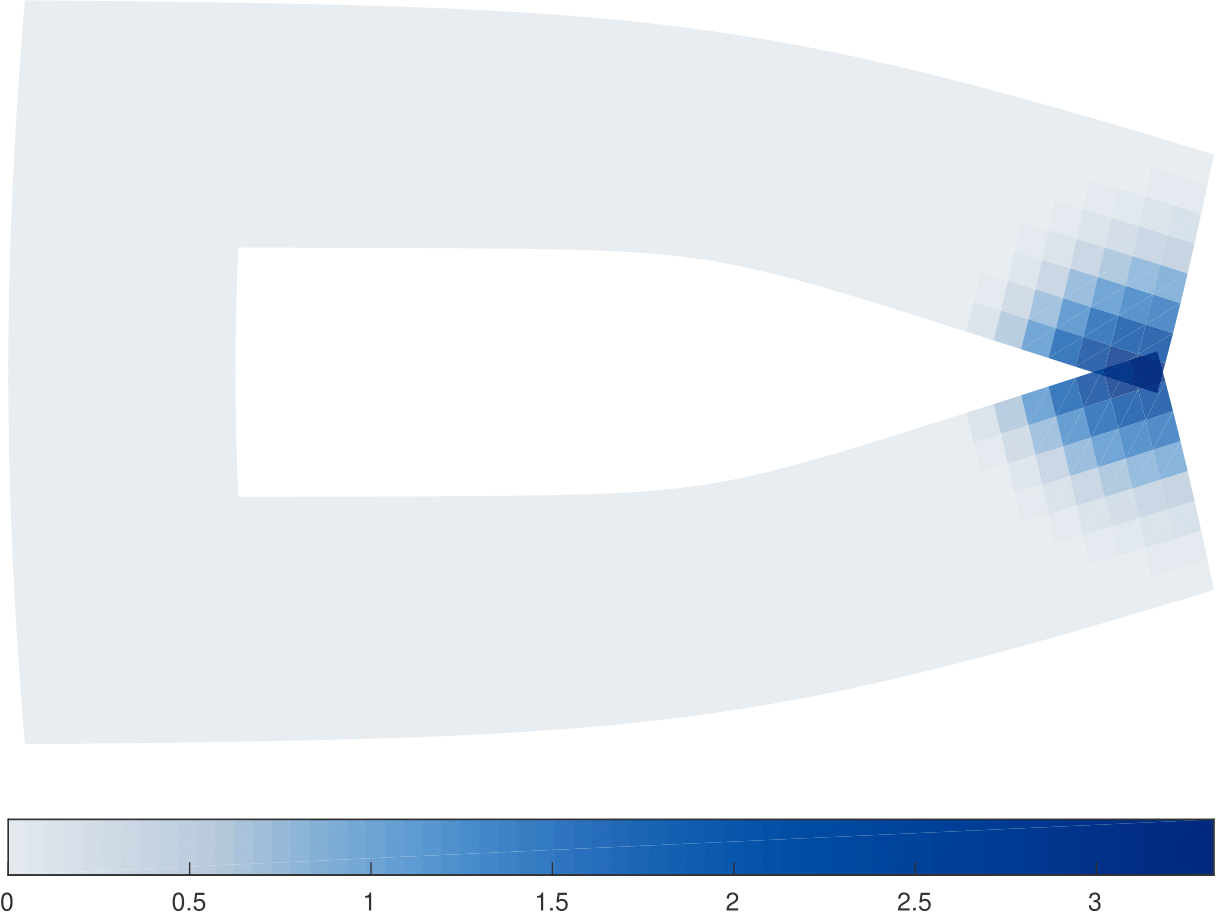}
        \subcaption{Density $d^{CN}_{\epsp,y}(x)$ for $\epsilon=1/2$.}
        \vspace{0.5cm}
	  \includegraphics[width=\textwidth]{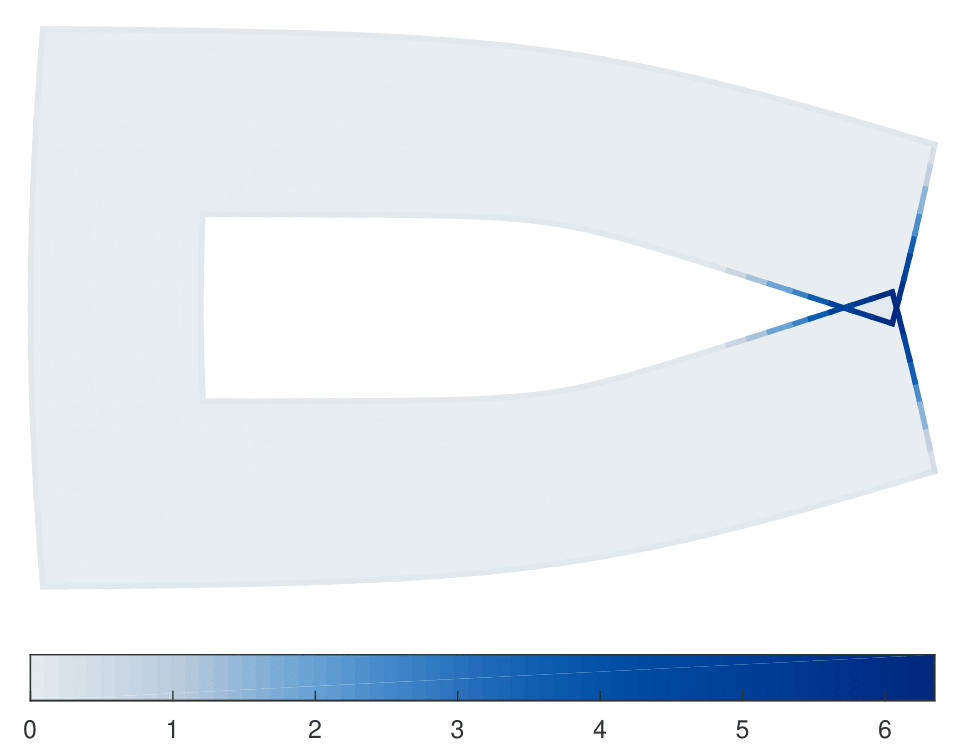}
        \subcaption{Density $d^{\partial\Omega}_{\epsp,y}(x)$ for $\epsilon=1/2$.}
    \end{minipage}
    \hfill
    \begin{minipage}[t]{.45\textwidth}
        \centering
        \includegraphics[width=\textwidth]{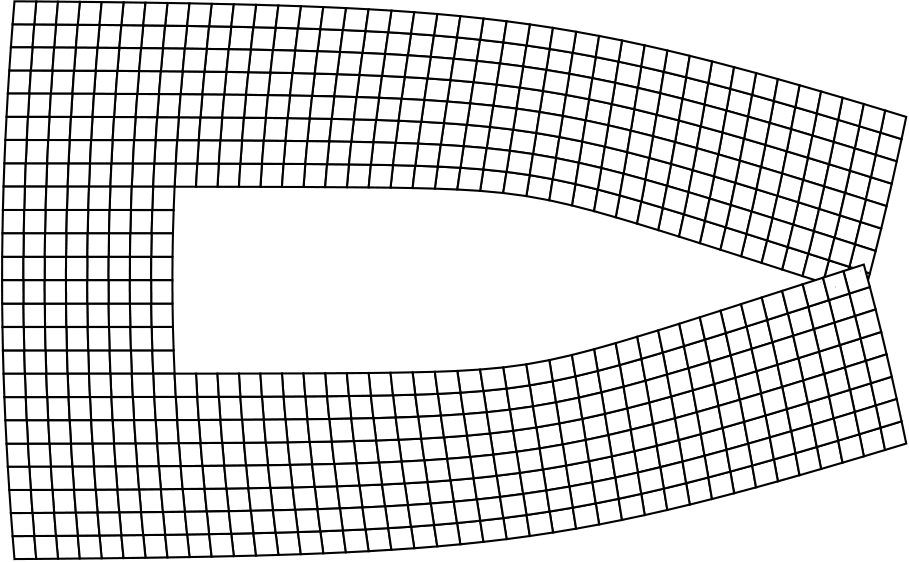}
        \subcaption{Deformed domain.}
				\vspace{0.5cm}			
				\includegraphics[width=\textwidth]{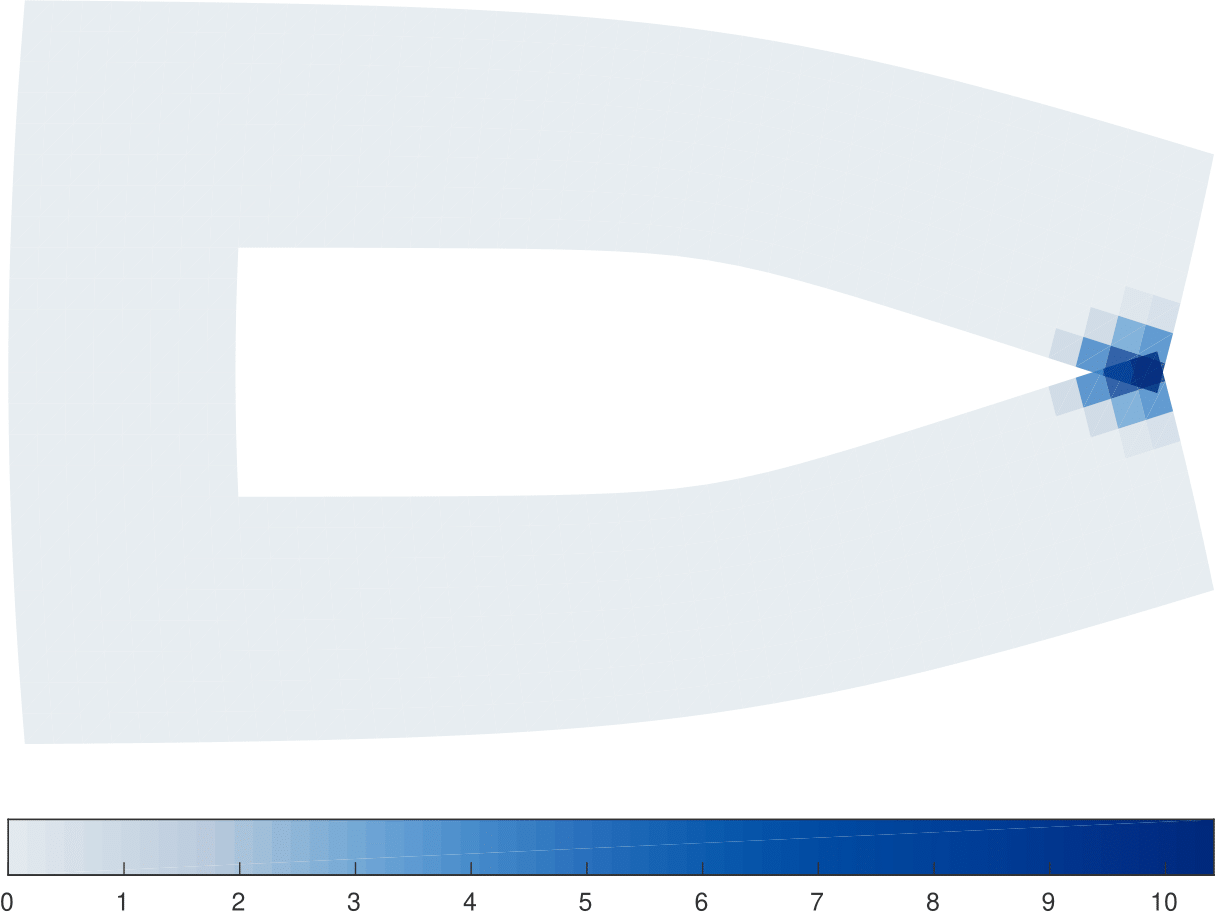}
        \subcaption{Density $d^{CN}_{\epsp,y}(x)$ for $\epsilon=1/4$.}
        \vspace{0.5cm}			
				\includegraphics[width=\textwidth]{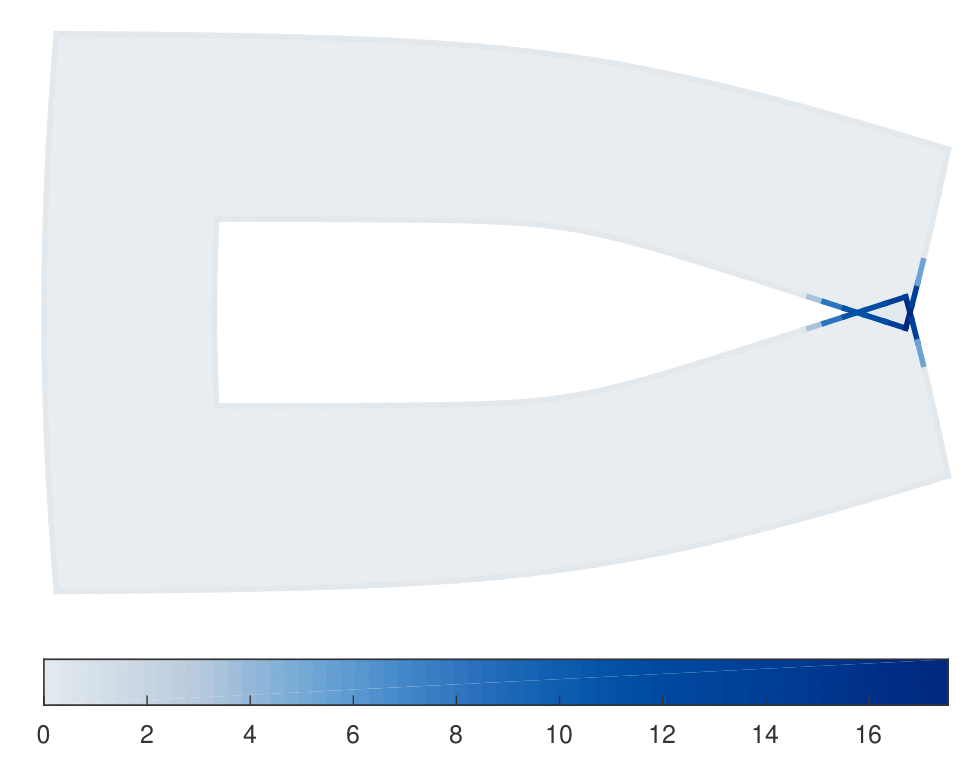}
        \subcaption{Density $d^{\partial\Omega}_{\epsp,y}(x)$ for $\epsilon=1/4$.}
    \end{minipage}  
    \caption{Pincers domain under given deformation.}\label{fig:1-4}
\end{figure}

\subsection{Analytic investigation of the penalty term}

We now analyze the behavior of $E^{\partial\Omega}_{\epsp}$ as $\epsp\to 0$. 

\begin{minipage}{\textwidth}
\begin{thm}[Asymptotics of the surface penalty term \eqref{sCN2}]\label{thm:cnsoftboundary}
~\\
Let $\Omega\subset \RR^d$ be a bounded Lipschitz domain, 
let $E^{\partial\Omega}_{\epsp}$ be the functional defined in \eqref{sCN2} with $\beta>0$, let $g$ satisfy \eqref{sCN1b} and $P$ satisfy \eqref{sCN1c}.  Moreover,  suppose that
$y\in C^{0}(\partial\Omega;\RR^d)$ 
is locally bi-Lipschitz in the sense that
\begin{align}\label{biLiB}
\bald
    l \abs{x_1-x_2} \leq 
	\abs{y(x_1)-y(x_2)} \leq L \abs{x_1-x_2}&\\
	\text{for all $x_1,x_2\in \partial\Omega$ with $|x_1-x_2|<\varrho$}&,
\eald
\end{align}
with some constants $\varrho,l,L>0$.  Finally, for $s\geq 0$, define the set 
\[
	P_{y}(s):=\mysetr{x\in \partial\Omega}{
	\exists \tilde{x}\in \partial\Omega:
	\abs{y(x)-y(\tilde{x})}\leq s~~\text{and}~~\abs{x-\tilde{x}}>\tfrac{\varrho}{2}
	}.
\] 
Then there exist
constants $r, R, a,A,\bar{\eps}>0$ only depending on $d$, $\Omega$, $\varrho$, $l$, $L$,  $P$  and $g$ such that
\begin{align}
	a \cH^{d-1}(P_y(r\epsp)) \leq 
	\epsp^\beta E^{\partial\Omega}_{\epsp}(y)
	\leq A \cH^{d-1}(P_y(R\epsp))
	\label{CN2bounds}
\end{align}
every $0<\epsp\leq \bar{\eps}$.
\end{thm}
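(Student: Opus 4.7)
The plan is to analyze the pointwise density $d^{\partial\Omega}_{\epsp,y}$: first identify where it can be nonzero, then bound it above and below by constants independent of $\epsp$. Integrating those bounds over the respective supports yields \eqref{CN2bounds}.

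\textbf{Support and upper bound.}
Because $g$ is strictly increasing and $P(t)\leq[t]^+$, the inner integrand is nonzero only when $|y(\tilde x)-y(x)|<\epsp|\tilde x-x|$. The local bi-Lipschitz assumption \eqref{biLiB} gives $|y(\tilde x)-y(x)|\geq l|\tilde x-x|$ whenever $|\tilde x-x|<\varrho$, so provided $\bar\eps\leq l$ the near-diagonal region contributes nothing. Hence any $x$ with $d^{\partial\Omega}_{\epsp,y}(x)>0$ admits some $\tilde x$ with $|x-\tilde x|\geq\varrho$ and $|y(x)-y(\tilde x)|<\epsp\,\diam(\Omega)$, placing $x\in P_y(R\epsp)$ for $R:=\diam(\Omega)$. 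To bound $d^{\partial\Omega}_{\epsp,y}(x)$ uniformly I estimate $P(\cdot)\leq g(\diam\Omega)$ pointwise and control the $\mathcal{H}^{d-1}$-measure of the relevant set of $\tilde x$: cover $\partial\Omega$ by finitely many relatively open pieces of Euclidean diameter $<\varrho$ (their number depending only on $\Omega$ and $\varrho$); on each piece $y$ is bi-Lipschitz with lower constant $l$, so the preimage of a ball of radius $\epsp\,\diam(\Omega)$ about $y(x)$ has $\mathcal{H}^{d-1}$-measure at most a constant times $(\epsp/l)^{d-1}$. Summing over the pieces gives $d^{\partial\Omega}_{\epsp,y}(x)\leq A$ independently of $x$ and $\epsp$, and integration over $P_y(R\epsp)$ yields the right-hand inequality.

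\textbf{Lower bound.}
Fix $x\in P_y(r\epsp)$ with $r>0$ to be chosen, and pick a witness $\tilde x_0\in\partial\Omega$ with $|x-\tilde x_0|>\varrho/2$ and $|y(x)-y(\tilde x_0)|\leq r\epsp$. For $\tilde x$ with $|\tilde x-\tilde x_0|<\delta\epsp$, where $\delta>0$ is to be chosen and $\epsp$ is small enough that $\delta\epsp<\varrho/4$, the upper bi-Lipschitz bound yields $|y(x)-y(\tilde x)|\leq(r+L\delta)\epsp$, and $|x-\tilde x|\geq\varrho/2-\delta\epsp\geq\varrho/4$. Choose $r$ and $\delta$ small enough (in terms of $g,\varrho,L$) that $g(r+L\delta)\leq g(\varrho/4)/2$; the argument of $P$ is then bounded below by $\tau:=g(\varrho/4)/2>0$ throughout this neighborhood of $\tilde x_0$. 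Since $P$ is continuous and strictly positive on $(0,\infty)$, it attains a positive minimum $p_0>0$ on the compact interval $[\tau,g(\diam\Omega)]$. The Lipschitz character of $\partial\Omega$ provides a uniform lower bound $\mathcal{H}^{d-1}(\partial\Omega\cap B(\tilde x_0,\delta\epsp))\geq c_\Omega(\delta\epsp)^{d-1}$, so $d^{\partial\Omega}_{\epsp,y}(x)\geq p_0 c_\Omega\delta^{d-1}=:a$ uniformly in $x\in P_y(r\epsp)$ and $\epsp\leq\bar\eps$. Integration over $P_y(r\epsp)$ gives the left-hand inequality.

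\textbf{Main obstacle.}
The most delicate step is the lower bound: one must simultaneously choose $r$ and $\delta$ small enough to produce a strictly positive gap in the argument of $P$ (overcoming the fact that $P$ is only required to be positive on $(0,\infty)$ and may vanish for small positive arguments) while retaining the factor $\delta^{d-1}$ in the measure lower bound. The chart-based preimage-measure estimate in the upper bound and the uniform Lipschitz-boundary lower bound on small boundary balls are standard for bounded Lipschitz domains, but the dependence of every constant and of $\bar\eps$ on exactly the parameters listed in the statement must be tracked carefully throughout; in particular $\bar\eps$ ends up determined by $l$, $\varrho$, $\delta$, and hence indirectly by $g$ and $L$ through the choice of $\delta$.
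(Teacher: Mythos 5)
Your proof is correct and follows essentially the same strategy as the paper's: identify that the integrand is supported where $\abs{y(\tilde x)-y(x)}<\epsp\abs{\tilde x-x}$, use the lower bi-Lipschitz bound to exclude the near-diagonal region for $\epsp\leq l$, derive a uniform upper bound on $d^{\partial\Omega}_{\epsp,y}$ via a chart/covering argument, and for the lower bound propagate a witness point to a small $\epsp$-neighborhood using the upper Lipschitz bound and the Lipschitz regularity of $\partial\Omega$. The only cosmetic difference is that the paper first upgrades the witness distance $\abs{x-\tilde x_0}>\varrho/2$ to $\geq\varrho$ via the lower bi-Lipschitz estimate before extending to the neighborhood, whereas you work directly from $>\varrho/2$; both variants yield the same conclusion with slightly different constants.
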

\end{minipage}
\begin{rem}
As a consequence of \eqref{biLiB},
\[
	P_y(0)=\{ x\in\partial\Omega \mid \# y^{-1}(y(x))>1\},
\] 
i.e., it is precisely the set where $y|_{\partial\Omega}$ fails to be injective.
\end{rem}
\begin{proof}[Proof of Theorem~\ref{thm:cnsoftboundary}]
 In essence, the proof is analogous to the one  of \cite[Theorem 3.3]{KroeVa19a}. 
 We here present it self-contained for the reader's convenience.  

The crucial quantity to study is the (rescaled) density of the penalty term given by
\begin{align*}
\bald
J_{y,\epsp}(x)&:=\epsp^{d-1} d^{\partial\Omega}_{\epsp,y}(x) \\
&= \int_{\partial\Omega} 
P\left(g(\abs{\tilde{x}-x})-g\Big(\frac{\abs{y(\tilde{x})-y(x)}}{\epsp}\Big)\right)
	\,\mbox{d}\cH^{d-1}(\tilde{x}).
\eald
\end{align*}
Due to the properties of $g$ and $P$ required in \eqref{sCN1b} and \eqref{sCN1c}, for given $x\in \partial\Omega$, $J_{y,\epsp}(x)$ gives a non-zero contribution if and only if there is a point $\tilde{x}$ 
such that
\begin{align}\label{ispenalized}
&\bald
\epsp\abs{\tilde{x}-x}>\abs{y(\tilde{x})-y(x)}.
\eald
\end{align}
(By \eqref{biLiB}, this automatically means there is a set of positive measure of such points.)
Due to the lower bound in \eqref{biLiB}, if $\epsp$ is small enough, \eqref{ispenalized} never happens locally with $\abs{\tilde{x}-x}<\varrho$.

\noindent{\bf Lower bound in \eqref{CN2bounds}:} 
Defining a small enough constant $r$ based on the constants appearing in \eqref{biLiB}, more precisely,
\[
  r:=\min\left\{\tfrac{\varrho}{4},1\right\},
\]
the lower bound in \eqref{CN2bounds} relies on the following observation:
If $x\in P_y(r\epsp)$ and $\tilde{x}_0$ is an admissible choice in the definition of $P_y(r\epsp)$, i.e.,
\begin{align}\label{Py-tildex}
	\text{$\abs{x-\tilde{x}_0}>\frac{\varrho}{2}$ and $\abs{y(x)-y(\tilde{x}_0)}\leq r\epsp$}, 
\end{align}
then \eqref{ispenalized} holds, $\abs{x-\tilde{x}_0}\geq \varrho$
and for $\tilde{x}=\tilde{x}_0$,
\begin{align}\label{lb-pendensity}
	g(\abs{\tilde{x}-x})-g\Big(\frac{\abs{y(\tilde{x})-y(x)}}{\epsp}\Big)\geq c_\varrho >0,
\end{align}
with $c_\varrho:=g\big(\varrho\big)-g\big(\frac{\varrho}{4}\big)$.
In fact, to obtain \eqref{lb-pendensity} it is enough if $r$ is replaced by the bigger constant $\frac{\varrho}{4}$ in \eqref{Py-tildex}.
Combining this with \eqref{biLiB}, we see that
for the smaller constant 
$\tilde{c}_\varrho:=g\big(\frac{3}{4}\varrho\big)-g\big(\frac{\varrho}{2}\big)$,
the set 
\[
	G(x)=G(x,\tilde{x}_0):=\mysetl{\tilde{x}\in \partial\Omega\cap B_{\frac{\varrho}{4}}(\tilde{x}_0)}{
	\text{\eqref{lb-pendensity} holds with $\tilde{c}_\varrho$}}
\]
contains 
all $\tilde{x}\in\partial\Omega$ with $\abs{\tilde{x}-\tilde{x}_0}<\frac{r}{L+1}\epsp$ 
(as long as $\frac{r}{L+1}\epsp\leq \varrho$).
Since $\partial\Omega$ is Lipschitz, this implies that $G(x)$ has a $(d-1)$-dimensional measure of the order of $\epsp^{d-1}$ (or more) for small enough $\epsp$, independently of $x$ and the choice of $\tilde{x}_0$. By \eqref{lb-pendensity} and \eqref{sCN1c}, we conclude that for all $x\in P_y(r\epsp)$, 
$\epsp^{-(d-1)}J_{y,\epsp}(x)$ is bounded from below by a fixed constant, which gives the lower bound in \eqref{CN2bounds}. 

 \noindent{\bf Upper bound in \eqref{CN2bounds}:} 
We exploit  that the integrand of $J_{y,\epsp}(x)$
is always bounded from above:
\[
	g(\abs{\tilde{x}-x})-g\Big(\frac{\abs{y(\tilde{x})-y(x)}}{\epsp}\Big)\leq g(\abs{\tilde{x}-x}) \leq g(\diam \Omega),
\]
 where $\diam \Omega:=\sup_{x_1,x_2\in \Omega} \abs{x_1-x_2}$. 
 
The integrand of $J_{y,\epsp}(x)$ vanishes if \eqref{ispenalized} does not hold,
in particular if $\eps<l$ and $\abs{\tilde{x}-x}<\varrho$ due to lower bound in \eqref{biLiB}.
Hence, we obtain that
\begin{align}\label{Jy-ub}
\begin{aligned}
	&J_{y,\epsp}(x)\leq C_1 \cH^{d-1}(Q_y(\eps,x))
\end{aligned}
\end{align}
with
\begin{align*}
\begin{aligned}
Q_y(\eps,x):=\mysetr{\tilde{x}\in\partial\Omega}{\abs{y(\tilde{x})-y(x)}< \eps \diam \Omega~~\text{and}~~
	\abs{\tilde{x}-x}\geq \frac{\varrho}{2}}
\end{aligned}
\end{align*}
and the constant $C_1:=\sup_{0\leq t\leq g(\diam \Omega)} P(t)$.
As another consequence of the lower bound in \eqref{biLiB},
we know that for each $\tilde{x}\in Q_y(\eps,\varrho,x)$,
\[
	z\notin Q_y(\eps,x)\quad\text{for all $z\in \partial\Omega$ with}~~\varrho\geq \abs{z-\tilde{x}}\geq \frac{2 \diam\Omega}{l} \eps. 
\]
This means that $Q_y(\epsp,x)$ can be covered by a finite number (depending only on $\varrho$ and $\Omega$) of balls of radius $\frac{2 \diam\Omega}{l} \eps$. Consequently,
\[
	\cH^{d-1}\big(Q_y\big(\epsp,x\big)\big)\leq C_2 \epsp^{d-1},
\]
where the constant $C_2=C_2(\varrho,l,\Omega)>0$ also compensates the fact the $\partial\Omega$ locally is Lipschitz but not necessarily flat.
With $R:=\diam \Omega$ and $A:=C_1C_2>0$, we conclude that
\[
\begin{aligned}
	A\cH^{d-1}(P_y(R\epsp)) &=
	\int_{\mysetr{x\in \partial\Omega}{Q_y(\epsp,x)\neq \emptyset}} C_1C_2 \,\mbox{d}\cH^{d-1}(x) \\
	&\geq \int_{\partial\Omega} \frac{C_1}{\epsp^{d-1}}\cH^{d-1}\big(Q_y(\epsp,x)\big) \, \mbox{d}\cH^{d-1}(x).
\end{aligned}
\]
Together with \eqref{Jy-ub}, this yields the upper bound in \eqref{CN2bounds}. 
\end{proof}
The lower bound in \eqref{CN2bounds} immediately implies that if a deformation $y$ has bounded energy including the penalty term for all small $\epsp>0$, then
$\cH^{d-1}(P_y(0))=0$. With a more refined argument, we can even obtain that $P_y(0)=\emptyset$, i.e., full invertibility of $y|_{\partial\Omega}$, as long as $\beta>d-1$:
\begin{cor}[Boundary invertibility for finite penalization]\label{cor:boundaryinvert}
Suppose that the assumptions of Theorem~\ref{thm:cnsoftboundary} hold and let $C>0$.
If $\beta>d-1$ in \eqref{sCN2},
then there exists a constant $0<\tilde{\eps}\leq \bar\eps$ which only depends on $\beta$, $C$, $d$, $\Omega$, $\varrho$, $l$, $L$ and $g$,
such that for all $\epsp<\tilde{\eps}$ and all $y\in C^{0}(\partial\Omega;\RR^d)$ satisfying \eqref{biLiB}, 
\begin{align}\label{yECN-bound}
	E^{\partial\Omega}_{\epsp}(y)\leq C\quad\text{implies that $y|_{\partial\Omega}$ is injective.}
\end{align}
\end{cor}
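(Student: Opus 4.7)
The plan is to argue by contradiction, combining the lower bound in \eqref{CN2bounds} with a quantitative estimate showing that a single non-injectivity point on $\partial\Omega$ forces $P_y(r\epsp)$ to contain a whole surface ball of radius of order $\epsp$.

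First, I would exploit the local bi-Lipschitz property \eqref{biLiB} to observe that if $y(x_1)=y(x_2)$ with $x_1\neq x_2$, then necessarily $\abs{x_1-x_2}\geq \varrho$, since the lower bound $l\abs{x_1-x_2}\leq \abs{y(x_1)-y(x_2)}=0$ would otherwise force $x_1=x_2$. Next, suppose for contradiction that $y|_{\partial\Omega}$ is not injective and pick such a pair $x_1,x_2$. Using the upper bound in \eqref{biLiB} applied to the pair $(x,x_1)$ for $x\in\partial\Omega$ with $\abs{x-x_1}<\min\{\varrho,r\epsp/L\}$, I get $\abs{y(x)-y(x_2)}=\abs{y(x)-y(x_1)}\leq L\abs{x-x_1}\leq r\epsp$, while $\abs{x-x_2}\geq \abs{x_1-x_2}-\abs{x-x_1}>\varrho/2$. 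Thus every such $x$ lies in $P_y(r\epsp)$ (with $\tilde x_0=x_2$ playing the role of the witness point in the definition of $P_y$).

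Since $\partial\Omega$ is a bounded Lipschitz hypersurface, a surface ball of radius $\rho_0:=\min\{\varrho,r\epsp/L\}$ around $x_1$ has $\cH^{d-1}$-measure at least $c_\Omega \rho_0^{d-1}$ for some constant $c_\Omega>0$ and all sufficiently small $\rho_0$. For $\epsp$ small enough that $r\epsp/L\leq \varrho$, this gives
\begin{align*}
\cH^{d-1}(P_y(r\epsp))\geq c_\Omega \Big(\frac{r}{L}\Big)^{d-1}\epsp^{d-1}.
\end{align*}
Inserting this into the lower bound of \eqref{CN2bounds} and using the hypothesis $E^{\partial\Omega}_{\epsp}(y)\leq C$ yields
\begin{align*}
a\, c_\Omega \Big(\frac{r}{L}\Big)^{d-1}\epsp^{d-1}\leq \epsp^{\beta}E^{\partial\Omega}_{\epsp}(y)\leq C\epsp^{\beta},
\end{align*}
i.e., a universal positive constant bounded by $C' \epsp^{\beta-(d-1)}$. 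Since $\beta>d-1$, the right hand side tends to zero as $\epsp\to 0$, so this inequality fails once $\epsp$ is smaller than some explicit threshold $\tilde\eps$ depending only on the listed constants. This contradiction forces injectivity of $y|_{\partial\Omega}$ for all $\epsp<\tilde\eps$.

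No real obstacle is expected: the only mildly technical point is the lower bound on the $\cH^{d-1}$-measure of a small surface ball, which is a standard consequence of the Lipschitz regularity of $\partial\Omega$ (the same kind of estimate is already invoked implicitly in the proof of Theorem~\ref{thm:cnsoftboundary} when bounding $\cH^{d-1}(G(x))$ from below). The exponent condition $\beta>d-1$ is used precisely to convert the uniform lower bound of order $\epsp^{d-1}$ into a contradiction with the upper bound of order $\epsp^\beta$.
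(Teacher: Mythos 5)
Your argument is correct and follows the same strategy as the paper's own proof (find that non-injectivity forces $P_y(r\epsp)$ to contain a surface ball of radius $\sim\epsp$ around the non-injectivity point, hence $\cH^{d-1}(P_y(r\epsp))\gtrsim\epsp^{d-1}$, then contradict the lower bound of \eqref{CN2bounds} using $\beta>d-1$); your version simply writes out the explicit radius $\min\{\varrho/2,\,r\epsp/L\}$ and constants that the paper leaves implicit. One small slip: you take the ball radius to be $\min\{\varrho, r\epsp/L\}$, but to guarantee $\abs{x-x_2}>\varrho/2$ from $\abs{x_1-x_2}\geq\varrho$ by the triangle inequality you actually need $\abs{x-x_1}<\varrho/2$, so the cap should be $\varrho/2$ rather than $\varrho$; this is harmless since for small $\epsp$ the binding term is $r\epsp/L$ anyway, but the stated smallness condition ``$r\epsp/L\leq\varrho$'' should accordingly read ``$r\epsp/L\leq\varrho/2$''.
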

\begin{proof}[Proof]
Suppose that $y|_{\partial\Omega}$ is not injective. 
Then there is a pair of points $x_1,x_2\in \partial\Omega$ with $x_1\neq x_2$ and $y(x_1)=y(x_2)$.
Since $y$ is locally bi-Lipschitz by \eqref{biLiB}, there are neighborhoods of each of these two points in $\partial\Omega$ which are fully contained in $P_y(r\epsp)$, 
and each neighborhood locally covers $\partial\Omega$ around $x_i$ for a radius of the order of $\epsp$ (or more).
The surface measure of such a neighborhood is therefore of the order of $\epsp^{d-1}$ (or more). Hence, up to a positive multiplicative constant,
$\cH^{d-1}(P_y(r\epsp))\geq \epsp^{d-1}$. Due to the lower bound in \eqref{CN2bounds}, this contradicts our given energy bound, the premise of \eqref{yECN-bound}.
\end{proof}
\begin{rem}\label{rem:CNboundary}
Unlike the bulk variant \eqref{sCN1}, \eqref{sCN2} cannot be expected to correctly reproduce 
global invertibility as a limiting condition as $\epsp\to 0$ for all $\beta>0$. Moreover, 
on the boundary, it is not helpful to have global invertibility merely a.e.. 
Roughly speaking, this is related to the same issue which prevents a straightforward modification of the Ciarlet-Ne\v{c}as condition \eqref{ciarletnecas}
to surface integrals. Such a formally analogous variant of \eqref{ciarletnecas} on the boundary is given by
\begin{align} \label{ciarletnecasboundary}
\int_{\partial\Omega} \big[\det\big(n\otimes n+(\nabla_t y)^\top \nabla_t y\big)\big]^{\frac{1}{2}}\,\mbox{d}\cH^{d-1}(x) = \cH^{d-1}\big(y(\partial\Omega)\big),
\end{align}
where $\nabla_t$ denotes the tangential gradient, i.e., $\nabla_t y=(\nabla y) P_t$ with the orthogonal projection $P_t=\idmatrix-n\otimes n$ onto the tangent space 
orthogonal to the outer normal $n$. 
However, \eqref{ciarletnecasboundary} only rules out self-intersections of the boundary on $(d-1)$-dimensional sets, which is practically useless because
generic self-intersection on the boundary happens on sets of dimension $d-2$, sets of measure zero with respect to the surface measure $\cH^{d-1}$.
\end{rem}

Theorem~\ref{thm:cnsoftboundary} also shows that the penalty term eventually vanishes for deformations without self-contact on the boundary:
\begin{cor}\label{cor:whenECNvanishes}
In the situation of Theorem~\ref{thm:cnsoftboundary}, let $\epsp\leq \bar{\eps}$ and suppose in addition that $y$ is more than a distance of $R\epsp$ away from any non-local self-contact, i.e.,
\begin{align}\label{corwECNv-1}
	\abs{y(x_1)-y(x_2)}> R\epsp\quad\text{for all}~~x_1,x_2\in\partial\Omega~~\text{with}~~\abs{x_1-x_2}> \tfrac{\varrho}{2}
\end{align}
with $R=\operatorname{diam}\Omega$.
Then $E^{\partial\Omega}_{\epsp}(y)=0$. 
\end{cor}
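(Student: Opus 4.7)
The plan is to derive the conclusion as an essentially immediate consequence of the upper bound already established in Theorem~\ref{thm:cnsoftboundary}. The key observation to make is that the hypothesis \eqref{corwECNv-1} is precisely tailored to force emptiness of the set $P_y(R\epsp)$ with $R=\diam\Omega$: indeed, if \eqref{corwECNv-1} holds, then for every $x\in\partial\Omega$ there is no $\tilde x\in\partial\Omega$ simultaneously satisfying $\abs{x-\tilde x}>\varrho/2$ and $\abs{y(x)-y(\tilde x)}\leq R\epsp$, so no $x$ qualifies for membership in $P_y(R\epsp)$.

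Given this, I would plug $\cH^{d-1}(P_y(R\epsp))=0$ into the right-hand side of \eqref{CN2bounds}, which yields $\epsp^\beta E^{\partial\Omega}_{\epsp}(y)\leq 0$, and since $\epsp>0$ and $E^{\partial\Omega}_{\epsp}$ is nonnegative by construction (because $P\geq 0$ by \eqref{sCN1c}), I conclude $E^{\partial\Omega}_{\epsp}(y)=0$.

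As an alternative, slightly more self-contained route, one could re-examine the density $J_{y,\epsp}(x)$ and argue it vanishes pointwise: for $\tilde x$ close to $x$ (i.e., $\abs{\tilde x-x}<\varrho$) the lower bound in \eqref{biLiB} together with smallness of $\epsp$ rules out \eqref{ispenalized}, while for $\tilde x$ far from $x$ (i.e., $\abs{\tilde x-x}\geq\varrho/2>0$) the hypothesis \eqref{corwECNv-1} gives $\abs{y(\tilde x)-y(x)}>R\epsp\geq \epsp\diam\Omega\geq \epsp\abs{\tilde x-x}$, again excluding \eqref{ispenalized}. Either way, the inner integrand in \eqref{sCN2} is identically zero, so $E^{\partial\Omega}_{\epsp}(y)=0$.

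There is no real obstacle in this argument; the only point to be careful about is matching the constant $R=\diam\Omega$ in the statement with the choice of $R$ made in the upper-bound part of the proof of Theorem~\ref{thm:cnsoftboundary}, which is precisely how it was set there. Thus the corollary is a short corollary of the already proven upper bound in \eqref{CN2bounds}.
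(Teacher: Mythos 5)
Your first route is exactly the paper's proof: observe that \eqref{corwECNv-1} forces $P_y(R\epsp)=\emptyset$, then apply the upper bound in \eqref{CN2bounds} together with nonnegativity of $E^{\partial\Omega}_{\epsp}$. The alternative pointwise argument you sketch is also sound, but unnecessary once the upper bound in Theorem~\ref{thm:cnsoftboundary} is available.
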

\begin{proof}
This is a direct consequence of the upper bound in \eqref{CN2bounds} and the definition of $P_y(R\epsp)$: \eqref{corwECNv-1} implies that $P_y(R\epsp)=\emptyset$.
\end{proof}


\subsection{On the assumed local bi-Lipschitz property}

For the preceding analysis, deformations are only admissible if they satisfy the local bi-Lipschitz property \eqref{biLiB} on the boundary. While this does restrict the applicability of our results, \eqref{biLiB} can be justified in certain realistic scenarios, in fact even the more restrictive \eqref{biLi} on all of $\Omega$.

First, as explained below, one can study linear elastic models while imposing \eqref{idangleconstraint} or \eqref{biLi} as a constraint, since both are stable under pointwise convergence of $y$. The drawback is that solutions obtained in such a framework are clearly not physical if the constraint is active. However, our theory does not require any particular choice for the local bi-Lipschitz constants $l,L$, 
so that we can admit any fixed deformation $y$ which is not too far from the identity in $W^{1,\infty}$. 
These are the deformations for which a small strain (linear elastic) model is most likely to provide decent approximations. 
Numerically, both can be checked a posteriori relatively easily as long as boundary self-contact is prevented, see Remark~\ref{rem:biLi-apost}. Still, it would be nice to weaken  \eqref{biLiB}  if possible, because Linear Elasticity can be rigorously justified as a Gamma-limit of suitable nonlinear models where the deformation gradients are required to be close to the identity matrix only in, say, $L^2$  \cite{DalNePe02,AgDalDeS12a}  (not in $L^\infty$ as suggested by \eqref{idangleconstraint}).  
For further related results and a discussion of the case without Dirichlet boundary conditions see Remark~\ref{rem:puretraction}. 

Secondly, for nonlinear elastic energies resisting extreme compression, another justification of \eqref{biLi} can be found if suitable higher order terms are included in the nonlinear elastic energy (non-simple materials): A version of the Inverse Mapping Theorem, summarized in Lemma~\ref{lem:biLi} below, then guarantees \eqref{biLi}. For this argument, the extra regularity of deformations provided by the second gradient term in $E_{\epsp,\sigma}$ in Subsection~\ref{ssec:NLEnonsimple} below, together with
 suitable properties of the nonlinear elastic energy density, cf.~\eqref{W1} and \eqref{pqsd}, 
guarantees that in any set of states with bounded energy, $J:=\det\nabla y$ is uniformly positive, 
bounded away from zero  by a result of \cite{HeaKroe09a}:
\begin{lem}[cf.~Lemma 4.1 in \cite{HeaKroe09a}]\label{lem:HK}
Let $\Omega\subset \RR^d$ be a bounded Lipschitz domain, $\alpha>0$ and $q\geq d/\alpha$.
Then for every $C_1,C_2>0$ there exists a constant $\delta=\delta(C_1,C_2,\Omega,\alpha,q,d)>0$ such that
\[
	\int_\Omega J^{-q} \, \dx \leq C_1 \quad\text{implies that}\quad J\geq \delta>0~~~\text{on $\Omega$}
\]
for any $J\in C^\alpha(\Omega)$ with $\norm{J}_{C^\alpha(\Omega)}\leq C_2$ and $J>0$ a.e.~in $\Omega$.
\end{lem}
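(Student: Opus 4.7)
The plan is to argue by contradiction: if $J$ has small infimum, the H\"older control forces $J^{-q}$ to be large on a non-negligible piece of $\Omega$, violating the $L^q$ bound. Assume to the contrary that there exists a sequence $(J_n)_{n\in\NN}\subset C^\alpha(\overline\Omega)$ with $\|J_n\|_{C^\alpha(\overline\Omega)}\leq C_2$, $J_n>0$ a.e., $\int_\Omega J_n^{-q}\dx\leq C_1$, yet $\delta_n:=\inf_\Omega J_n\to 0$. By continuity of $J_n$ on $\overline\Omega$, pick $x_n\in\overline\Omega$ with $J_n(x_n)=\delta_n$. The H\"older estimate yields
\[
J_n(x)\leq \delta_n+C_2|x-x_n|^\alpha\qquad\text{for every }x\in\overline\Omega,
\]
hence the pointwise lower bound $J_n(x)^{-q}\geq (\delta_n+C_2|x-x_n|^\alpha)^{-q}$.

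Next I would integrate this lower bound over a definite fraction of $\Omega$ near $x_n$. Since $\Omega$ is a bounded Lipschitz domain, it satisfies a uniform interior cone condition: there exist $r_0,c_\Omega>0$ depending only on $\Omega$ such that for every $x_n\in\overline\Omega$ one finds a truncated cone $K_n\subset \Omega\cap B_{r_0}(x_n)$ with apex $x_n$ whose solid-angle fraction is at least $c_\Omega$. Using polar coordinates around $x_n$,
\[
C_1\geq \int_\Omega J_n^{-q}\dx \geq c_\Omega\int_0^{r_0}\bigl(\delta_n+C_2 s^\alpha\bigr)^{-q} s^{d-1}\ds.
\]
The substitution $s=(\delta_n/C_2)^{1/\alpha}t$ rewrites the last integral as
\[
c_\Omega\,C_2^{-d/\alpha}\,\delta_n^{\,d/\alpha-q}\int_0^{T_n}(1+t^\alpha)^{-q}\,t^{d-1}\dt,\qquad T_n:=r_0\bigl(C_2/\delta_n\bigr)^{1/\alpha},
\]
which cleanly isolates the $\delta_n$-dependence.

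Two cases exhaust $q\geq d/\alpha$. If $q>d/\alpha$, the exponent $d/\alpha-q$ is negative, so $\delta_n^{d/\alpha-q}\to\infty$, while the $t$-integral converges to a finite positive constant because the integrand decays at infinity like $t^{d-1-q\alpha}$ with $d-1-q\alpha<-1$. If $q=d/\alpha$, the prefactor equals $1$ but the integrand only decays like $t^{-1}$ at infinity, so $\int_0^{T_n}(1+t^\alpha)^{-q}t^{d-1}\dt$ grows logarithmically with $T_n\to\infty$. In both regimes the right-hand side tends to $+\infty$, contradicting the bound $C_1$. Tracing the inequality backwards then extracts an explicit threshold $\delta=\delta(C_1,C_2,\Omega,\alpha,q,d)>0$ below which $\inf_\Omega J$ cannot descend, which is the claim.

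The main obstacle is the interplay between the boundary and the critical exponent. When $x_n$ lies on $\partial\Omega$ the lower bound $|\Omega\cap B_r(x_n)|\gtrsim r^d$ is not automatic, but the Lipschitz assumption provides a uniform interior cone of definite opening, which is exactly what is needed so that the radial integral has range up to $T_n\to\infty$ independent of $n$ (and not just a range shrinking with $\delta_n$). The borderline exponent $q=d/\alpha$ then compels the argument to use the divergence of the tail integral on $[0,\infty)$ rather than mere blow-up of the prefactor; keeping a fixed radius $r_0$ (rather than a shrinking ball $r_n\sim \delta_n^{1/\alpha}$) in the preceding step is what preserves this divergence and allows both cases to be handled uniformly.
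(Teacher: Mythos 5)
The paper does not prove this lemma; it cites it directly from Healey--Kr\"omer (Lemma~4.1 there), so there is no in-paper proof to compare against. Your argument is correct and is essentially the standard proof of this fact: the H\"older bound $J_n(x)\le \delta_n + C_2|x-x_n|^\alpha$ around a near-infimum point, the uniform interior cone condition for a Lipschitz domain (which is what handles $x_n\in\partial\Omega$), the radial integration, and the rescaling $s=(\delta_n/C_2)^{1/\alpha}t$ are exactly the ingredients the cited lemma rests on. Your case split at $q=d/\alpha$ (prefactor blow-up versus logarithmic divergence of the truncated tail integral) is handled correctly, and your closing remark about the fixed radius $r_0$ being what makes the critical exponent go through is the right observation. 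Two minor points worth tightening if you write this out in full: (a) since the hypothesis only gives $J_n\in C^\alpha(\Omega)$, you should say explicitly that a bounded $C^\alpha$ norm on the bounded domain forces $J_n$ to extend continuously (indeed H\"older) to $\overline\Omega$, which is what lets you attain the infimum at some $x_n\in\overline\Omega$; (b) the contradiction framing is fine, but as you note it is immediately convertible into a quantitative threshold, and stating the explicit $\delta$ in terms of $C_1,C_2,c_\Omega,r_0,\alpha,q,d$ makes the dependence asserted in the lemma visible rather than implicit.
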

In particular, if $y\in W^{2,s}$ with $s>d$, we have
that $y\in C^{1,\alpha}$ by embedding, where $\alpha=(s-d)/s$. Consequently, 
$J:=\det \nabla y\in C^\alpha$, and we can apply Lemma~\ref{lem:HK} as long as 
$q\geq d/\alpha=sd/(s-d)$, which is assumed in \eqref{pqsd}.
In this way, \eqref{yconstants}  in the following lemma can be  justified. 
\begin{lem}[Lemma 3.6 in \cite{KroeVa19a}]\label{lem:biLi}
Let $\Omega\subset \RR^d$ be a bounded Lipschitz domain with local Lipschitz constants bounded by a fixed $L_\Omega>0$, let $\alpha,\delta,M_1,M_2>0$
and let $y\in C^{1,\alpha}(\Omega;\RR^d)$ such that 
\begin{align}\label{yconstants}
	\text{$\det \nabla y\geq \delta>0$}
	~~~\text{and}~~~
	\text{$\abs{\nabla y}\leq M_1$}~~~\text{on $\Omega$}~~~\text{and}
~~~\norm{\nabla y}_{C^\alpha(\Omega)}\leq M_2.
\end{align}
Then there exists a $\varrho>0$ which only depends on $\delta,M_1,M_2,\alpha$ and $\Omega$ 
such that for every $\bar{x}\in \RR^d$, $y$ is injective on 
$\Omega(\bar{x},\frac{\varrho}{2}):=B_{\varrho/2}(\bar{x})\cap \Omega$. Moreover, 
$y$ is bi-Lipschitz on $\Omega(\bar{x},\frac{\varrho}{2})$ for any $\bar{x}$,
i.e., $y$ satisfies \eqref{biLi} on $\Omega$ (and thus on $\overline{\Omega}$ by continuity),
where the constants $l,L>0$ can be explicitly chosen as
\[
    l:=\frac{1}{2} \frac{\delta}{M_1^{d-1}},\quad L:=M_1 \sqrt{L_\Omega^2+1}.
\]
\end{lem}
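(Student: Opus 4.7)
My plan is to combine a quantitative Inverse Function Theorem at the single point $\bar{x}$ with the Hölder continuity of $\nabla y$, so that the linearisation $A:=\nabla y(\bar{x})$ controls $y$ on a small neighbourhood of $\bar{x}$ up to a small perturbation. The first ingredient is the elementary linear-algebra observation that $|\nabla y|\leq M_1$ and $\det\nabla y\geq \delta$ imply
\[
	|Av|\geq \frac{\delta}{M_1^{d-1}}\,|v|\quad\text{for all $v\in\RR^d$,}
\]
since $A^{-1}=\cof(A)^\top/\det A$ and the singular values of $\cof(A)$ are products of $d-1$ singular values of $A$, so $\|\cof(A)\|_{\mathrm{op}}\leq M_1^{d-1}$. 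The second ingredient is the pointwise estimate $|\nabla y(x)-A|\leq M_2\,|x-\bar{x}|^\alpha$, which is immediate from $\|\nabla y\|_{C^\alpha}\leq M_2$.

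I would then pick two points $x_1,x_2\in \Omega(\bar{x},\varrho/2)$ and a $C^1$-path $\gamma\subset\Omega$ joining $x_2$ to $x_1$, and compare
\[
	y(x_1)-y(x_2)=\int_0^1 \nabla y(\gamma(t))\,\gamma'(t)\,dt,\qquad
	A(x_1-x_2)=\int_0^1 A\,\gamma'(t)\,dt,
\]
so that
\[
	|y(x_1)-y(x_2)-A(x_1-x_2)|\leq M_2\,\bigl(\sup_{t}|\gamma(t)-\bar{x}|\bigr)^\alpha\,\ell(\gamma).
\]
The upper bound follows directly from $|y(x_1)-y(x_2)|\leq M_1\,\ell(\gamma)$, while the reverse triangle inequality combined with $|A(x_1-x_2)|\geq \delta M_1^{-(d-1)}\,|x_1-x_2|$ will yield
\[
	|y(x_1)-y(x_2)|\geq \Bigl(\frac{\delta}{M_1^{d-1}}-M_2\,\bigl(\sup_{t}|\gamma(t)-\bar{x}|\bigr)^\alpha\,\tfrac{\ell(\gamma)}{|x_1-x_2|}\Bigr)|x_1-x_2|.
\]

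To close the argument I invoke a standard property of bounded Lipschitz domains: for $\varrho$ sufficiently small (only depending on $\Omega$), any $x_1,x_2\in\Omega(\bar{x},\varrho/2)$ can be joined by a rectifiable curve $\gamma\subset\Omega$ with $\ell(\gamma)\leq \sqrt{1+L_\Omega^2}\,|x_1-x_2|$ and $\gamma\subset B_{C\varrho}(\bar{x})$ for a constant $C=C(L_\Omega)$; such a path is obtained via a local boundary chart that straightens $\partial\Omega$ into a hyperplane, drawing the segment between the preimages of $x_1,x_2$ in the straightened coordinates and transporting it back. With this path, the upper bound gives $L=M_1\sqrt{1+L_\Omega^2}$, and after shrinking $\varrho$ once more so that $M_2(C\varrho)^\alpha\sqrt{1+L_\Omega^2}\leq \delta/(2M_1^{d-1})$ the lower bound yields $l=\delta/(2M_1^{d-1})$; injectivity on $\Omega(\bar{x},\varrho/2)$ is then automatic from the strict positivity of $l$.

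The main obstacle I anticipate is making the geometric path-length statement rigorous uniformly in $\bar{x}$: the constant $\sqrt{1+L_\Omega^2}$ is only achievable if the chart covering the relevant piece of $\partial\Omega$ is suitably aligned, and the smallness threshold on $\varrho$ must be chosen so that both $x_1$ and $x_2$ lie in the same boundary chart and the resulting path is contained in $\Omega$ (in particular not crossing a sharp dent of $\partial\Omega$). Once this geometric preliminary is in place, the remainder is a bookkeeping exercise: $\varrho$ only has to be small enough that the Hölder-perturbation term is dominated by $\delta/(2M_1^{d-1})$.
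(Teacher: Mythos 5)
Your overall strategy is the right one: this is indeed a quantitative Inverse Function Theorem argument, and it is the approach the paper itself alludes to (it calls the lemma ``a version of the Inverse Mapping Theorem'' and cites the proof from the earlier work rather than reproducing it). The linear-algebra step (singular values of $\cof A$ give $|A v|\geq \delta M_1^{-(d-1)}|v|$), the $C^\alpha$-perturbation estimate along an interior path, and the absorption of the H\"older error into the gap between $\delta M_1^{-(d-1)}$ and $\tfrac12\delta M_1^{-(d-1)}$ are all correct and do produce the stated $l$ and $L$. Two small points need repair.

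First, your $A=\nabla y(\bar x)$ is not defined when $\bar x\notin\overline{\Omega}$, which the statement allows; simply take $A=\nabla y(x_0)$ for any fixed $x_0\in\Omega(\bar x,\varrho/2)$ (if that set is nonempty — otherwise there is nothing to prove). The subsequent H\"older bound then reads $|\nabla y(\gamma(t))-A|\leq M_2|\gamma(t)-x_0|^\alpha$, and $|\gamma(t)-x_0|$ is still $O(\varrho)$ along your path.

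Second — and this is the more substantive issue — the path construction you sketch does \emph{not} deliver the constant $\sqrt{1+L_\Omega^2}$. Straightening the boundary by $\Phi(x)=(x',x_d-f(x'))$ and pulling back the segment between $\Phi(x_1)$ and $\Phi(x_2)$ yields a curve of length at most $\|D\Phi\|\,\|D\Phi^{-1}\|\,|x_1-x_2|$, i.e.\ controlled by the condition number of the chart; already for $d=2$ and $|\nabla f|=L_\Omega$ this condition number equals $\tfrac12\big(2+L_\Omega^2+L_\Omega\sqrt{L_\Omega^2+4}\big)$, which strictly exceeds $\sqrt{1+L_\Omega^2}$ for every $L_\Omega>0$. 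To hit $\sqrt{1+L_\Omega^2}$ (which is the sharp quasiconvexity constant for Lipschitz subgraphs) one should instead, in the graph coordinates $x=(x',x_d)$ with $\Omega$ locally $\{x_d<f(x')\}$, join $x_1=(x_1',t_1)$ and $x_2=(x_2',t_2)$ as follows: if $|t_1-t_2|>L_\Omega|x_1'-x_2'|$ the straight segment already lies in $\Omega$ (the graph is too shallow to catch it); otherwise take the ``V''-shaped path $\gamma(s)=\big((1-s)x_1'+sx_2',\,h(s)\big)$ with $h(s):=\min\{t_1-L_\Omega s|x_2'-x_1'|,\ t_2-L_\Omega(1-s)|x_2'-x_1'|\}$, which stays below $f$ by the Lipschitz property of $f$ and has length exactly $\sqrt{1+L_\Omega^2}\,|x_1'-x_2'|\leq\sqrt{1+L_\Omega^2}\,|x_1-x_2|$. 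With that replacement, and $\varrho$ small enough that $B_{\varrho/2}(\bar x)\cap\overline\Omega$ always lies inside a single such chart (uniformly in $\bar x$, which is possible since $\overline\Omega$ is compact), your bookkeeping goes through and yields precisely the claimed $l$ and $L$.
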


\subsection{From invertibility on the boundary to invertibility everywhere}
It is not difficult to construct examples of deformations exhibiting interior self-penetration while preserving invertibility at the boundary. 
However, by a result of \cite{Kroe20a} generalizing \cite{Ba81a} (see also \cite{HeMoCoOl21a} for another recent extension), 
for topologically simple domains no such example
is possible in the class of locally orientation preserving deformations. For the precise statement given in Theorem~\ref{thm:AIB-CN} below, we recall the concept of deformations that are \emph{approximately invertible on the boundary} used in \cite{Kroe20a}:
\begin{defn}[$\AIB$]\label{def:AIB}
Let $\Omega\subset \RR^d$ be open and bounded, and $y:\bar\Omega \to \RR^d$ with $y\in C(\partial\Omega;\RR^d)$. 
We say that $y$ is \emph{approximately invertible on the boundary}, or, shortly, $y\in \AIB$, if 
there exists a sequence $(\varphi_k)_{k\in\NN}\subset C(\partial\Omega;\RR^d)$ such that $\varphi_k:\partial\Omega\to \varphi_k(\partial\Omega)$ is invertible for each $k$ and
$\varphi_k\to y$ uniformly on $\partial\Omega$.
\end{defn}

\begin{thm}[{\cite[Corollary 6.5 and Remark 6.3]{Kroe20a}}]\label{thm:AIB-CN}
Let $\Omega\subset \RR^d$ be a bounded bounded Lipschitz domain 
such that $\RR^d\setminus \partial\Omega$ has exactly two connected components.
If $p>d$ and $y\in W^{1,p}(\Omega;\RR^d)\cap \AIB$ with $\det\nabla y>0$ a.e.~in $\Omega$, 
then $y$ satisfies the Ciarlet-Ne\v{c}as condition \eqref{ciarletnecas}.
\end{thm}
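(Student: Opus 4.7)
The plan is to reduce the Ciarlet-Ne\v{c}as condition to a pointwise bound on the Banach indicatrix $N(y,\Omega,z):=\#\bigl(y^{-1}(z)\cap\Omega\bigr)$ via the area formula, and then to derive that bound from Brouwer-degree theory applied to the $\AIB$ approximation. Since $p>d$, Morrey's embedding gives a H\"older-continuous representative of $y$ on $\overline{\Omega}$, and the area formula for Sobolev maps,
\[
	\int_\Omega \abs{\det\nabla y}\dx=\int_{\RR^d} N(y,\Omega,z)\dz,
\]
together with $\abs{y(\Omega)}=\int_{\RR^d}\Eins_{y(\Omega)}(z)\dz$ and $N(y,\Omega,\cdot)\geq 1$ a.e.\ on $y(\Omega)$ (a consequence of $\det\nabla y>0$ a.e.), reduces \eqref{ciarletnecas} to the a.e.\ bound $N(y,\Omega,z)\leq 1$.

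Since $y$ is continuous on $\overline{\Omega}$, the Brouwer degree $\Deg(y,\Omega,\cdot)$ is defined and locally constant on the open set $\RR^d\setminus y(\partial\Omega)$, and this set has full Lebesgue measure because $y(\partial\Omega)$ is the H\"older image of a $(d-1)$-rectifiable set. Ball's degree-counting identity, proved under $y\in W^{1,p}$ with $p>d$ and $\det\nabla y>0$ a.e.\ by combining the area formula with local invertibility of $y$ at its regular points (as in \cite{Ba81a}), then yields
\[
	N(y,\Omega,z)=\Deg(y,\Omega,z)\quad\text{for a.e.\ }z\in\RR^d,
\]
so it suffices to prove $\Deg(y,\Omega,z)\leq 1$ for a.e.\ $z\notin y(\partial\Omega)$.

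At this point I would invoke the $\AIB$ hypothesis. Pick $\varphi_k\in C(\partial\Omega;\RR^d)$, injective, with $\varphi_k\to y$ uniformly on $\partial\Omega$. The topological assumption that $\RR^d\setminus\partial\Omega$ has exactly two connected components is preserved under topological embeddings of the compactum $\partial\Omega$ into $\RR^d$ (via Alexander duality), so $\RR^d\setminus\varphi_k(\partial\Omega)$ also has exactly two components, one bounded and one unbounded. Standard degree theory for continuous extensions of such an injective boundary map then forces $\Deg(\varphi_k,\Omega,z)\in\{-1,0,+1\}$, taking the value $\pm 1$ on the bounded component and $0$ on the unbounded one. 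Since the Brouwer degree depends only on boundary values and is stable under uniform boundary convergence, for every $z\notin y(\partial\Omega)$ the equality $\Deg(y,\Omega,z)=\Deg(\varphi_k,\Omega,z)$ holds for all sufficiently large $k$, giving $\Deg(y,\Omega,z)\in\{-1,0,+1\}$ for a.e.\ $z$, as required.

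The main obstacle I anticipate is the degree-indicatrix identity for a general (not a~priori injective) $W^{1,p}$ map with $\det\nabla y>0$: this requires a careful application of the change-of-variables formula on the full-measure set of regular points of $y$, and uses positivity of the Jacobian to convert the signed degree count into the unsigned multiplicity count. A secondary, bookkeeping issue is collecting the various exceptional null sets --- the set $y(\partial\Omega)$, the null set on which the degree-indicatrix identity may fail, and the null set of $z$ at which the degree has not yet stabilized along $(\varphi_k)$ --- into a single common Lebesgue-negligible set, which is routine since each component is already null.
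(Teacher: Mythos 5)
The paper does not prove this theorem; it is quoted from \cite[Corollary 6.5 and Remark 6.3]{Kroe20a}, so there is no internal proof to compare against. That said, your outline matches the strategy that the paper's own subsequent remark attributes to the cited reference: reduce the Ciarlet--Ne\v{c}as inequality to $N(y,\Omega,\cdot)\le 1$ a.e.\ via the area formula, identify the indicatrix with the Brouwer degree using $\det\nabla y>0$ (Ball's identity), and then bound the degree using the $\AIB$ approximants together with the topological assumption on $\partial\Omega$.

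The real gap is the sentence ``Standard degree theory for continuous extensions of such an injective boundary map then forces $\Deg(\varphi_k,\Omega,z)\in\{-1,0,+1\}$.'' That bound is not a textbook fact; it is precisely the content of \cite[Thm.\ 4.2 and Appendix B]{Kroe20a}, and the paper's own remark singles this degree calculation out as the step where the topological hypothesis and orientation-preservation are actually used, and where the related argument in \cite{AigLi2013a} is ``not well explained.'' Alexander duality does give you that $\RR^d\setminus\varphi_k(\partial\Omega)$ has exactly two components, but that alone is not a degree bound. You additionally need that the inclusion $\varphi_k(\partial\Omega)\hookrightarrow\RR^d\setminus\{z\}$ induces an isomorphism on $H_{d-1}$, which rests on (a) $\partial\Omega$ being a connected, closed, orientable topological $(d-1)$-manifold so that $H_{d-1}(\partial\Omega)\cong\ZZ$, (b) perfectness of the Alexander linking pairing between $H_{d-1}$ of the embedded hypersurface and $\tilde H_0$ of its complement, and (c) vanishing of the degree on the unbounded component so that the generator links $\pm 1$ with any $z$ in the bounded one. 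None of this is automatic, and a Jordan--Schoenflies shortcut is unavailable for $d\ge 3$, so this step cannot be waved through as ``standard.'' A secondary, smaller issue: your justification that $|y(\partial\Omega)|=0$ as a H\"older image of a $(d-1)$-rectifiable set only works if the H\"older exponent $1-d/p$ exceeds $(d-1)/d$, i.e.\ $p>d^2$; for general $p>d$ it is cleaner to extend $y$ to a Sobolev map on a neighbourhood of $\overline\Omega$ and invoke Lusin's condition (N) on the null set $\partial\Omega$.
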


\begin{rem}\label{rem:AIB-CN}
If $\Omega\subset \RR^d$ is a bounded Lipschitz domain, $p>d$ and $(y_k)\subset W^{1,p}(\Omega;\RR^d)$ is a bounded sequence,
we automatically have that $y_k\in C(\bar\Omega;\RR^d)$ by embedding.
If, in addition, $\varphi_k:=y_k|_{\partial\Omega}$ is invertible for each $k$, for example due to a bound on the surface penalty term 
as in Corollary~\ref{cor:boundaryinvert}, then any limit $y$ of a subsequence weakly converging in $W^{1,p}$ belongs to $\AIB$.
\end{rem}

\begin{rem}
A variant of Theorem~\ref{thm:AIB-CN}, only valid for piecewise affine maps on a tetrahedral mesh which are invertible on the boundary, is given by
\cite[Thm.~3]{AigLi2013a}. An important step in its proof, namely the calculation of the global degree of the deformation map, is not well explained in \cite{AigLi2013a}, though. 
In the more general framework of Theorem~\ref{thm:AIB-CN}, the additional topological assumption on the reference configuration $\Omega$ ($M$ in \cite{AigLi2013a})
plays a crucial role at exactly this point (cf.~\cite[Thm.~4.2 and Appendix B]{Kroe20a}), 
in combination with the orientation preserving property of the full deformation (not just on the boundary). In particular, for the results of \cite{Kroe20a} it is not enough to have the deformed boundary $y(\partial\Omega)$ ($\Psi(\partial M)$ in \cite{AigLi2013a})
coincide with the boundary of some domain (in fact, the latter does not even imply that the reference configuration is connected). To what extent the restriction to piecewise affine maps 
can be exploited to avoid topological assumptions on the reference configuration is not clear to us from the proof in \cite{AigLi2013a}. 
\end{rem}

\section{Convergence of energies\label{sec:conv}}
We will show here that in the limit as the penalty parameter $\epsp$ converges to zero, 
elastic energies augmented with the surface penalty term will reproduce the original energy with 
global injectivity added as a constraint in form of the Ciarlet-Ne\v{c}as condition \eqref{ciarletnecas}. 
As mentioned before,  due to the assumptions of Theorem~\ref{thm:cnsoftboundary}, 
our analysis is limited to cases where deformations are known to be locally bi-Lipschitz. 
We discuss two such scenarios below,  first tailored to linear elastic energies 
subject to a local constraint (Theorem~\ref{thm:convergenceLE}), and then 
to an unconstrained nonlinear elastic energy containing a regularizing term of higher order
(Theorem~\ref{thm:convergence}).

\subsection{Auxiliary results: domain shrinking}
In  both scenarios, we will need  the following two technical lemmas which 
were also implicitly used in \cite{KroeVa19a} and play a similar role in \cite{KroeRei22Pa}. 
For the proofs of the theorems below, the case $\Gamma=\emptyset$ in Lemma~\ref{lem:shrink} and Lemma~\ref{lem:shrinkcont} is enough, 
but we prefer to present a slightly more general version here which is designed to handle additional Dirichlet boundary conditions on $\Gamma$, 
cf.~Remark~\ref{rem:DC}. 

\begin{lem}[domain shrinking]\label{lem:shrink}
Let $\Omega\subset \RR^d$ be a bounded Lipschitz domain, and $\Gamma\subset \partial\Omega$ a closed set. Then there exist a decreasing sequence of closed sets $\Gamma_j$ with $\Gamma\subset \Gamma_{j+1}\subset \Gamma_j \subset \partial\Omega$ 
such that $\bigcap_{j\in \NN} \Gamma_j=\Gamma$ and a sequence of
$C^\infty$-diffeomorphisms 
\[
    \Psi_j:\overline{\Omega}\to \Psi_j(\overline{\Omega})\subset\subset \Omega \cup \Gamma_j
		\quad\text{with}\quad \Psi_j|_\Gamma=\identity|_\Gamma
\]
such that as $j\to\infty$, $\Psi_j\to \identity$ in $C^m(\overline{\Omega};\RR^d)$ for all $m\in \NN$.
If $\Gamma=\emptyset$, we may choose $\Gamma_j=\emptyset$, too.
\end{lem}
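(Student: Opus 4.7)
My approach is to build each $\Psi_j$ as a small-amplitude motion of $\overline\Omega$ driven by an inward-pointing smooth vector field that vanishes in a prescribed neighborhood of $\Gamma$. The core of the argument is independent of $\Gamma$; the cutoff then localizes the motion away from $\Gamma$.

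First I would define the shrinking boundary collars $\Gamma_j:=\{x\in\partial\Omega :\operatorname{dist}(x,\Gamma)\leq 1/j\}$. These are closed in $\partial\Omega$, decreasing in $j$, and satisfy $\bigcap_{j\in\NN}\Gamma_j=\Gamma$ because $\Gamma$ is closed. Next I would invoke the standard construction of an inward transversal smooth vector field for bounded Lipschitz domains: there exist $V_0\in C^\infty_c(\RR^d;\RR^d)$ and constants $c_0,t_0>0$ with
\[
    \operatorname{dist}\bigl(x+tV_0(x),\,\partial\Omega\bigr)\geq c_0\,t
    \quad\text{for all } x\in\partial\Omega,\ t\in[0,t_0],
\]
whose flow maps $\overline\Omega$ into $\Omega$ for every $t\in(0,t_0]$. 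This is built from a finite cover of $\partial\Omega$ by Lipschitz graph charts, in each of which a constant inward direction is uniformly transversal, glued together with a smooth partition of unity. When $\Gamma=\emptyset$ this already settles the lemma: set $\Gamma_j=\emptyset$ and $\Psi_j:=\identity+t_j V_0$ (or, equivalently, the time-$t_j$ flow) with $t_j\downarrow 0$ sufficiently fast.

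For $\Gamma\neq\emptyset$, I would modulate $V_0$ by smooth cutoffs $\chi_j\in C^\infty(\RR^d;[0,1])$ with $\chi_j\equiv 0$ on $\{\operatorname{dist}(\cdot,\Gamma)\leq \tfrac{1}{3j}\}$, $\chi_j\equiv 1$ on $\{\operatorname{dist}(\cdot,\Gamma)\geq \tfrac{1}{2j}\}$ and $\|\chi_j\|_{C^m}\leq C_m j^m$, and take $\Psi_j$ to be the time-$t_j$ flow of $V_j:=\chi_j V_0$. Then $\Psi_j$ is smooth, fixes $\Gamma$ pointwise because $V_j\equiv 0$ in a neighborhood of $\Gamma$, and for $t_j$ small enough is a $C^\infty$-diffeomorphism of $\overline\Omega$ onto its image (the derivative is then a uniform small perturbation of the identity, and global injectivity follows from a Banach contraction argument, or simply from reversibility of the flow). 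The image inclusion $\Psi_j(\overline\Omega)\subset\subset\Omega\cup\Gamma_j$ is checked region-wise: on the frozen zone $\{\operatorname{dist}(\cdot,\Gamma)\leq\tfrac{1}{3j}\}$ the map is the identity and any boundary point there already lies in $\Gamma_j$; off this zone $\chi_j>0$, and the uniform transversality of $V_0$ pushes boundary points strictly into $\Omega$, while interior points remain in $\Omega$ by smallness of $t_j$.

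Finally, I would select $t_j$ by a diagonal procedure, for instance $t_j:=\bigl(j\,(1+\|\chi_j V_0\|_{C^j})\bigr)^{-1}$, so that $\|\Psi_j-\identity\|_{C^m}\leq 1/j$ for all $j\geq m$, yielding $\Psi_j\to\identity$ in $C^m(\overline\Omega;\RR^d)$ for every $m\in\NN$. I expect the main technical obstacle to be the construction and verification of the uniform quantitative transversality of $V_0$ on a merely Lipschitz boundary, where the outer normal exists only almost everywhere; the partition-of-unity argument across graph charts is classical but needs the transversality constants to survive the gluing. The remaining ingredients (cutoff estimates, inverse function theorem, and the diagonal choice of $t_j$) are then routine.
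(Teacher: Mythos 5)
Your proof is correct in spirit, but it takes a genuinely different route from the paper's. The paper does not introduce a global vector field or its flow; instead, in each Lipschitz graph chart $Q$ with direction $e$ it defines an explicit, essentially affine shrinking map $\hat\Psi_j(\cdot;Q)$ that rescales the $e$-coordinate toward the bottom of $Q$ by a factor $\frac{j-1}{j}$, interpolates this against the identity with a cutoff $\eta_{i(j)}$ vanishing near $\Gamma$, and then glues the local maps directly with a partition of unity. That construction is purely algebraic, with no ODE theory, and the diffeomorphism property comes simply from $\Psi_j$ being $C^1$-close to the identity for large $j$. Your flow formulation is conceptually cleaner (injectivity and smoothness of $\Psi_j = \Phi_{t_j}$ are immediate from reversibility of the flow, and the image constraint reduces to positive invariance plus transversality), but it pushes the real work into building the vector field $V_0$ with a uniform quantitative transversality bound and then verifying that the backward flow carries active boundary points strictly outside $\overline\Omega$ — you correctly flag this as the main obstacle, but note that the one-sided estimate $\operatorname{dist}(x+tV_0(x),\partial\Omega)\geq c_0 t$ as you stated it is not literally what you use: you need its two-sided version ($x-tV_0(x)$ exits $\overline\Omega$ at rate $c_0 t$), and you need to track the $O(t^2)$ flow correction whose coefficient grows with $\|\nabla\chi_j\|_\infty\sim j$, which your diagonal choice of $t_j$ does absorb but which deserves an explicit line. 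Both routes share the same underlying subtlety that is only implicit in the paper too: on chart overlaps one must ensure that the chart directions $-e_k$ are mutually transversal, i.e., the cover must be fine enough that adjacent directions stay within the convex inward cone determined by the local Lipschitz constants, so that the partition-of-unity average remains quantitatively transversal. Finally, both proofs use the same diagonal device of letting the cutoff's derivative bound grow slowly relative to $j$ (the paper chooses $i(j)$ so that $\|\nabla^m\eta_{i(j)}\|_\infty\lesssim j^{1/2}$, you shrink $t_j$ fast enough) to obtain $C^m$-convergence to the identity for every $m$.
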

\begin{lem}[composition with domain shrinking is continuous]\label{lem:shrinkcont}~\\
Let $\Omega\subset \RR^d$ be a bounded Lipschitz domain, $k\in \NN_0$, $1\leq p <\infty$ and $f\in W^{k,p}(\Omega;\RR^n)$.
With the maps $\Psi_j$ of Lemma~\ref{lem:shrink}, we then have that $f\circ \Psi_j\to f$ in $W^{k,p}(\Omega;\RR^n)$.
\end{lem}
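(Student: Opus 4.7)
The plan is a standard density-plus-chain-rule argument. First, by Meyers--Serrin (valid since $\Omega$ is Lipschitz), I would pick $f_m \in C^\infty(\overline{\Omega};\RR^n)$ with $f_m \to f$ in $W^{k,p}(\Omega;\RR^n)$, and split via the triangle inequality:
\[
\|f \circ \Psi_j - f\|_{W^{k,p}} \leq \|(f - f_m) \circ \Psi_j\|_{W^{k,p}} + \|f_m \circ \Psi_j - f_m\|_{W^{k,p}} + \|f_m - f\|_{W^{k,p}}.
\]
The goal is then to show the middle term tends to $0$ as $j \to \infty$ for each fixed $m$, while the first and third are bounded uniformly in $j$ by a constant times $\|f_m - f\|_{W^{k,p}}$; sending $m \to \infty$ will then finish the proof.

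For the middle (smooth) term, I would use the iterated chain rule to write $D^\alpha(f_m \circ \Psi_j)$ as a finite sum $\sum_{|\beta|\leq|\alpha|} \bigl((D^\beta f_m)\circ\Psi_j\bigr)\, Q_{\alpha,\beta}[\Psi_j]$, where $Q_{\alpha,\beta}[\Psi_j]$ is a universal polynomial in partial derivatives of $\Psi_j$ of orders $\leq |\alpha|$. Since $\Psi_j \to \identity$ in every $C^m(\overline{\Omega};\RR^d)$, each $Q_{\alpha,\beta}[\Psi_j]$ converges uniformly on $\overline{\Omega}$ to its value at the identity, which equals $\delta_{\alpha,\beta}$, and $(D^\beta f_m)\circ\Psi_j \to D^\beta f_m$ uniformly by uniform continuity of the derivatives of $f_m$ on the compact set $\overline{\Omega}$. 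Thus $D^\alpha(f_m \circ \Psi_j) \to D^\alpha f_m$ uniformly and hence in $L^p(\Omega)$ for every $|\alpha|\leq k$; the case $k=0$ reduces to the uniform-continuity argument alone.

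For the first and third term, the crucial observation is that for $j$ large, $\Psi_j$ is uniformly bi-Lipschitz with $|\det D\Psi_j|$ bounded above and away from zero, since $D\Psi_j \to \idmatrix$ uniformly. Applying the chain-rule identity above to $g := f - f_m$ (first for smooth approximations of $g$ and then extending by density, so that only $W^{k,p}$-norms of $g$ appear) gives
\[
\|D^\alpha(g\circ\Psi_j)\|_{L^p(\Omega)} \leq C\sum_{|\beta|\leq|\alpha|} \|(D^\beta g)\circ\Psi_j\|_{L^p(\Omega)},
\]
and the change of variables $y=\Psi_j(x)$, together with $\Psi_j(\Omega)\subset \overline{\Omega}$, gives $\|(D^\beta g)\circ\Psi_j\|_{L^p(\Omega)} \leq C'\|D^\beta g\|_{L^p(\Omega)}$ with $C,C'$ independent of $j$. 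Combining, $\limsup_{j\to\infty}\|f\circ\Psi_j - f\|_{W^{k,p}} \leq C''\|f_m - f\|_{W^{k,p}}$, and letting $m\to\infty$ concludes. The main point requiring care is the chain-rule identity for $W^{k,p}$ maps, needed to transfer the change-of-variable estimate from smooth to Sobolev $g$; both this and the uniform bi-Lipschitz constants for $\Psi_j$ are essentially automatic from $\Psi_j\to\identity$ in $C^m$ for all $m$.
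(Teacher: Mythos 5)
Your proof is correct, and it relies on the same circle of ideas as the paper's (density of globally smooth functions, chain rule, and a uniform-in-$j$ change-of-variables bound $\|h\circ\Psi_j\|_{L^p(\Omega)}\leq C\|h\|_{L^p(\Omega)}$), but the bookkeeping is organized differently. You decompose $f$ itself as $f_m+(f-f_m)$ and then handle arbitrary order $k$ in one stroke via the iterated (Fa\`a di Bruno) chain rule; the paper instead fixes $k=1$, decomposes the \emph{derivative} $\partial_n[f\circ\Psi_j-f]$ into a "chain-rule error" plus a "shift" term $(\partial_n f)\circ\Psi_j-\partial_n f$, settles the latter via a hands-on $L^p$-shift-continuity argument (extend by zero, mollify with a rate chosen slowly relative to $j$), and dispatches $k\geq 2$ by an inductive remark. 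Your route is somewhat cleaner for general $k$ and makes the uniform boundedness of the composition operator explicit (the paper needs the same bound implicitly, e.g.\ to control $\|(\nabla f)\circ\Psi_j\|_{L^p}$, but does not spell it out); the paper's route avoids invoking the full Fa\`a di Bruno formula and is more elementary per step. One small imprecision in your write-up: the Meyers--Serrin theorem gives density of $C^\infty(\Omega)\cap W^{k,p}(\Omega)$, \emph{not} of $C^\infty(\overline\Omega)$; for your argument you need approximants that are smooth up to the closed boundary (so that $D^\beta f_m$ are uniformly continuous on $\overline\Omega$), which for a Lipschitz domain follows from the segment property or an extension operator rather than from Meyers--Serrin. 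The fact is true, just misattributed.
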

\begin{proof}[Proof of Lemma~\ref{lem:shrink}]
If $\Omega$ is strictly star-shaped with respect to a point $x_0\in\Omega$ and $\Gamma=\emptyset$, one may take 
$\Psi_j(x):=x_0+\frac{j-1}{j}(x-x_0)$. 
For the general case, we combine local constructions near the boundary using a smooth decomposition of unity: 
First, choose functions
\[
	\text{$\eta_i \in C^\infty(\RR^d;[0,1])$ such that }\eta_i(x)=\begin{cases} 
		0 & \text{if $x\in \Gamma$}, \\
		1 & \text{if $\dist{x}{\Gamma}\geq \tfrac{1}{i}$},
	\end{cases}
\]
(with the understanding that $\eta_i\equiv 1$ if $\Gamma=\emptyset$)
and define 
\[
	\Gamma_i:=\mysetr{x\in\partial \Omega}{\eta_{n}(x)=0~~\text{for}~~n=1,\ldots,i}.
\]
If, locally in some open cube $Q$, $\Omega$ is a Lipschitz subgraph, i.e.,
\[	
	\Omega\cap Q=\{x\in Q\mid x\cdot e<f(x')\}~\text{and}~\partial\Omega\cap Q=\{x'+ef(x')\mid x\in Q\},
\]
where $e$ is a unit vector orthogonal to one of the faces of $Q$, $x':=x-(x\cdot e)e$ and $f$ is a real-valued Lipschitz function,
we define for $x\in Q$
\[
\begin{aligned}
    \hat{\Psi}_j(x;Q):=(1-\eta_{i(j)}(x))x+\eta_{i(j)}(x)\big(x'+\alpha_0 e+\frac{j-1}{j}(e\cdot x-\alpha_0)e\Big),
		&\\
		\text{where $\alpha_0:=\inf_{x\in Q}e\cdot x$ and $i(j)\in \NN$ will be fixed later}.&
\end{aligned}
\]
Notice that $\hat{\Psi}_j(\cdot;Q)$ keeps $\Gamma$ fixed, but as soon as 
as we are more than $\tfrac{1}{i(j)}$ away from $\Gamma$, it pulls the local boundary piece $\partial\Omega\cap Q$ \say{down} (in direction $-e$) into the original domain while leaving the \say{lower} face of $Q$ fixed. 
\begin{figure}\label{fig:domainshrinking}
\centering
\begin{tikzpicture}[scale=1.2,
declare function = {ssmooth(\z) = 2*\z*\z*(1-1/2*\z*\z/9)/9;},
declare function = {freq(\z) = (\z+4)/3*\z;},
declare function = {bvy(\x,\z) = 4+(1-\z)*(\oscila*sin(\freqa*freq(\x)+\offsa)+\oscilb*sin(\freqb*(\x)))
	+\z*(\oscila*sin(\freqa*freq(\x+1)+\offsa)+\oscilb*sin(\freqb*(\x+1)));}
]
\def\oscila{0.8}
\def\oscilb{1.2}
\def\freqa{50}
\def\freqb{19}
\def\offsa{40}
\def\shrink{4/5}

\draw [fill=white!60!white] (-4,6) -- (4,6) -- (4,0) -- (-4,0) -- cycle;
\draw [->,thick] (-2.5,4.7) --++ (0,0.3)  node [anchor = east] {$e$} --++ (0,0.3);
\node [anchor = north west] at (-4,6) (psi) {$Q$};
\foreach \x in {-3,...,-1}
    \fill [fill=gray!40!white] ({\x-1},0) -- ({\x-1},{bvy(\x-1,0)}) -- ({\x},{bvy(\x,0)}) -- ({\x},0) -- cycle;

\def\ssize{0.25}
\def\xsize{1}
\newcommand{\smacro}{\ssize, 2\ssize,\xsize}

\foreach \x in {-1,...,1}
    \foreach \s in {0.25,0.5,...,1}
        \fill [fill=gray!40!white] ({\x+\s-\ssize},0) 
        -- ({\x+\s-\ssize},{(1-(1-\shrink)*ssmooth(\x+\s-\ssize+1))*bvy(\x,\s-\ssize)}) 
        -- ({\x+\s},{(1-(1-\shrink)*ssmooth(\x+\s+1))*bvy(\x,\s)}) 
							-- ({\x+\s},0) -- cycle;

\foreach \x in {-1,...,1}
	\foreach \s in {\ssize,0.5,...,\xsize}
		\draw [color=black,thick]
            ({\x+\s-\ssize},{(1-(1-\shrink)*ssmooth(\x+\s-\ssize+\xsize))*bvy(\x,\s-\ssize)}) -- ({\x+\s},{(1-(1-\shrink)*ssmooth(\x+\s+\xsize))*bvy(\x,\s)});

\def\ssize{0.25}            
			\foreach \x in {3,...,4}
					\fill [fill=gray!40!white] ({\x-1},0) -- ({\x-1},{\shrink*bvy(\x-1,0)}) -- ({\x},{\shrink*bvy(\x,0)}) -- ({\x},0) -- cycle;
			\foreach \x in {3,...,4}
					\draw [color=black,thick] ({\x-1},{\shrink*bvy(\x-1,0)}) -- ({\x},{\shrink*bvy(\x,0)});

			\foreach \x in {-3,...,4}
		    	\draw[color=gray,thick] ({\x-1},{bvy(\x-1,0)}) -- ({\x},{bvy(\x,0)});
			\node at ({3},{bvy(3,0)+0.2}) (boundary) {$\partial\Omega$};
			\foreach \x in {-3,...,-1}
		    	\draw[color=blue,thick] ({\x-1},{bvy(\x-1,0)}) -- ({\x},{bvy(\x,0)});
			\node at ({-3},{bvy(-3,0)+0.4}) (bgamma) {{\color{blue}$\Gamma$}};
			
			\foreach \x in {-4,-3,-2}
				\foreach \s in {0.5,1}
					\draw[color=blue,dashed,thin] ({\x+\s},0) -- ({\x+\s},{bvy(\x,\s)});
			\foreach \x in {-1,0  ,...,3}
				\foreach \s in {0.5,1}
					\draw[color=gray,dashed,thin] ({\x+\s},0) -- ({\x+\s},{bvy(\x,\s)});
					
			\node at (1,2) (psi) {$\hat\Psi_j(\Omega;Q)$};
\end{tikzpicture}
\caption{Locally shrinking $\Omega$ with factor $\frac{j-1}{j}=\frac{4}{5}$.}
\end{figure}
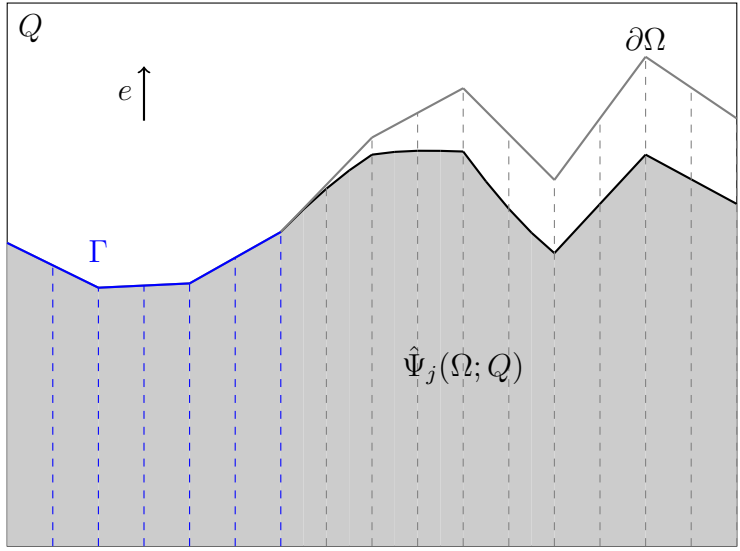

Clearly, $\hat{\Psi}_j(\cdot;Q)$ is of class $C^\infty$ on $Q$. Moreover, we can choose 
$i(j)\to \infty$ as $j\to\infty$, slowly enough so that still
$\norm{\nabla^m \eta_{i(j)}}_{L^\infty} \leq C(m) j^{\frac{1}{2}}$, for each $j,m\in \NN$ with constants $C(m)>0$ independent of $j$.
Thus
$\absb{1-\frac{j-1}{j}} \norm{\nabla^m \eta_{i(j)}}_{L^\infty}\leq C(m) j^{-\frac{1}{2}} \to 0$ as $j\to \infty$, for each $m$, and as a consequence,
we obtain that $\hat{\Psi}_j(\cdot;Q)\to \identity|_Q$ in $C^m$.
Since $\partial\Omega$ can be covered by finitely many such cubes, we can write $\overline{\Omega}\subset Q_0\cup \bigcup_{k=1}^{n} Q_k$ with some open interior set $Q_0\subset\subset \Omega$. For a smooth decomposition of unity $1=\sum_{k=0}^n \varphi_k$ subordinate to this covering of $\Omega$ (i.e., $\varphi_k$ smooth, non-negative and compactly supported in $Q_k$), 
\[
    \Psi_j(x):=\varphi_0(x)x+\sum_{k=1}^n \varphi_k(x) \hat{\Psi}_j(x;Q_k)
\]
now has the asserted properties, with $\Gamma_j:=\Gamma_{i(j)}$.
\end{proof}
\begin{proof}[Proof of Lemma~\ref{lem:shrinkcont}]
We only provide a proof for the case $k=1$, which will include the argument for $k=0$. For $k\geq 2$, the assertion follows inductively.
It suffices to show that as $j\to\infty$, $\partial_n [f\circ \Psi_j-f] \to 0$ in $L^p$, 
for each partial derivative $\partial_n$, $n=1,\ldots, d$. By the chain rule, 
\begin{align}\label{lemshrinkdef-1}
\bald
	&\partial_n [f\circ \Psi_j-f]=
	[(\nabla f)\circ \Psi_j] \partial_n \Psi_j -\partial_n f \\
	&\quad =\Big([(\nabla f)\circ \Psi_j] \partial_n \Psi_j-(\partial_n f)\circ \Psi_j \Big)
	+\Big((\partial_n f)\circ \Psi_j -\partial_n f\Big)
\eald
\end{align}
The first term above converges to zero in $L^p$ since 
$(\nabla f) e_n=\partial_n f$ for the $n$-th unit vector $e_n$, and
$\partial_n\Psi_j \to \partial_n\identity=e_n$ uniformly. The convergence of the second term correspond to our assertion for the case $k=0$, with 
$\tilde{f}:=\partial_n f\in L^p$. It can be proved in the same way as the well-known continuity of the shift in $L^p$:
If $\tilde{f}$ is smooth and can be extended to a smooth function on $\RR^d$, we have
\begin{align}\label{lemshrinkdef-2}
	\normn{\tilde{f}\circ \Psi_j -\tilde{f}}_{L^p(\Omega;\RR^d)}\leq \normn{\nabla\tilde{f}}_{L^\infty(\RR^d;\RR^{d\times d})} \norm{\Psi_j-\identity}_{L^p(\Omega;\RR^d)}\underset{j\to\infty}{\To} 0.
\end{align}
The general case follows by approximation of $\tilde{f}$ in $L^p$ with such smooth functions, by first extending 
$\tilde{f}$ by zero to all of $\RR^d$, and then mollifying.
Here, notice that for the mollified function, $\normn{\nabla\tilde{f}}_{L^\infty}$ in \eqref{lemshrinkdef-2} is unbounded in general as a function of the mollification parameter, but one can always choose the latter to converge slow enough with respect to $j$ so that \eqref{lemshrinkdef-2} still holds.
\end{proof}

\subsection{Linear elasticity with a constraint guaranteeing local invertibility}\label{ssec:LE}

Linear elasticity, a very popular model due to its computational efficiency, constitutes a standard quadratic approximation 
of nonlinear elastic energies for the case where $\nabla y$ is close to the identity matrix $I$.
Still, this potentially allows $y$ to deviate from the identity quite a lot, depending on the size of the reference domain $\Omega$, so that a global self-contact problem remains meaningful. In this model, we work with an elastic part of the energy of the form
\newcommand{\Ele}{E^{\rm el}_{\rm lin}}
\begin{equation}\label{energy_elastic}
	\Ele(y)=\int_\Omega Q(\nabla y)\dx.
\end{equation}
The  prototypical example for the  density $Q$ is a quadratic function
\begin{equation}\label{density}
    Q(\nabla y)=  \mu \abs{e}^2 + (\lambda/2) \tr^2(e)   
\end{equation}
of the symmetrized small strain tensor
\begin{equation*}
e=\frac{1}{2}[(\nabla y - I)^\top+(\nabla y - I)]
\end{equation*}
with the Lam\'e material parameters
\begin{equation}\label{Lame}
    \mu> 0 \quad\text{and}\quad \lambda>-\frac{2}{d}\mu.
\end{equation}
In particular, $Q$  as given in \eqref{density}  is strictly convex in $e$ for precisely the range of $\lambda$, $\mu$ stated above, and the existence and uniqueness of solutions (minimizers or critical points) is guaranteed for suitable boundary conditions and added linear potentials  corresponding to external forces, essentially using Korn's inequality 
 \cite{Ni81a}. While $Q$ as defined above corresponds to an isotropic material, we could just as well allow anisotropic cases.  For instance,  $Q$ can be  also be  chosen as any symmetric quadratic form which is a strictly convex function of $e$.  In fact, we do not even need that $Q$ is quadratic, \eqref{Q} below suffices. 

This model is well understood, see, e.g., \cite[Sec.~6.3]{Cia88B} for the case $d=3$, and asymptotically justified as a limit of nonlinear elasticity \cite{DalNePe02},  provided that Dirichlet conditions are imposed on a part of the boundary.
Pure traction problems are also possible but more subtle, see Remark~\ref{rem:puretraction}.  
Note however that such asymptotic results does not automatically provide insight on self-contact problems. In fact, it is not hard to see that a constraint like the Ciarlet-Ne\v{c}as condition would simply vanish in the passage to a linearized limit model unless the identity (or another rotation that we linearize at) is already in self-contact. Concerning the latter, the only related result so far available which rigorously justifies a linearized contact condition is given in \cite{AlDaFrie22Pa} and essentially limited to dimension $2$. 
As we intend to look at scenarios where the deformation is not really asymptotically small, but just has deformation gradients  moderately  close to the identity, we still aim to reproduce global injectivity as a constraint, without trying to linearize it.

An obvious issue of the model is that by itself, the linear elastic energy $\Ele$ does not enforce invertibility of the deformation $y$, neither locally nor globally. We will
therefore augment it with additional constraints 
that prevent interpenetration of matter at least locally, to make sure that the global self-contact problem remains meaningful and can be tackled with penalization. 
More precisely, we 
will admit only locally orientation preserving deformations (i.e., $\det\nabla y>0$ a.e.~in $\Omega$) also satisfying the  
constraints
\begin{align}\label{idangleconstraint}
\bald
    &l \abs{x_2-x_1}^2 \leq 
		(y(x_2)-y(x_1))\cdot (x_2-x_1),\\
		&\abs{y(x_2)-y(x_1)}\leq L	\abs{x_2-x_1},\\
	&\text{for a.e.~$x_1,x_2\in \Omega$ with $\abs{x_1-x_2}<\varrho$},
\eald
\end{align}
where $\varrho,l,L>0$ are given, fixed constants.
In particular, \eqref{idangleconstraint} implies that $y$ is uniformly locally bi-Lipschitz:
\begin{align}\label{biLi}
\bald
    l \abs{x_1-x_2} \leq 
	\abs{y(x_1)-y(x_2)} \leq L \abs{x_1-x_2}&\\
	\text{for a.e.~$x_1,x_2\in \Omega$ with $|x_1-x_2|<\varrho$}&.
\eald
\end{align}
Consequently, $|\nabla y|\leq L$ and $|(\nabla y)^{-1}|\leq l^{-1}$ in operator norm,
and for locally orientation preserving deformations $y$, $\det\nabla y\geq l^d>0$. 
Our constraints thus contain the local invertibility constraint studied in \cite{FoRo01a} as well as the locking constraints of \cite{BeKruSchloe18a}.
Such invertibility properties of $\nabla y$ by themselves would not suffice for our purposes, however, because they do not provide enough control near the boundary. By contrast, \eqref{idangleconstraint} also prevents local self-intersections near the boundary that one could otherwise create by closing outer angles. In a nonlinear elastic settings, additional constraints like \eqref{idangleconstraint} 
can possibly be avoided as shown in \cite{KroeRei22Pa}, based on the theory of functions of finite distortion \cite{HeKo14B}. However, it is doubtful whether the approach of \cite{KroeRei22Pa} would work with our penalty term here.

While further restrictions on $l$ and $L$ are not required for our theory (as long as $L>l$ so that some admissible $y$ exist), for the model it is reasonable to have $l<1<L$ and close, while $\varrho$ can be very small. With this, \eqref{idangleconstraint} implies that $\nabla y$ cannot be too far from the identity, 
with the distance controlled by $l$ and $L$. From this point of view, one can also consider \eqref{idangleconstraint} as a way to specify the range for which we consider the linear elastic model an acceptable approximation of the nonlinear case.
Besides local invertibility, \eqref{idangleconstraint} also provides additional regularity of deformations so that
the pointwise definition of $\det \nabla y$ is meaningful and the Ciarlet-Ne\v{c}as condition
\eqref{ciarletnecas} implies global invertibility a.e.. 
The natural space $W^{1,2}$ would be too weak for these purposes in dimension $d\geq 3$.  We refer to 
 \cite{GiAlPo08a} for some results with minimal requirements.

The full internal energy with an added nonlocal surface term penalizing self-interpenetration on the boundary now reads
\begin{align*}
	E_{\epsp}(y)=\left\{\begin{alignedat}[c]{2}
	&\Ele(y)+E^{\partial\Omega}_{\epsp}(y)\quad && \text{if $y\in \cY$},\\
		&+\infty && \text{else,}
\end{alignedat}\right.	
\end{align*} 
where for some fixed parameters $\varrho,l,L>0$,
\[
   \cY=\cY(\varrho;l,L):=\mysetr{y \in W^{1,2}(\Omega;\RR^d)}{\text{\eqref{idangleconstraint} holds}
	}
\]
Notice that 
\[
	\cY\subset \BiLip(\Omega;\varrho;l,L):=\{y\in W^{1,2}(\Omega;\RR^d)\mid \text{\eqref{biLi} holds}\},
\]
 $\cY\subset W^{1,\infty}$  and for all $y\in \cY$, we automatically have that $\det\nabla y\geq l^d$. 

Below, we will also need the subset of $\cY$ where the constraint \eqref{idangleconstraint} is inactive in the sense that
it holds with slightly stronger constants, i.e,
\[
   \hat{\cY}=\hat{\cY}(\varrho;l,L):=\bigcup_{t>1}\cY_t,\quad\text{where}~~\cY_t:=\cY(\varrho;l t,L/t)\subset \cY(\varrho;l,L).
\]


We claim that as $\epsp\to 0$,
$E_{\epsp}$ converges in a suitable sense to
\begin{align} 
	E(y):=\left\{\begin{alignedat}[c]{2}
	  &\Ele(y)\quad && \text{if $y\in \cY$ and \eqref{ciarletnecas} holds},\\
		&+\infty && \text{else,}
	\end{alignedat}\right.
\end{align} 
the original linear elastic energy with the Ciarlet-Ne\v{c}as condition \eqref{ciarletnecas} added as a constraint.

\begin{rem}\label{rem:constraintclosed}
The set of admissible deformations $\cY$ is closed with respect to weak convergence in $W^{1,2}$,
see the proof of Theorem~\ref{thm:convergenceLE} (i) below. It is therefore compatible with direct methods for the existence of minimizers.
 In particular, $E$ always has a minimizer obtained as a $W^{1,2}$-weak limit of a minimizing sequence. Here, notice that 
since $\cY$ is a bounded subset of $W^{1,\infty}$ (possibly up to rigid translations that can be removed,
cf.~Remark~\ref{ex:min-pen}), neither additional coercivity properties nor Korn's inequality are needed. Moreover, $W^{1,2}$-weak convergence of a sequence in $\cY$ 
automatically implies $W^{1,p}$-weak convergence for all $1\leq p<\infty$. Using $p>d$, we see that the Ciarler-Ne\v{c}as conditon \eqref{ciarletnecas} is preserved in the limit, too \cite{CiaNe87a}. 
\end{rem}

\begin{rem}\label{rem:biLiLavretiev}
From a purely theoretical perspective,  we could also replace $\cY$ with the larger set of orientation preserving maps satisfying the weaker condition \eqref{biLi},  the only property we really exploit. However, \eqref{idangleconstraint} is better for numerical results because as we will see, it admits (approximately) conforming finite elements. This seems to be unclear for \eqref{biLi}, where, as far as we known, it is unknown whether or not a Lavrentiev phenomenon could occur in our context, at least in dimension $d\geq 3$. For $d=2$, see \cite{DaPra14a} for a related approximation result.
\end{rem}

As before, discrete Galerkin-type approximations can be included by further restricting $E_{\epsp}$. 
Let $h>0$ (typically a mesh size) and let $Y_h$ be an associated finite dimensional subspace of 
$W^{1,2}(\Omega;\RR^d)$ such that 
the approximation error
 satisfies
\begin{align}\label{fespaceapproxLE}
\begin{aligned}	\cE_h(y):=
	\sup_{\tau>1}\Big(\inf_{y_h\in Y_h\cap \cY(\varrho;\tilde{l}/\tau,\tilde{L}\tau)} \norm{y-y_h}_{W^{1,2}} 
	\Big)\underset{h\to 0}{\To} 0&\\
	\text{for all}~~\tilde{l},\tilde{L}~~\text{with}~~0<l<\tilde{l}<\tilde{L}\leq L~~\text{and all}~~y\in \tilde \cY(\varrho;\tilde{l},\tilde{L})&.
\end{aligned}
\end{align}
In fact, it is enough to have the the above for $(\tilde{l},\tilde{L})$ in a small neighborhood of $(l,L)$.

The corresponding finite dimensional approximation of $E_{\epsp}$ is defined by 
\begin{align*}
	E^h_{\epsp}(y):=\left\{\begin{alignedat}[c]{2}
	&\Ele(y)+E^{\partial\Omega}_{\epsp}(y)\quad && \text{if $y\in Y_h\cap \cY$},\\
		&+\infty && \text{else,}
\end{alignedat}\right.	
\end{align*} 

 In view of the rather strong restriction \eqref{biLi},
we can actually work with a much more general class of densities $Q$, not necessarily quadratic: 
\begin{align}
	\begin{aligned} &Q:\RR^{d\times d}\to \RR~~~~~\text{is continuous and quasiconvex.} \\
	\end{aligned} \label{Q}
\end{align}
As a consequence of \eqref{Q}, $y\mapsto \Ele(y)=\int_\Omega Q(x,\nabla y)\dx$ is
$W^{1,\infty}$-weak$^*$ sequentially lower semicontinuous (see \cite[Theorem 8.4]{Da08B}, e.g.),
and also $W^{1,p}$-strongly continuous on $\cY$ (or any other bounded subset of $W^{1,\infty}$) for arbitrary $1\leq p<\infty$.
Concerning $\Ele$, this is all we will need below.

Our main result is the following.

\begin{thm}\label{thm:convergenceLE}
 Let  $\Omega\subset \RR^d$ be a bounded Lipschitz domain such that 
$\RR^d\setminus \partial\Omega$ has only two connected components, and let $\Ele$ (see \eqref{energy_elastic},  \eqref{Q}), $E^{\partial\Omega}_{\epsp}$ (see \eqref{sCN2}, \eqref{sCN1b}, \eqref{sCN1c}), $\cY$, $\hat{\cY}$, $E$ and $E^h_{\epsp}$ be given as above. In addition, assume that \eqref{fespaceapproxLE} holds  and that 
the constant $\beta$ in $E^{\partial\Omega}_{\epsp}$ satisfies $\beta>d-1$. 
For every sequence $(h(k),\epsp(k))\in (0,\infty)^2$, 
$k\in\NN$, with 
$h(k)\to 0$ and $\epsp(k)\to 0$ as $k\to \infty$, we then have the following properties for all
$y\in W^{1,2}(\Omega;\RR^d)$:
\begin{enumerate}
\item[(i)] For every sequence $y_k\rightharpoonup y$ in $W^{1,2}$ (weakly),
\[
	\liminf_{k\to\infty} E_{\eps(k)}^{h(k)}(y_k)\geq E(y);
\]
\item[(ii)$_1$] if $y\in \hat{\cY}$ or $y$ does not satisfy \eqref{ciarletnecas}, then
there exists a sequence $y_k\to y$ in $W^{1,2}$ (strongly) such that
\[
	\lim_{k\to\infty} E_{\eps(k)}^{h(k)}(y_k)= E(y);
\]
\item[(ii)$_2$] if $y\in \cY\setminus \hat{\cY}$ and $y$ satisfies \eqref{ciarletnecas}, then for every $\tau>1$
there exists a sequence 
$y_k\in Y_{h(k)}$ with $y_k\to y$ in $W^{1,2}$ (strongly),
\[
	y_k\in \cY(\varrho,l/\tau,L\tau)~~\text{and}~~ \Ele(y_k)+E^{\partial\Omega}_{\epsp}(y_k)\underset{k\to\infty}{\To} \Ele(y).
\]
\end{enumerate}
This also remains true for the case $h=0$ 
if we define $E^0_{\epsp}:=E_{\epsp}$, and the assumption \eqref{fespaceapproxLE} can be dropped in this case. 
\end{thm}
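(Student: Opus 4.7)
For (i), I would assume without loss of generality that $\liminf_k E^{h(k)}_{\eps(k)}(y_k)<\infty$ and pass to a subsequence realizing the liminf, so that $y_k\in Y_{h(k)}\cap\cY$ with uniformly bounded energy. The constraint \eqref{idangleconstraint} gives a uniform $W^{1,\infty}$-bound on $(y_k)$, so weak $W^{1,2}$-convergence upgrades to weak-$*$ $W^{1,\infty}$-convergence and uniform convergence on $\overline{\Omega}$; the pointwise inequalities in \eqref{idangleconstraint} pass to the limit, yielding $y\in\cY$. Since $\beta>d-1$ and $\sup_k E^{\partial\Omega}_{\eps(k)}(y_k)<\infty$, Corollary~\ref{cor:boundaryinvert} forces $y_k|_{\partial\Omega}$ to be injective for $k$ large, whence $y\in\AIB$ by uniform boundary convergence, and Theorem~\ref{thm:AIB-CN} (using the topological hypothesis on $\Omega$ together with $\det\nabla y\geq l^d>0$) gives \eqref{ciarletnecas} for $y$. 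The liminf inequality then follows from weak-$*$ $W^{1,\infty}$-lower semicontinuity of the quasiconvex functional $\Ele$ combined with $E^{\partial\Omega}_{\eps}\geq 0$.

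For (ii), the infinite-energy case ($y\notin\cY$ or \eqref{ciarletnecas} fails, inside (ii)$_1$) is handled by picking any $y_k\in Y_{h(k)}$ with $y_k\to y$ strongly in $W^{1,2}$ and invoking (i) along it to force $\liminf E^{h(k)}_{\eps(k)}(y_k)\geq E(y)=+\infty$. For the remaining finite-energy cases $y\in\hat{\cY}$ with \eqref{ciarletnecas} (in (ii)$_1$) and $y\in\cY\setminus\hat{\cY}$ with \eqref{ciarletnecas} (in (ii)$_2$), the recovery sequence is constructed in two layers. First, with $\Psi_j$ from Lemma~\ref{lem:shrink} applied to $\Gamma=\emptyset$, set $\tilde y_j:=y\circ\Psi_j$; by Lemma~\ref{lem:shrinkcont} and $\Psi_j\to\identity$ in $C^m$, we have $\tilde y_j\to y$ strongly in $W^{1,2}$ and weak-$*$ in $W^{1,\infty}$, and for $j$ large $\tilde y_j\in\cY$ in case (ii)$_1$ (since $y\in\hat{\cY}$ implies the constraint is inactive) or $\tilde y_j\in\cY(\varrho;l/\tau,L\tau)$ in case (ii)$_2$ (since the target constants are strictly relaxed). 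Continuity of $\Ele$ on bounded $W^{1,\infty}$-sets yields $\Ele(\tilde y_j)\to\Ele(y)$. Because $\Psi_j(\overline{\Omega})\subset\subset\Omega$ and the global injectivity imposed by \eqref{ciarletnecas} confines non-local self-contact of $y$ to $\partial\Omega$, the quantity $\min_{x_1,x_2\in\partial\Omega,\,|x_1-x_2|\geq\varrho/2}|y(\Psi_j(x_1))-y(\Psi_j(x_2))|$ is strictly positive for each $j$, so Corollary~\ref{cor:whenECNvanishes} gives $E^{\partial\Omega}_\eps(\tilde y_j)=0$ once $\eps$ is smaller than a $j$-dependent threshold. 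Extract a diagonal $j(k)\to\infty$ slowly enough that $E^{\partial\Omega}_{\eps(k)}(\tilde y_{j(k)})=0$ for $k$ large. When $h>0$, replace $\tilde y_{j(k)}$ by $y_k\in Y_{h(k)}$ via \eqref{fespaceapproxLE} inside $\cY$ (case (ii)$_1$) or inside $\cY(\varrho;l/\tau,L\tau)$ (case (ii)$_2$), with $\norm{y_k-\tilde y_{j(k)}}_{W^{1,2}}$ as small as desired; by the uniform $W^{1,\infty}$-bound on both sequences, strong $W^{1,2}$-convergence upgrades to uniform convergence, so $y_k$ inherits the non-local separation property, $E^{\partial\Omega}_{\eps(k)}(y_k)=0$ for $k$ large, and $\Ele(y_k)\to\Ele(y)$ by continuity. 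For $h=0$, no finite-element layer is needed and $y_k:=\tilde y_{j(k)}$ works directly.

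The main obstacle lies in the non-local separation step above: establishing that the non-local boundary self-contact distance of the composition $\tilde y_j=y\circ\Psi_j$ stays bounded away from zero on $\partial\Omega$. This combines the compact interior containment $\Psi_j(\overline{\Omega})\subset\subset\Omega$, continuity and local bi-Lipschitzness of $y$, and a.e.\ global injectivity from \eqref{ciarletnecas} in a compactness argument that excludes interior self-contact points accumulating on the retracted surface $\Psi_j(\partial\Omega)$. A secondary care goes into the three-fold diagonal $j(k),\eps(k),h(k)\to(\infty,0,0)$: $j(k)$ must be fixed first to beat the $\eps(k)$-threshold from Corollary~\ref{cor:whenECNvanishes}, after which \eqref{fespaceapproxLE} provides $h(k)$-level approximants of $\tilde y_{j(k)}$ preserving the penalty behaviour and the elastic energy in the limit.
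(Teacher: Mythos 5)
Your proof follows the same overall strategy as the paper's: for (i), extract a subsequence realizing the liminf, use the constraint to get a $W^{1,\infty}$-bound, pass to the limit in the constraint, invoke Corollary~\ref{cor:boundaryinvert} to get boundary injectivity and thus $y\in\AIB$, then Theorem~\ref{thm:AIB-CN} and weak-$*$ lower semicontinuity; for (ii), domain shrinking $\Psi_j$, vanishing of the boundary penalty by Corollary~\ref{cor:whenECNvanishes}, finite element approximation via \eqref{fespaceapproxLE}, and diagonalization. Your argument for the compactness step (positivity of the non-local separation on $\partial\Omega$ for $y\circ\Psi_j$) is more explicit than the paper's and correct.

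However, there is a genuine gap in the order of quantifiers of your diagonal extraction in the $h>0$ case. You write: first choose $j(k)\to\infty$ slowly enough (relative to $\eps(k)$) so that $E^{\partial\Omega}_{\eps(k)}(\tilde y_{j(k)})=0$, and \emph{then} replace $\tilde y_{j(k)}$ by $y_k\in Y_{h(k)}$ with $\norm{y_k-\tilde y_{j(k)}}_{W^{1,2}}$ ``as small as desired.'' This last step is not justified: once $j(k)$ and $h(k)$ are both fixed, the finite element approximation error $\cE_{h(k)}(\tilde y_{j(k)})$ is what it is, and \eqref{fespaceapproxLE} only guarantees $\cE_h(\tilde y_j)\to 0$ as $h\to 0$ \emph{for fixed} $j$ — it gives no uniformity in $j$. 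In particular, when $y\in\cY\setminus\hat\cY$ (constraint tight), $\tilde y_j$ approaches the constraint boundary as $j\to\infty$, and there is no reason the errors remain small along your choice of $j(k)$. The correct order is the paper's: for each fixed $j$, build a sequence $y_{j,k}\in Y_{h(k)}\cap\cY$ (resp.~$\cY(\varrho;l/\tau,L\tau)$) converging to $y\circ\Psi_j$ in $W^{1,2}$ as $k\to\infty$; observe that for each $j$ there is a $k_0(j)$ such that $k\geq k_0(j)$ gives both a small approximation error and $E^{\partial\Omega}_{\eps(k)}(y_{j,k})=0$; then choose $j(k)\to\infty$ slowly enough that $k\geq k_0(j(k))$. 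Your own concluding remark that ``$j(k)$ must be fixed first to beat the $\eps(k)$-threshold, after which \eqref{fespaceapproxLE} provides $h(k)$-level approximants'' repeats the wrong ordering: $j(k)$ must be chosen slowly relative to \emph{both} $\eps(k)$ and $h(k)$, not just $\eps(k)$, and this is only possible if one first establishes, for fixed $j$, the existence of approximants good enough for all large $k$. A secondary, minor point: for the infinite-energy case you insist that $y_k\in Y_{h(k)}$, which is unnecessary (the theorem only asks for strong $W^{1,2}$-convergence) and not guaranteed for arbitrary $y\in W^{1,2}$; the paper simply takes $y_k\equiv y$, which suffices.
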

\begin{rem}\label{rem:constrainedFE}
Finite elements fully conforming with $\cY$, i.e., \eqref{fespaceapproxLE} with fixed $\tau=1$ instead of relaxing the constraint constants using $\tau>1$, seem to be hard to obtain if not impossible.
In Proposition~\ref{prop:confFE} at the end of this section, \eqref{fespaceapproxLE} is proved for piecewise affine elements on simplicial meshes. 
\end{rem}
\begin{rem}\label{rem:BiLiconstraintactive}
Theorem~\ref{thm:convergenceLE} (ii)$_2$ is weaker than one might like, because our construction only yields the constraint \eqref{idangleconstraint}
for $y_k$ in slightly relaxed form, with constants modified by a factor $\tau>1$. As a consequence, it is too weak to ensure finite $E_{\epsp}^{h(k)}(y_k)$. On the technical level, we do not know how to handle the scenario where \eqref{ciarletnecas} holds with self-contact on the surface while the deformation maximally stretches and compresses locally nearby, say, one end forming a slim-necked peg which, after deformation, is maximally compressed and stuck in a superficial, maximally stretched opening somewhere else on the body. In such an example, it is possible that $E_{\epsp}$ is infinite in a whole neighborhood of $y$ for all $\epsp>0$, and the correct value of the limit functional at such a $y$ should read $E(y)=+\infty$, not $E(y)=\Ele(y)$ as we defined $E$.
\end{rem}

\begin{proof}[Proof of Theorem~\ref{thm:convergenceLE}]
{\bf (i) \say{Lower bound}:} Let $y_k\rightharpoonup y$ be $k\to\infty$, weakly in $W^{1,2}$. 
Passing to a subsequence if necessary, we may assume that 
$e_0:=\liminf E^{h(k)}_{\epsp(k)}(y_k)$ is a limit. Moreover, 
we may assume that $e_0<\infty$, because
otherwise the asserted lower bound is trivial.
As a consequence, $y_k\in \cY$ for all sufficiently large $k$, because $E^{h(k)}_{\epsp(k)}=+\infty$ on $W^{1,2}\setminus \cY$. 
 In particular, $(y_k)$ is bounded in $W^{1,\infty}$ and we also have that $y_k\rightharpoonup^* y$ in $W^{1,\infty}$. 
By compact embedding, again passing to a subsequence if necessary,
we have $y_k\to y$ uniformly on $\overline{\Omega}$. In particular, the limit $y$ also 
satisfies \eqref{idangleconstraint},  whence  $y\in \cY$.

With $K:=e_0+1$, we also have that $E^{h(k)}_{\sigma,\eps(k)}(y_k)\leq K$ for all $k$ sufficiently large.
As a consequence of this energy bound and the fact that $\Ele$ is bounded from below  on $\cY$,  
$E^{\partial\Omega}_{\epsp(k)}(y_k)$ is bounded, so that $y_k$ is invertible on $\partial\Omega$ by Corollary~\ref{cor:boundaryinvert} for all large enough $k$. Hence, $y\in \AIB$ (cf.~Definition~\ref{def:AIB} and Remark~\ref{rem:AIB-CN}) and $y$ satisfies the Ciarlet-Ne\v{c}as condition \eqref{ciarletnecas} due to Theorem~\ref{thm:AIB-CN}. As $\Ele$ is  weakly$^*$ sequentially lower semicontinuous in $W^{1,\infty}$,  we infer that
\[
	\liminf E^{h(k)}_{\epsp(k)}(y_k)\geq \liminf \Ele(y_k)\geq \Ele(y)=E(y).
\]

{\bf (ii)$_1$ recovery sequence for inactive constraint:}
If \eqref{ciarletnecas} does not hold, we may choose $y_k\equiv y$. Otherwise, $y:\Omega\to y(\Omega)$ is a homeomorphism, as a locally bi-Lipschitz map satisfying \eqref{ciarletnecas}. 
 To obtain an approximation of $y$ with a controllable contribution in $E^{\partial\Omega}_{\epsp(k)}$, we   
first  create a small gap all around the boundary, 
using the smooth injective maps $\Psi_j:\bar\Omega\to \Omega$ close to the identity from Lemma~\ref{lem:shrink} which shrink the Lipschitz domain $\Omega$ into itself. $y\circ \Psi_j$ fully avoids self-contact: $y\circ \Psi_j(\overline{\Omega})$ is compactly contained in the open set $y(\Omega)$ and $y\circ \Psi_j$ is still locally bi-Lipschitz. In particular, $E_{\epsp}^{\partial\Omega}(y\circ \Psi_j)=0$ for all $\epsp>0$ small enough by Corollary~\ref{cor:whenECNvanishes}. Moreover, 
$y\in \cY(\varrho;lt,L/t)$ for some $t>1$, and thus
$y\circ \Psi_j \in \hat\cY$ for all large enough $j$ because 
$\Psi_j\to\identity$ in $C^1$. In addition, $y\circ \Psi_j\to y$ in $W^{1,2}$ by Lemma~\ref{lem:shrinkcont}.
For each $j$, we approximate $y\circ \Psi_j$ with suitable finite elements $y_{j,k}\in Y_{h(k)}\cap \hat\cY$ according to \eqref{fespaceapproxLE}.
This leads to a sequence $y_{j,k}\in Y_{h(k)}\cap \cY$ such that as $k\to\infty$, $y_{j,k}\to y\circ \Psi_j$ in $W^{1,2}$, 
and for all $k$ large enough (depending on $j$) $E_{\epsp(k)}^{\partial\Omega}(y_{j,k})=0$.
Moreover, $\lim_j \lim_k \Ele(y_{j,k})=\lim_j \Ele(y\circ \Psi_j)=\Ele(y)$ by continuity of $\Ele$  on $\cY$ with respect to the strong topology of  $W^{1,2}$. 
With a suitable diagonal sequence $y_k:=y_{j(k),k}$ with $j(k)\to\infty$ (slow enough),
$y_k\to y$ in $W^{1,2}$ and $E_{\epsp(k)}^{h(k)}(y_k)=\Ele(y_k)\to \Ele(y)$ as asserted.

{\bf (ii)$_2$ recovery sequence with weaker constraint constants:} 
This is fully analogous to $(ii)_1$. We now use \eqref{fespaceapproxLE} with $\tilde{l}=l$ and
$\tilde{L}=L$.
\end{proof}

\subsection{Nonlinear elasticity with higher order terms}\label{ssec:NLEnonsimple}
As an alternative model, we briefly revisit the scenario already studied in \cite{KroeVa19a}. 
There, artificial constraints like \eqref{idangleconstraint} or \eqref{biLi} are not imposed. Instead,
a higher order term is added to a nonlinear elastic energy, leading to a model of a so-called non-simple material. 
The local bi-Lipschitz property required for our analysis of the boundary penalty term in Theorem~\ref{thm:cnsoftboundary}
will now be obtained as consequence of an energy bound.

For $y\in W^{1,p}(\Omega;\RR^d)$, consider
the penalized energy given by
\begin{align*}
	 E_{\epsp,\sigma}(y)=\left\{\begin{alignedat}[c]{2}
	&  E^{el}(y) 	+E^{reg}_{\sigma}(y)+E^{\partial\Omega}_{\epsp}(y)\quad && \text{if $y\in W^{2,s}$},\\
		&+\infty && \text{else,}
\end{alignedat}\right.	
\end{align*} 
in the limit as  $\epsp\to 0$. 
We will see that it converges to
\begin{align*}
	E_{\sigma}(y)=\left\{\begin{alignedat}[c]{2}
	  &E^{el}(y) +E^{reg}_{\sigma}(y)\quad && \text{if $y\in W^{2,s}$ and \eqref{ciarletnecas} holds},\\
		&+\infty && \text{else,}
	\end{alignedat}\right.
\end{align*} 
the original energy which includes the Ciarlet-Ne\v{c}as condition \eqref{ciarletnecas} as a built-in constraint.
 Here, 
\[
	E^{el}(y)=\int_\Omega W( \nabla y)\dx.
\]
 
where 
\begin{align}\label{W0}
\begin{aligned}
	&  W: \RR^{d\times d} \to \RR\cup \{+\infty\}~~\text{is continuous} \\
\end{aligned}
\end{align}
Moreover, for all  $F\in \RR^{d\times d}$,  
\begin{align}\label{W1}
&\begin{alignedat}{2}
	+\infty\,=\,&  W(F)  && \quad\text{if $\det F\leq 0$,}\\
	+\infty\,>\,&  W(F)  \,\geq\, c_1\left(\abs{F}^p+(\det F)^{-q}\right)-c_2 && \quad\text{if $\det F>0$,}\\
\end{alignedat}
\end{align}
with constants $q>d$ (which is necessary for \eqref{pqsd} below), $c_1>0$ and $c_2\geq 0$. 
In addition, we assume that $W$ is polyconvex, i.e., 
\begin{align}\label{W2}
\bald
   W(F)=\Psi(m(F)),~~&\text{with a  convex function $\Psi$,}
\eald
\end{align}
where $m(F)\in \RR^{n(d)}$, $n(d):=\sum_{k=1}^{d} \binom{d}{k}^2$, denotes the collection of all minors of $F$, i.e., all $k\times k$ sub-determinants with $1\leq k\leq d$. 
This means that for $d=2$, $m(F)=(F,\det F)\in \RR^{5}$ and for $d=3$, $m(F)=(F,\cof F, \det F)\in \RR^{19}$. Here,
$\cof F\in \RR^{d\times d}$ denotes cofactor matrix so that
$F^{-1}=(\cof F)^T (\det F)^{-1}$ whenever $F$ is invertible. 

 It would also be possible to use a more general $W$ with explicit dependence on $x$ or approximate it with truncated, everywhere finite integrands $W_{\epse}$ (which are safer for numerical purposes), subsequently considering the simultaneous limit $(\epse,\epsp)\to (0,0)$
as in \cite{KroeVa19a}. As our numerical experiment 
are not conducted in this framework, however, we will not further discuss these generalizations here. 

\begin{rem}
Due to \cite{Ba77a,CiaNe87a}, $E^{el}$ always has a minimizer $y^*$ in $W^{1,p}(\Omega;\RR^d)$ even if partial Dirichlet boundary conditions or compact force terms are added. Like all states with finite energy, it must satisfy 
$\det \nabla y^*>0$ a.e.~in $\Omega$.
\end{rem}
With the penalty proposed here, classical nonlinear elasticity seems to be out of reach for our analysis concerning the approximation of invertibility with surface penalization without additional constraints.  We thus further
modify the elastic energy by adding a regularizing term. 
 Accordingly, we regularize $E^{el}$ by adding 
the  higher order term
\begin{align*}
	E^{reg}_{\sigma}(y):=\sigma\int_\Omega \abs{D^2 y}^s \dx,
\end{align*}
 with a fixed parameter $\sigma>0$.
Altogether, the exponents are assumed to satisfy
\begin{align}\label{pqsd}
	p>d, \quad s>d, \quad q>\frac{sd}{s-d}.
\end{align}
Here, as in \cite{KroeVa19a}, the latter ensures that
 $q,s$ are admissible for 
 the result of
\cite{HeaKroe09a} 
summarized in Lemma~\ref{lem:HK},  so that we can obtain a uniform lower bound for $\det \nabla y$. 

Discrete Galerkin-type approximations can also be included. 
For that, let
$h>0$ (typically a mesh size) and let $Y_h$ denote an associated finite dimensional subspace of 
$(W^{2,s}\cap W^{1,p})(\Omega;\RR^d)$ (typically $Y_h\subset W^{2,\infty}$) such that
for each $y$, the approximation error $\cE(y;h)$ satisfies
\begin{align}\label{fespaceapprox}
	\cE(y;h):= \inf_{y_h\in Y_h}\big(\{\norm{y-y_h}_{W^{2,s}\cap W^{1,p}}\big)
	\underset{h\to 0}{\To} 0.
\end{align}
The corresponding finite dimensional approximations of $E_{\epsp,\sigma}$ are
\begin{align*}
	E^h_{\epsp,\sigma}(y):=\left\{\begin{alignedat}[c]{2}
	&E^{el}(y)+E^{reg}_{\sigma}(y)+E^{\partial\Omega}_{\epsp}(y)\quad && \text{if $y\in Y_h$},\\
		&+\infty && \text{else,}
\end{alignedat}\right.	
\end{align*} 
For this model, we have convergence of  $E^h_{\epsp,\sigma}$  to  $E_{\sigma}$ in the following sense:
\begin{thm}\label{thm:convergence}
Let $\sigma>0$ and $\beta>d-1$ be fixed, let $\Omega\subset \RR^d$ be a bounded Lipschitz domain such that 
$\RR^d\setminus \partial\Omega$ has only two connected components,
and assume that \eqref{W0}--\eqref{pqsd} hold.
For every sequence  $(h(k),\epsp(k))\in (0,\infty)^2$, 
$k\in\NN$, with 
$h(k)\to 0$ and $\epsp(k)\to 0$ as $k\to \infty$, we have the following two properties for all
$y\in W^{2,s}(\Omega;\RR^d)$:
\begin{enumerate}
\item[(i)] For every sequence $y_k\rightharpoonup y$ in $W^{2,s}$ (weakly),
\[
	\liminf_{k\to\infty} E_{\epsp(k),\sigma}^{h(k)}(y_k)\geq E_{\sigma}(y);
\]
\item[(ii)] there exists a sequence $y_k\to y$ in $W^{2,s}$ (strongly) such that
\[
	\lim_{k\to\infty} E_{\epsp(k),\sigma}^{h(k)}(y_k)= E_{\sigma}(y).
\]
\end{enumerate}
This also holds for the  case $h=0$, 
if we define $E^0_{\epsp,\sigma}:=E_{\epsp,\sigma}$. 
\end{thm}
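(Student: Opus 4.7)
The plan is to adapt the two-step structure of the proof of Theorem~\ref{thm:convergenceLE}, replacing the role of the locking constraint \eqref{idangleconstraint} by the additional regularity provided by $E^{reg}_{\sigma}$ and by the singular $(\det F)^{-q}$-term in \eqref{W1}. This regularity automatically yields the local bi-Lipschitz property \eqref{biLi} which was built in via \eqref{idangleconstraint} in the linear case and which is required in order to invoke Theorem~\ref{thm:cnsoftboundary} and Corollary~\ref{cor:boundaryinvert}.

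For (i), I would assume without loss of generality that the $\liminf$ is finite and pass to an attaining subsequence. The uniform bound on $E^{el}(y_k)+E^{reg}_{\sigma}(y_k)$ combined with \eqref{W1} gives $(y_k)$ bounded in $W^{2,s}\cap W^{1,p}$ and a uniform bound on $\int_{\Omega}(\det\nabla y_k)^{-q}\dx$. Since $s>d$ by \eqref{pqsd}, the compact embedding $W^{2,s}\hookrightarrow C^{1,\alpha}$ with $\alpha=(s-d)/s$ yields $y_k\to y$ in $C^{1,\alpha'}$ for every $\alpha'<\alpha$ together with a uniform $C^{\alpha}$-bound on $\nabla y_k$. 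Lemma~\ref{lem:HK} applied to $J_k:=\det\nabla y_k$ then produces a common $\delta>0$ with $J_k\geq\delta$ on $\Omega$, and Lemma~\ref{lem:biLi} in turn yields uniform constants $\varrho,l,L$ such that every $y_k$ satisfies \eqref{biLi}; in particular \eqref{biLiB} holds on $\partial\Omega$ uniformly in $k$. Combined with the uniform energy bound and the standing assumption $\beta>d-1$, Corollary~\ref{cor:boundaryinvert} forces $y_k|_{\partial\Omega}$ to be injective for all large $k$, so $y\in\AIB$ by Remark~\ref{rem:AIB-CN} and Theorem~\ref{thm:AIB-CN} yields \eqref{ciarletnecas} for $y$. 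Polyconvexity of $W$ gives weak $W^{1,p}$-lower semicontinuity of $E^{el}$, convexity of $F\mapsto|F|^{s}$ gives weak $W^{2,s}$-lower semicontinuity of $E^{reg}_{\sigma}$, and $E^{\partial\Omega}_{\epsp(k)}\geq 0$ closes the estimate.

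For (ii), if $E_{\sigma}(y)=+\infty$ any sequence $y_k\in Y_{h(k)}$ with $y_k\to y$ in $W^{2,s}$ (available by \eqref{fespaceapprox}) works by applying (i). Otherwise $y\in W^{2,s}$ has finite elastic energy and satisfies \eqref{ciarletnecas}, and Lemmas~\ref{lem:HK} and~\ref{lem:biLi} show that $y$ is itself locally bi-Lipschitz with $\det\nabla y\geq\delta>0$; together with \eqref{ciarletnecas} this makes $y$ a homeomorphism of $\Omega$ onto $y(\Omega)$. I would then apply Lemma~\ref{lem:shrink} with $\Gamma=\emptyset$ to obtain diffeomorphisms $\Psi_j\to\identity$ in every $C^m$ with $\Psi_j(\overline{\Omega})\subset\subset\Omega$, so that $y\circ\Psi_j(\overline{\Omega})\subset\subset y(\Omega)$, $y\circ\Psi_j$ is injective on $\overline{\Omega}$, and
\[
    c_j:=\inf\mysetr{|y\circ\Psi_j(x_1)-y\circ\Psi_j(x_2)|}{x_1,x_2\in\partial\Omega,\ |x_1-x_2|\geq\tfrac{\varrho}{2}}>0
\]
for each $j$. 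Corollary~\ref{cor:whenECNvanishes} then gives $E^{\partial\Omega}_{\epsp}(y\circ\Psi_j)=0$ as soon as $R\epsp<c_j$, while Lemma~\ref{lem:shrinkcont} ensures $y\circ\Psi_j\to y$ in $W^{2,s}$. For each $j$, \eqref{fespaceapprox} produces $y_{j,k}\in Y_{h(k)}$ with $y_{j,k}\to y\circ\Psi_j$ in $W^{2,s}$; by the embedding $W^{2,s}\hookrightarrow C^1$ both the uniform positivity of the Jacobian and the separation \eqref{corwECNv-1} persist for $k$ large (depending on $j$), so $E^{\partial\Omega}_{\epsp(k)}(y_{j,k})=0$. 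Continuity of $E^{el}+E^{reg}_{\sigma}$ with respect to strong $W^{2,s}$-convergence on sets where $\det\nabla y$ is bounded away from zero yields $E^{el}(y_{j,k})+E^{reg}_{\sigma}(y_{j,k})\to E^{el}(y\circ\Psi_j)+E^{reg}_{\sigma}(y\circ\Psi_j)\to E_{\sigma}(y)$ as first $k\to\infty$ and then $j\to\infty$, and a diagonal choice $y_k:=y_{j(k),k}$ with $j(k)\to\infty$ slowly enough provides the recovery sequence.

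The main obstacle I anticipate is the tight chain of implications in (i) that converts a mere energy bound into a \emph{uniform} local bi-Lipschitz property for the entire sequence $(y_k)$: one needs simultaneously the uniform $C^{1,\alpha}$-control of $\nabla y_k$ coming from $E^{reg}_{\sigma}$, the uniform positivity of $\det\nabla y_k$ supplied by the singular part of $W$ via Lemma~\ref{lem:HK}, and the exponent condition \eqref{pqsd} precisely to match these two, all of which must line up to produce \emph{common} constants $\varrho,l,L$ in Lemma~\ref{lem:biLi}. Only with such common constants does Corollary~\ref{cor:boundaryinvert} apply uniformly in $k$ and unlock the $\AIB$/Ciarlet-Ne\v{c}as mechanism through Theorem~\ref{thm:AIB-CN}. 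The analogous bookkeeping in (ii), namely transferring the separation distance $c_j$ from $y\circ\Psi_j$ to its finite-element approximants $y_{j,k}$, is also delicate but routine thanks to the compact embedding $W^{2,s}\hookrightarrow C^1$.
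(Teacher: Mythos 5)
Your proposal is correct and follows essentially the same approach as the paper's proof: energy bound $\Rightarrow$ uniform $C^{1,\alpha}$-control from $E^{reg}_\sigma$ and Lemma~\ref{lem:HK} (enabled by \eqref{pqsd}) $\Rightarrow$ uniform bi-Lipschitz constants via Lemma~\ref{lem:biLi} $\Rightarrow$ boundary injectivity via Corollary~\ref{cor:boundaryinvert} $\Rightarrow$ Ciarlet-Ne\v{c}as condition through Theorem~\ref{thm:AIB-CN}; then domain-shrinking and finite element interpolation with a diagonal sequence for the recovery step. The one place you diverge is the treatment of $E^{el}$ in (i): you invoke polyconvexity and weak $W^{1,p}$-lower semicontinuity, whereas the paper exploits the strong $W^{1,\infty}$-convergence of $\nabla y_k$ coming from the compact embedding $W^{2,s}\hookrightarrow C^{1,\alpha}$ together with uniform continuity of $W$ on the set $\{|F|\leq M_1,\ \det F\geq\delta\}$ to get actual \emph{convergence} $E^{el}(y_k)\to E^{el}(y)$ rather than just a $\liminf$ inequality. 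Both routes close the lower-bound estimate; the paper's version is slightly more elementary (it never really needs polyconvexity in this proof) and gives the stronger continuity statement that is reused in step (ii). Your handling of the $E_\sigma(y)=+\infty$ case in (ii) — approximate $y$ in $Y_{h(k)}$ and fall back on (i) — is actually a touch more careful than the paper's remark, which informally takes $y_k\equiv y$ and thus glosses over the requirement $y_k\in Y_{h(k)}$.
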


\begin{proof}
We provide only a proof for the  case 
including  Galerkin approximations with $h(k)>0$, $h(k)\to 0$. 
The case $h=0$ is similar and even slightly simpler.
{\bf (i) \say{Lower bound}:} Let $y_k\rightharpoonup y$ as $k\to\infty$, weakly in $W^{2,s}$. By compact embedding, this implies that
$y_k\to y$ strongly in $W^{1,\infty}$.
Passing to a suitable subsequence (not relabeled), we may assume that
$e_0:=\liminf E^{h(k)}_{\sigma,\eps(k)}(y_k)=\lim E^{h(n)}_{\sigma,\eps(n)}(y_k)$. In addition, we may assume that 
$e_0<+\infty$ because otherwise there is nothing to show. With $K:=e_0+1$,
we have $E^{h(k)}_{\sigma,\eps(k)}(y_k)\leq K$ for all $k$ sufficiently large.

As a consequence of the energy bound, \eqref{yconstants} holds for $y=y_k$. More precisely, the bounds on $\nabla y_k$
follow by embedding and the fact that $E_\sigma^{reg}(y_k)$ and $E^{el}(y_k)$ control $\norm{D^2 y_k}_{L^s}$ and $\norm{\nabla y_k}_{L^p}$, respectively.
The lower bound on $J:=\det\nabla y_k$ is provided by 
 Lemma~\ref{lem:HK} 
combined with our assumptions \eqref{W1} and \eqref{pqsd} on $W$. Given $\delta>0$ and $M_1$ as in \eqref{yconstants}, \eqref{W0} and \eqref{W1} imply that 
\begin{align}\label{Wlocunicont}
	\text{$W$ is uniformly continuous on}~~\mysetr{F\in \RR^{d\times d}}{\begin{array}[c]{l} \abs{F}\leq M_1,\\ \det F\geq \delta\end{array}}.
\end{align}
As $\nabla y_k\to \nabla y$ in $L^\infty$, we infer that
\begin{align}\label{tconv-1}
	E^{el}(y_k)=\int_\Omega W(\nabla y_k)\dx \underset{k\to\infty}{\To} \int_\Omega W(\nabla y)\dx=E^{el}(y).
\end{align}
Moreover, by the weak lower semicontinuity of the convex functional $E^{reg}_\sigma$,
\begin{align}\label{tconv-2}
	\liminf_{k\to\infty} E^{reg}_\sigma(y_k)\geq E^{reg}_\sigma(y).
\end{align}
In addition to \eqref{tconv-1} and \eqref{tconv-2}, 
it also trivially holds that $E^{\partial\Omega}_{\epsp(k)}\geq 0$. Thus, we conclude that
$\liminf E^{h(k)}_{\sigma,\eps(k)}(y_k)\geq E_\sigma(y)$ as asserted, provided that
$y$ satisfies the Ciarlet-Ne\v{c}as condition. The latter follows from Theorem~\ref{thm:AIB-CN}, Remark~\ref{rem:AIB-CN} and Corollary~\ref{cor:boundaryinvert}.

{\bf (ii) Existence of a strongly converging recovery sequence:}
 We  may assume that $E_\sigma(y)<+\infty$, because otherwise 
$E_{\eps(k),\sigma}^{h(k)}(y)\to+\infty=E_\sigma(y)$.
Hence, $y$ satisfies \eqref{ciarletnecas}.
As $s>d$, $y$ is also $C^1$.  Moreover, 
the fact that $E^{reg}_\sigma(y)+E^{el}(y)<+\infty$ implies that 
$\inf \det\nabla y>0$ by 
 Lemma~\ref{lem:HK} ($s$, $\alpha:=\frac{s-d}{s}$  and $q$ are admissible for this result due to \eqref{pqsd}).
We infer that $y(\Omega)$ is open and $y:\Omega\to y(\Omega)$ is a homeomorphism that is locally uniformly bi-Lipschitz by
Lemma~\ref{lem:biLi}.
The main remaining difficulty is the possibility that $y$ exhibits self-contact on the boundary. 
To handle this, we  proceed as in the proof of Theorem~\ref{thm:convergenceLE} $(ii)_1$. 
First  create a small gap around the boundary, 
using $y\circ \Psi_j$ with smooth injective maps $\Psi_j:\bar\Omega\to \Omega$ close to the identity from Lemma~\ref{lem:shrink} that shrink the Lipschitz domain $\Omega$ into itself.  The composition  $y\circ \Psi_j(\overline{\Omega})$ is compactly contained in the open set $y(\Omega)$, and $y\circ \Psi_j$ is still locally bi-Lipschitz.
Moreover, $y\circ \Psi_j\to y$ in $W^{2,s}\cap W^{1,p}$ by Lemma~\ref{lem:shrinkcont}.
Further approximations of $y\circ \Psi_j$  with finite elements in $Y_h$ based on \eqref{fespaceapprox} 
can now be made while maintaining a safe distance from self-contact or loss of local invertibility. 
In particular, since 
$\liminf_j \inf \det\nabla (y\circ \Psi_j)>0$,
we avoid the singularity of $W$ and $E^{el}$ behaves continuously along our sequence  as in (i).  
 We  obtain $y_{j,k}\in Y_{h(k)}$ such that as $k\to\infty$, $y_{j,k}\to y\circ \Psi_j$ in $W^{2,s}$, $E_{\epsp(k)}^{\partial\Omega}(y_{j,k})=0$ for all sufficiently large $k$ (using Corollary~\ref{cor:whenECNvanishes}) and $E_{\eps(k),\sigma}^{h(k)}(y_{j,k})\to E_\sigma(y\circ \Psi_j)$. 
A suitable diagonal sequence $y_k:=y_{j(k),k} \to y $ with $j(k)\to\infty$ (slow enough)
 now yields the assertion. 
\end{proof}

\subsection{Remarks on the theoretical results}\label{ssec:remarks}

\begin{rem}[Generalizations: force terms and boundary conditions] \label{rem:DC}
Both Theorem~\ref{thm:convergence} and Theorem~\ref{thm:convergenceLE} can be easily generalized by adding a term to energy which is weakly lower semicontinuous and strongly continuous in the relevant space, i.e., in $W^{2,s}$ for the former and in $W^{1,2}$ for the latter. 
This comprises potentials associated to many typical force terms, including those used in our numerical experiments of Section~\ref{sec:num}. Generalization including boundary conditions, say, a Dirichlet condition like
$y=\identity$ on a closed subset $\Gamma_D$ of $\partial\Omega$, would also make sense, 
as long as the boundary condition is compatible with the constraints and stays away from self-contact. 
The proofs of the theorems can be extended to cover this case: While the lower bound (i) in Theorem~\ref{thm:convergence} and Theorem~\ref{thm:convergenceLE} is not affected at all, constructions for (ii) have to be adjusted to respect additional boundary conditions, but
Lemma~\ref{lem:shrink} with $\Gamma:=\Gamma_D$ is suitable for this purpose. 
However, the second and bigger problem is hidden in the assumption \eqref{fespaceapproxLE}, i.e., 
the density of suitable finite elements, now with a Dirichlet condition on $\Gamma_D$ added to the definition of $\cY$.
It is not clear if Proposition~\ref{prop:confFE} can be extended to this case.
\end{rem}

\begin{rem}[Pure traction problems]\label{rem:puretraction}
In our numerical experiments, we do impose a Dirichlet condition on a part of the boundary to avoid problems with coercivity.
If such boundary conditions are completely dropped, this  
leads to so-called pure traction problems, where the deformation is nowhere fixed but subject to additional (conservative) body and surface forces. The rigorous asymptotic derivation of linear from nonlinear elasticity is subtle in such a scenario 
\cite{MaPeTo19a,MaPeTo19b,MaMo21a,MaiPe21a}: The forces have to be suitably equilibrated to avoid energies which are not even bounded from below along rigid translations, and it is not always obvious which rigid motion is preferred by the forces as the natural point to linearize at. 

The approaches of \cite{MaPeTo19a,MaPeTo19b,MaiPe21a} and \cite{MaMo21a}, respectively, 
differ in the way the displacement is defined from a given deformation map $y$.
If we assume for simplicity that we linearize at the identity, then apart from rescaling for the small strain limit, the options are either the standard displacement $u=y-\identity$ \cite{MaPeTo19a,MaPeTo19b,MaMo21a} or 
the renormalized displacement $\tilde{u}=R_y^\top(y-(R_yx+c_y))$ with respect to the $y$-dependent optimal ``reference configuration'' $R_yx+c_y$, the rigid motion which minimizes $\tilde{u}$ in $W^{1,2}$ \cite[(1.3)]{MaMo21a}. The latter can avoid the technical condition of ``compatibility'' (of forces) \cite[(2.25)]{MaPeTo19a} in context of compactness (see also \cite{MaiPe21a} for a deeper discussion). 
In addition, both approaches suggest extending the linear elastic limit model, 
by introducing admissible states consisting of a pair $(u,W_0)$, the displacement and an antisymmetric matrix $W_0$
which represents an infinitesimal rotation and enters the energy as a correction. However, global minimizers in the limit models typically 
can be obtained with $W_0=0$, see \cite[Corollary 4.2]{MaPeTo19a} 
and \cite[p.5]{MaMo21a}, respectively.

For our purposes in Subsection~\ref{ssec:LE}, this means we have to be careful how we should interpret our linear elastic model as an approximation of a nonlinear elastic pure traction problem with moderate strains. Clearly, we need that the latter was already properly rotated so that the optimal rotation to linearize at is given by the identity. (If compatibility does not hold, the optimal rotation is not uniquely determined by the forces and not visible in the linearized model!) Fortunately, using $u=y-\identity$ as before is reasonable even if we choose to follow the point of view of \cite{MaMo21a}, because this does match their construction of the recovery sequence (the proof of the ``upper bound'' of \cite[Thm. 5.2]{MaMo21a}) when approximating a global minimizer with $W_0=0$ (and the optimal rotation normalized to $R_0=\idmatrix$). 
Be warned that more general forces or additional constraints can potentially further complicate the picture.
\end{rem}

\begin{rem}[ $\Gamma$-convergence]~\label{rem:Gamma-convergence}
Combined, (i) and (ii) in Theorem~\ref{thm:convergence} are equivalent to Mosco convergence \cite{Mo69a} of $E_{\eps(k),\sigma}^{h(k)}$ to $E_{\sigma}$,
which is stronger than $\Gamma(W^{2,s}\text{-weak})$-convergence (see, e.g., \cite{Dal93B}), since it requires the existence of a \emph{strongly} converging \say{recovery sequence} in (ii). Among other things, the limit functional is always unique if it exists in the sense that (i) and (ii) hold.
\end{rem}

\begin{rem}[Convergence of discrete minimizers]
(i) and (ii) in Theorem~\ref{thm:convergence} imply that a sequence of minimizers of $E_{\eps(k),\sigma}^{h(k)}$ always has a subsequence which weakly converges to a minimizer of $E_{\sigma}$. In the linear elastic setting of Theorem~\ref{thm:convergenceLE}, this is very similar, except that
(ii)$_1$ and (ii)$_2$ combined still do not fully cover the borderline case $y\in \cY\setminus \hat\cY$ where the local constraint is active. Any $y^*\in \hat\cY$ that arises a weak limit in $W^{1,2}$ of a sequence $(y_k)$ of minimizers of $E_{\epsp(k)}^{h(k)}$ is automatically a minimizer of $E$ in 
$\cY$, but we do not know what happens if $y^*\in \cY\setminus \hat\cY$.
\end{rem}

\begin{rem}[Possible non-uniqueness of minimizers]
When we compare Theorem~\ref{thm:convergence} and Theorem~\ref{thm:convergenceLE} to classical numerical convergence results, (i) and (ii) in a sense play the role of stability and consistency, respectively. 
As explained in more detail in the previous remark, we only get (weak) convergence of a sequence of discrete minimizers up to a subsequence, though.
In our scenarios, much more cannot be expected because in general, global minimizers do not have to be unique, as for instance the classical example of the  buckling  rod shows in the nonlinear elastic setting. Moreover, even in the linear elastic setting, the nonlinear constraints can break uniqueness.
One such example was observed in \cite{FoFreRo08a} for a local determinant constraint. The global invertibility constraint \eqref{ciarletnecas} apparently can break uniqueness, too, for instance in our numerical pincer example, where, when pressed enough, the pincers naturally have two symmetric ways of sliding past each other. As far as we know, no analytical results on this kind of nonlocally driven bifurcation scenario are available so far, though.
\end{rem}

\begin{rem}[Further errors due to numerical integration]
As defined, $E^h_{\epsp,\sigma}$ and $E^h_{\epsp}$ are assumed to be exact on their associated finite element space. In practice, 
additional approximations are usually needed at this point. In order to not break the analysis above,
corresponding additional errors terms should converge to zero as $(h,\epsp)\to 0$ along any sequence of states with bounded penalized energies. 
As a rule of the thumb, this forces a scaling regime where the mesh is fine enough with respect to $\epsp$, i.e., $h<<\epsp$.
\end{rem}

\begin{rem}[Existence of penalized minimizers] \label{ex:min-pen}
By standard applications of the direct method, for fixed $\eps$ and $h$, we can always get the existence of minimizers for the penalized energies,
both for the nonlinear elastic
$E_{\epsp,\sigma}$, $E^h_{\epsp,\sigma}$ and the linear elastic $E_{\epsp}$, $E^h_{\epsp}$.
In particular, the penalty term $E_\epsp^{\partial\Omega}$ is continuous in $L^1(\partial\Omega;\RR^d)$, a space into which the trace embeds compactly in both of our models (for deformations with bounded energy). Moreover, the constraints of $E_{\epsp}$, $E^h_{\epsp}$ built into $\cY$ are stable under weak convergence in $W^{1,2}$ on sets of bounded energy, cf.~the proof of Theorem~\ref{thm:convergenceLE} (i). 
We also point out that without additional terms or boundary conditions, all energies are translation invariant:
constant vectors can be added to $y$ without changing the energy. In addition, the linear elastic energy is invariant with respect to addition of linear transformations with vanishing symmetric part of the matrix.
Nevertheless, coercivity of the energy can be recovered by working in subspaces of $W^{2,s}(\Omega;\RR^d)$ or $W^{1,2}(\Omega;\RR^d)$ that remove these symmetries, say, by fixing appropriate averages. This extra step is not needed if added  constraints, 
boundary conditions or terms in the energy already fix or control the otherwise free constants. 
\end{rem}

\subsection{Conforming finite elements for the constrained linear elasticity} 

We now show that asymptotically conforming finite elements 
as assumed in \eqref{fespaceapproxLE} for Theorem~\ref{thm:convergenceLE} actually exist.
\begin{prop}\label{prop:confFE}
Suppose that the bounded domain $\Omega\subset \RR^d$ has a polygonal 
boundary and thus can be triangulated\footnote{i.e., $\Omega=\bigcup_{k} \overline{T_k}$ with finitely many simplices $T_k$ with pairwise disjoint interior (triangles for $d=2$, tetrahedra for $d=3$)}. Moreover, let $H\subset (0,\infty)$ with $\inf H=0$ and let
$Y_h\subset W^{1,\infty}(\Omega;\RR^d)$, $h\in H$, such that each $Y_h$ is the set of all functions which are piecewise affine with respect to a simplicial mesh associated to $Y_h$, of mesh size at most $h$ and triangulating $\Omega$. Then \eqref{fespaceapproxLE} holds.
\end{prop}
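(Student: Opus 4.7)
The plan is to combine piecewise affine nodal interpolation on shape-regular simplicial meshes with a smoothing step that preserves the local bi-Lipschitz structure. First, I would exploit the freedom in the mesh by choosing a family $\mathcal{T}_h$ of uniformly shape-regular triangulations of the polygonal domain $\Omega$, which is standard for polygonal $\Omega$. For given $y\in\cY(\varrho;\tilde l,\tilde L)$ and $\tau>1$, I would first reformulate \eqref{idangleconstraint} as the pointwise gradient conditions
\[
    v^\top \nabla y(x)\, v \geq \tilde l\,|v|^2,\qquad |\nabla y(x)\,v|\leq \tilde L\,|v|, \qquad \text{for a.e.\ } x\in\Omega,
\]
which are equivalent to \eqref{idangleconstraint} itself via integration along line segments, provided $\varrho$ is sufficiently small compared to the Lipschitz character of $\partial\Omega$ so that short segments between points of $\Omega$ stay in $\Omega$.

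The heart of the proof is to build a smooth approximation $\tilde y_\delta\in C^\infty(\bar\Omega;\RR^d)$ of $y$ still satisfying the gradient conditions with constants $\tilde l_\delta:=(1-\eta(\delta))\tilde l$ and $\tilde L_\delta:=(1+\eta(\delta))\tilde L$, where $\eta(\delta)\to 0$, and with $\tilde y_\delta\to y$ in $W^{1,2}$. The construction I have in mind is: (a) extend $y$ to a Lipschitz map on $\RR^d$ preserving the upper constant $\tilde L$ via a Kirszbraun-type extension; (b) mollify with $\phi_\delta$, observing that at points far enough inside $\Omega$ the mollified gradient is a convex average of the original gradient, so both the PSD lower bound and the operator-norm upper bound are inherited; and (c) compose with the domain-shrinking diffeomorphisms $\Psi_{j(\delta)}$ from Lemma~\ref{lem:shrink}, chosen so that only these interior values are used throughout $\bar\Omega$. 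Convergence in $W^{1,2}$ then follows from standard properties of mollification together with Lemma~\ref{lem:shrinkcont}.

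Next, I would set $y_{h,\delta}:=I_h\tilde y_\delta$, the piecewise affine nodal interpolant on $\mathcal{T}_h$. The classical interpolation estimate on shape-regular meshes gives
\[
    \|\nabla(\tilde y_\delta - I_h\tilde y_\delta)\|_{L^\infty(\Omega)}\leq C(\gamma_0)\,h\,\|D^2\tilde y_\delta\|_{L^\infty(\Omega)},
\]
so for $h$ sufficiently small relative to $\delta$, the gradient error on every simplex is arbitrarily small in $L^\infty$. Consequently the (piecewise constant) gradient of $I_h\tilde y_\delta$ satisfies the pointwise PSD lower bound and operator-norm upper bound with constants $(\tilde l/\tau,\tilde L\tau)$ on every simplex, and integration along segments again recovers \eqref{idangleconstraint} for $I_h\tilde y_\delta$ with those constants. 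A diagonal argument $\delta=\delta(h)\to 0$ (slow enough with respect to $h$) then yields the sought sequence $y_h:=y_{h,\delta(h)}\in Y_h\cap\cY(\varrho;\tilde l/\tau,\tilde L\tau)$ with $y_h\to y$ strongly in $W^{1,2}$.

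The main obstacle I anticipate is the smoothing step: I need mollification to preserve the pointwise PSD condition with only an arbitrarily small loss, which is delicate near $\partial\Omega$ because naive mollification would sample extension values where the PSD condition may fail. Combining mollification with the domain-shrinking maps $\Psi_j$, effectively sacrificing a thin boundary layer that is recovered in the $W^{1,2}$-limit via Lemma~\ref{lem:shrinkcont}, is the key technical trick that makes everything go through. A secondary technicality is the reduction from the nonlocal condition \eqref{idangleconstraint} to the pointwise gradient condition on nonconvex $\Omega$; this is handled by taking $\varrho$ small enough that relevant segments remain inside $\Omega$, or otherwise by replacing segments with short interior chains at a controlled inflation of the constants, absorbed into the factor $\tau$.
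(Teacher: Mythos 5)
The overall architecture of your proposal -- mollify, compose with the domain-shrinking maps $\Psi_j$ to avoid boundary effects, interpolate with $P_1$ elements, and finish with a diagonal argument -- coincides with the paper's proof. The problem lies in the very first step: you replace the nonlocal constraint \eqref{idangleconstraint} by the pointwise gradient conditions
$v^\top \nabla y(x)\, v \geq \tilde l |v|^2$ and $|\nabla y(x)\, v|\leq \tilde L|v|$
and claim equivalence ``provided $\varrho$ is sufficiently small \ldots so that short segments between points of $\Omega$ stay in $\Omega$.'' This prerequisite cannot be arranged on nonconvex polygonal domains. Near a re-entrant corner of $\partial\Omega$, for \emph{every} $\varrho>0$ there are pairs $x_1,x_2\in\Omega$ with $|x_1-x_2|<\varrho$ whose connecting segment leaves $\Omega$; the proposition is stated for arbitrary polygonal $\Omega$, and the paper's own benchmark (the pincers domain) is emphatically nonconvex. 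For such pairs, the pointwise gradient conditions are strictly weaker than \eqref{idangleconstraint}: they say nothing about the sign of $(y(x_1)-y(x_2))\cdot(x_1-x_2)$ across the corner, which is precisely the extra boundary control the paper motivates \eqref{idangleconstraint} with (``it also prevents local self-intersections near the boundary that one could otherwise create by closing outer angles''). Your fallback -- short interior chains with a constant inflation absorbed into $\tau$ -- does rescue the Lipschitz \emph{upper} bound but not the angle \emph{lower} bound: telescoping $\sum_i (y(z_{i+1})-y(z_i))\cdot(z_{i+1}-z_i)\geq \tilde l\sum_i|z_{i+1}-z_i|^2$ along a chain $x_1=z_0,\ldots,z_n=x_2$ does not reassemble into a lower bound for $(y(x_1)-y(x_2))\cdot(x_1-x_2)$, since the increments point in different directions.

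The paper avoids the reduction to a gradient condition altogether and carries the nonlocal inequality through every step. For mollification, the key is to rewrite the bilinear form after convolution,
\[
[(\varphi_r*y)(\xi_1)-(\varphi_r*y)(\xi_2)]\cdot(\xi_1-\xi_2)
=\int_{B_r(0)}\varphi_r(z)\big(y(z+\xi_1)-y(z+\xi_2)\big)\cdot(\xi_1-\xi_2)\,\dz,
\]
and apply \eqref{idangleconstraint} for $y$ to the translated pair $(z+\xi_1,z+\xi_2)\in\Omega\times\Omega$ (valid when $\dist{\xi_i}{\partial\Omega}\geq r$, which the composition with $\Psi_j$ guarantees); this gives the nonlocal lower bound $\geq \tilde l\,|\xi_1-\xi_2|^2$ for \emph{all} admissible pairs, convex or not. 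Incidentally, your Kirszbraun extension is then unnecessary: $(\varphi_r*y)\circ\Psi_j$ is well defined on $\overline\Omega$ once $r<\Dist{\Psi_j(\Omega)}{\partial\Omega}$, without any extension of $y$. For the interpolation step, the condition is transferred from the smooth $\hat y$ to $I_h[\hat y]$ not by integrating a pointwise gradient bound but via the additive $C^1$-closeness $\|\nabla I_h[\hat y]-\nabla\hat y\|_{L^\infty}\leq h\|D^2\hat y\|_{L^\infty}$, which yields $|I_h[\hat y](x_1)-I_h[\hat y](x_2)-(\hat y(x_1)-\hat y(x_2))|\leq C h |x_1-x_2|\,\|D^2\hat y\|_{L^\infty}$ (using an interior chain only to control the $C^0$ increment, which \emph{is} legitimate for an additive error) -- an error absorbed into the slack between the intermediate constants $\hat m,\hat M$ and the target $m=\tilde l/\tau$, $M=\tilde L\tau$. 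If you rework your Step (b) to propagate the nonlocal inequality directly as above, the rest of your outline goes through.
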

\begin{proof}
To simplify notation, we will write the proof only for the case $l=\tilde{l}$ and $L=\tilde{L}$.
Let $y\in \cY(\varrho;l,L)$, fix $\tau>1$ and abbreviate
\[
	\text{$m:=\frac{l}{\tau}<l$, $M:=L\tau>L$.}
\]
We suffices  to show that for every $\eps>0$
and all $h\in H$ sufficiently small (depending on $\eps$ and $y$), there exists 
$y_h\in Y_h\cap \cY(\varrho;m,M)$ such that
\begin{align}\label{pcFE-0}
	\norm{y-y_h}_{W^{1,2}(\Omega;\RR^d)}<2\eps.
\end{align}
Here, recall that $\cY(\varrho;m,M)$ is the set of all $y\in W^{1,2}(\Omega;\RR^d)$ such that 
$\det\nabla y>0$ a.e. and \eqref{idangleconstraint} holds with $m,M$ instead of $l,L$. 
The latter requires that $y$ is $\varrho$-locally Lipschitz with constant $M$ and 
satisfies the angle condition
\begin{align}\label{pcFE-ac}
\bald
	(y(x_1)-y(x_2))\cdot (x_1-x_2)\geq m\abs{x_1-x_2}^2 & \\
	\quad\text{for all $x_1,x_2\in\Omega$ with $\abs{x_1-x_2}<\varrho$}&.
\eald
\end{align}
The approximating maps are constructed in two consecutive steps, mollification and interpolation. The former is a bit more subtle than usual, because we lack extension results that would preserve \eqref{pcFE-ac}. So we use domain shrinking instead to avoid troubles near the boundary. 

\noindent{\bf Step 1: Mollification.}\\
Choose a family of standard mollifying kernels $\varphi_r$, $r>0$, i.e.,
$\varphi_r\in C_c^\infty(B_r(0);[0,\infty))$, $\varphi_r(rz)=r^{-d}\varphi_1(z)$
with $\int \varphi_1=1$, let $*$ the convolution operator.
With the domain shrinking maps $\Psi_j$ of Lemma~\ref{lem:shrink} and Lemma~\ref{lem:shrinkcont},
$(\varphi_r*y)\circ \Psi_j$ is well defined on all of $\Omega$ for $r$ small enough, more precisely, $r<\Dist{\Psi_j(\Omega)}{\partial\Omega}$.

We define
\[
	\hat{y}=\hat{y}_{r,j}:=(\varphi_r*y)\circ \Psi_j\in C^2(\overline\Omega;\RR^d),
\]
We claim that for suitable $r$ (small enough) and $j$ (big enough),
we have that
\begin{align}\label{pcFE-1}
	\norm{y-\hat y}_{W^{1,2}(\Omega;\RR^d)}<\eps\quad\text{and}\quad
	\hat y\in\cY(\varrho;\hat{m},\hat{M}),
\end{align}
where 
\[
	\text{$\hat{\tau}:=1+\frac{\tau-1}{2}$, $\hat{m}:=\frac{l}{\hat{\tau}}$ and $\hat{M}:=L\hat{\tau}$.}
\]
Notice that $l>\hat{m}>m$ and $L<\hat{M}<M$.

The bound for $\norm{y-\hat y}_{W^{1,2}}$ in \eqref{pcFE-1} follow for large enough $j$ from the properties of $\Psi_j$ obtained in
Lemma~\ref{lem:shrink} and Lemma~\ref{lem:shrinkcont}. It remains to show that $\hat y\in\cY(\varrho;\hat{m},\hat{M})$,
i.e., that the upper and lower bounds of \eqref{idangleconstraint} hold for $\hat y$ with the constants $\hat{m},\hat{M}$ instead of $l,L$, and that
$\det\nabla \hat y>0$ a.e..

{\bf Upper bound of \eqref{idangleconstraint}:} Since $y$ is $\varrho$-locally $L$-Lipschitz and $\hat{M}>L$, 
$\hat{y}$ is $\varrho$-locally $\tilde{M}$-Lipschitz as long as $r$ and $j$ are small and big enough, respectively. 

{\bf Lower bound of \eqref{idangleconstraint}:} We have to show that
\begin{align}\label{pcFE-ac2}
\bald
	(\hat{y}(x_1)-\hat{y}(x_2))\cdot (x_1-x_2)\geq \hat{m}\abs{x_1-x_2}^2&\\
	\text{for all $x_1,x_2\in\Omega$ with $\abs{x_1-x_2}<\varrho$}&.
\eald
\end{align}
In view of the definition of $\hat{y}$, this amounts to
\begin{align}\label{pcFE-1-0}
\bald{}
	[(\varphi_r * y)(\Psi_j(x_1))-(\varphi_r * y)(\Psi_j(x_2))]
	\cdot (x_1-x_2)
	\geq \hat{m}\abs{\xi_1-\xi_2}^2.
\eald
\end{align}
for $x_1,x_2\in \Omega$ with $\abs{x_1-x_2}<\varrho$.
For every $\xi_1,\xi_2\in \Omega$ with $\dist{\xi_i}{\partial\Omega}\geq r$ and $\abs{\xi_1-\xi_2}<\varrho$,
\eqref{pcFE-ac} implies that
\begin{align}\label{pcFE-1-1}
\bald
	&[(\varphi_r*y)(\xi_1)-(\varphi_r*y)(\xi_2)]\cdot (\xi_1-\xi_2)\\
	&=\int_{B_r(0)} \varphi_r(z)\big(y(z+\xi_1)-y(z+\xi_2)\big)\cdot 
	(\xi_1+z-(\xi_2+z))\, \dz\\
	&\geq l\int_{B_r(0)}\varphi_r(z) \abs{\xi_1+z-(\xi_2+z)}^2\\
	&= l\abs{\xi_1-\xi_2}^2 \, \dz
\eald
\end{align}
We now set $\xi_i=\xi_i(j):=\Psi_j(x_i)$, $i=1,2$, in \eqref{pcFE-1-1}. In addition,
we can replace all differences $\xi_1-\xi_2$ occurring in \eqref{pcFE-1-1}
by $x_1-x_2$ with small enough error to obtain \eqref{pcFE-1-0}. Here, the gap between $l$ and the smaller $\hat{m}$ can be used to absorb the error
for big enough $j$,
since
\begin{align}\label{pcFE-1-2}
\abs{\xi_1(j)-\xi_2(j)-(x_1-x_2)}\leq \operatorname{Lip}(\Psi_j-\identity)\abs{x_1-x_2}. 
\end{align}
Above, $\operatorname{Lip}(\Psi_j-\identity)$ denotes the global Lipschitz constant of $\Psi_j-\identity$ which converges to zero as $j\to\infty$.

{\bf $\mbf{\det\nabla \hat y>0}$ a.e.:} We know that $\det\nabla \hat y\geq \hat{m}^d>0$ in $\Omega$. 
as a consequence of \eqref{pcFE-ac2}.

\noindent{\bf Step 2: Piecewise affine interpolation.}\\
For a simplex $T=\operatorname{co}\{z_i\mid i=0,\ldots,d\}\subset \RR^d$ and a function $f:\overline{\Omega}\to\RR^d$, we define the affine interpolation $I[f]$ of $f$ on $T$ as the unique affine function coinciding with $f$ on all $(d+1)$ corners $z_i$ of $T$.
For any given triangulation of $\Omega$ into simplices, $I[f]$ is defined piecewise on each simplex of the triangulation,
which gives a continuous, piecewise affine function on $\overline{\Omega}$.

Since the function $\hat{y}$ obtained in the previous step is of class $C^2$ up to the boundary
and $\diam T\leq h$ for each simplex $T$, it is easy to see that 
\begin{align}\label{paff-approx-error}
	\norm{I_h[\hat{y}]-\hat{y}}_{C^1}\leq h \normn{\hat{y}}_{C^2}\underset{h\to 0}{\To} 0,
\end{align}
where $I_h$ denotes the piecewise affine interpolation with respect to the triangulation associated to $Y_h$.

In addition, $I_h$ preserves the local Lipschitz constant of $\hat{y}$, possibly up to a small change of the constant: 
For $x_1,x_2\inf \Omega$ with $\abs{x_1-x_2}<\varrho$, choose simplices $T_i\ni x_i$ of the triangulation of $\Omega$ associated to $Y_h$.
If $\diam(T_1\cup T_2)<\varrho$, the bound 
\[
	\abs{I_h[\hat{y}](x_1)-I_h[\hat{y}](x_2)}\leq (\hat{M}+h)\abs{x_1-x_2}
\]
is inherited from $\hat{y}$ using \eqref{paff-approx-error}. Otherwise,
there exists at least two nodes $z_1,z_2$ of the grid (corners of some simplex belonging to the triangulation, so that
$I_h[\hat{y}](z_i)=\hat{y}(z_i)$) such that
$\abs{z_1-z_2}<\varrho$ and
$\abs{z_i-x_i}\leq 2h$, $i=1,2$. Assuming that $2h<\varrho$, by the triangle inequality 
we see that
\[
\bald
	&\abs{I_h[\hat{y}](x_1)-I_h[\hat{y}](x_1)} \\
	&\leq \abs{I_h[\hat{y}](x_1)-I_h[\hat{y}](z_1)}+\abs{I_h[\hat{y}](x_2)-I_h[\hat{y}](z_2)}+
	\abs{\hat{y}(z_1)-\hat{y}(z_2)} \\
	&\leq 4\hat{M}h+\hat{M}\abs{x_1-x_2}\leq \Big(\frac{4\hat{M}h}{\varrho-2h}+\hat{M}\Big)\abs{x_1-x_2}.
\eald
\]
Here, for the last inequality, we used that this scenario can only occur if $\abs{x_1-x_2}\geq \varrho-2h$.

Next, we show that \eqref{pcFE-ac} holds for $I_h[\hat{y}]$ instead of $y$.
Let $x_1,x_2\in\Omega$ with $\abs{x_1-x_2}<\varrho$, 
contained in simplices $T_1,T_2$ of the triangulation of $Y_h$.
By \eqref{paff-approx-error}, we in particular have that
\begin{align}\label{pcFE-2-1}
	\norm{\nabla I_h[\hat{y}]-\nabla\hat{y}}_{L^\infty}\leq h\norm{D^2 \hat{y}}_{L^\infty},
\end{align}
and consequently, 
by the mean value theorem,
\begin{align}\label{pcFE-2-2}
	|I_h[\hat{y}](x_1)-I_h[\hat{y}](x_2)-(\hat{y}(x_1)-\hat{y}(x_2))|\leq 2 h |x_1-x_2| \norm{D^2 \hat{y}}_{L^\infty}.
\end{align}
With \eqref{pcFE-2-2}, \eqref{pcFE-ac} for $I_h[\hat{y}]$ thus follows from \eqref{pcFE-ac2} if $h$ is small enough.
Finally, \eqref{pcFE-ac} for $I_h[\hat{y}]$ implies a positive sign of $\det\nabla I_h[\hat{y}]$ as in Step 1.
\end{proof}

\section{Numerical experiments\label{sec:num}}

\subsection{A few useful explicit formulas}

\begin{ex}[A test case to check implementations of $E^{\partial\Omega}_{\epsp}$]
We replace $\partial\Omega$ by two line segments in $\RR^2$ (no longer the boundary of a domain, but this is irrelevant for the computation):
\[
	"\partial\Omega":=([0,1]\times \{1\})\cup ([0,1]\times \{0\})\subset \RR^2,
\]
Two parameters $a,b\geq 0$ determine the deformation $y$ we are interested in: $a$ causes a horizontal shift of the upper line segment, $b$ the distance of the lines after deformation.
For $x=(x_1,x_2)\in "\partial\Omega"$, we set
\[
	y(x)=(y_1(x),y_2(x)):=(x_1,bx_2)+\left\{\begin{array}{ll} (a,0) \quad & \text{if $x_2=1$},\\
	(0,0) \quad & \text{if $x_2=0$.}
	\end{array}\right.
\]
For $\epsp<1$, 
$E^{\partial\Omega}_{\epsp}(y)$ can now be calculated as follows:
\[
\bald
	\epsp^{\beta+d-1}E^{\partial\Omega}_{\epsp}(y)
	&=\int_0^1 \int_0^1 H_{\epsp}(t,t-s) \, \ds\, \dt \\
	&=\int_0^1 \int_{\max\{t-1,\gamma-\delta\}}^{\min\{t,\gamma+\delta\}} H_{\epsp}(t,r) \, \dr\, \dt.
\eald
\]
Here, we used the integrand
\[
	H_{\epsp}(t,r):=P\left(g\big(\sqrt{r^2+1}\big)-g\Big(\frac{1}{\epsp}\sqrt{b^2+(r-a)^2}\Big)\right)
\]
and the associated constants
\[
	\gamma:=\frac{a}{\sqrt{1-\epsp^2}},\quad \delta:=\left\{\baldat{2}
		& \sqrt{\frac{\epsp^2-b^2}{1-\epsp^2}} \quad &&\text{if $\epsp\geq b$,}\\
		& 0 \quad &&\text{else.}
	\ealdat\right.
\] 
Notice that $\gamma$ and $\delta$ were defined in such a way that
\[ 
	\text{$H_{\epsp}(t,r)>0\quad$
	if and only if $\quad \gamma-\delta<r<\gamma+\delta$},
\]
(provided that $P(\tau)>0$ if and only if $\tau>0$, 
in addition to \eqref{sCN1b} and \eqref{sCN1c}). For instance, in the special case $P(\cdot)=[\cdot]^+$ and
$g=\identity$, we get that
\[
\bald
	\epsp^{\beta+d-1}E^{\partial\Omega}_{\epsp}(y)
	&=\int_0^1 \int_{\max\{t-1,\gamma-\delta\}}^{\min\{t,\gamma+\delta\}}
	\Big(\sqrt{r^2+1}-\frac{1}{\epsp}\sqrt{b^2+(r-a)^2}\Big)
	\, \dr\, \dt,
\eald
\]
which now an be evaluated using standard software like Mathematica.
\end{ex}

\subsection{Simulations in 3D for linear elasticity}
We are motivated by 2D simulations of \cite{KroeVa19a} assuming the bulk version of penalization \eqref{sCN1} and perform 3D energy minimization evaluations with surface penalty \eqref{sCN2}.  Therefore, we consider $d=3$ and the approximate deformation $$y = (y_1, y_2, y_3) \in W^{1,s}(\Omega;\RR^3)$$ is searched for as the (ideally global) minimizer of the 
\begin{align}
	E_{\epsp,\mu}(y)=\Ele(y) - E^{body}(y)
	+\mu_{\partial \Omega} E^{\partial \Omega}_{\epsp}(y)
	 \label{energy:total}
\end{align}
over a finite-dimensional space. We stay within the linear elastic model of Subsection~\ref{ssec:LE}, with the elastic energy \eqref{energy_elastic}.
The local constraint \eqref{idangleconstraint} will not be enforced during the computation. 
In fact, with the forces and boundary conditions we use, there is no real incentive to violate \eqref{idangleconstraint} for, say, $l=\frac{1}{2}$ and $L=2$ (and some small $\varrho>0$). In any case, \eqref{idangleconstraint} could still be checked a posteriori.

The three components of the deformation $y_1, y_2, y_3$ are discretized using the finite element method in the space of $P_1$ tetrahedral elements, i.e., linear and globally continuous functions. 

\subsubsection{Implementation details}
We assume the following sets of nodes of the tetrahedral mesh: 
\begin{itemize}[itemsep=2pt, topsep=2pt, partopsep=0pt,leftmargin=10pt]
\item[] $\mathcal{N}$ - the set of all nodes, 
\item[] $\mathcal{N}_{D}$ - the set of nodes corresponding to the Dirichlet boundary condition,
\item[] $\mathcal{N}_{NP}$ - the subset of surface nodes expected to contribute to the nonpenetration penalty term $E^{\partial \Omega}_{\epsp}(y)$. 
\end{itemize}
The set $\mathcal{N}_{NP}$ is defined a priori in computations and it holds 
$$|\mathcal{N}_{NP}| \ll |\mathcal{N}|.$$ 
The assumption of linear elasticity density \eqref{density} allows an efficient evaluation of energies $\Ele(y) - E^{body}(y) $. Instead of displacement $y$, we work with displacement 
$u(x)=y(x)-x, x \in \Omega$ approximated by the finite element method as 
\[ 
u(x) \approx u_{h}(x) = \sum_{j=1}^{3|\mathcal{N}|} \left(\mathbf{u}\right)_j \phi_j(x)
\]
with a vector $\mathbf{u} \in \mathbb{R}^{3|\mathcal{N}|}$ of degrees of freedom and finite element basis functions $\phi_j, j=1,\dots, 3|\mathcal{N}|$. Then we have approximated
\begin{equation}\label{quadratic_energy}
 \Ele(y) - E^{body}(y) \approx  \frac{1}{2} \,\mathbf{u}^T K \mathbf{u} - \mathbf{b}^T \mathbf{u},
\end{equation}
where
$K \in \mathbb{R}^{3|\mathcal{N}| \times 3|\mathcal{N}|}$ is a stiffness matrix and $\mathbf{b} \in \mathbb{R}^{3|\mathcal{N}|}$ is a loading vector. The vector $\mathbf{u}$ is further decomposed into three disjoint parts
$ \mathbf{u} = (\mathbf{u}_{D}, \mathbf{u}_{NP}, \mathbf{u}_{R}),$ where 
\begin{itemize}[itemsep=2pt, topsep=2pt, partopsep=0pt,leftmargin=10pt]
\item[] $\mathbf{u}_{D}$ is the vector of (prescribed) displacements in nodes $\mathcal{N}_{D}$,
\item[] $\mathbf{u}_{NP}$ is the vector of displacements in nodes $\mathcal{N}_{NP}$,
\item[] $\mathbf{u}_{R}$ is the vector of displacements in the remaining nodes. 
\end{itemize}
If $\mathbf{u}_{D}=0$ 
, the optimality conditions for $\frac{1}{2} \,\mathbf{u}^T K \mathbf{u} - \mathbf{b}^T \mathbf{u} \rightarrow \mbox{min }$yield a relation between the vectors $\mathbf{u}_{NP}$ and $\mathbf{u}_{R}$ in the form of the linear system of equations 
\begin{equation}
\begin{pmatrix}
K_{NP,NP} & K_{NP,R}\\
K_{R, NP} & K_{R,R}
\end{pmatrix}    
\begin{pmatrix}
\mathbf{u}_{NP} \\
\mathbf{u}_{R} 
\end{pmatrix}    =
\begin{pmatrix}
\mathbf{b}_{NP} \\
\mathbf{b}_{R} 
\end{pmatrix}
\end{equation}
with $K$ and $\mathbf{b}$ decomposed to its parts given above. 
Then the well-known Schur complement technique of linear algebra provides 
\begin{equation}\label{schur}
\mathbf{u}_{R} = K_{R,R}^{-1} (\mathbf{b}_{R} - K_{R, NP} \, \mathbf{u}_{NP}).    
\end{equation}
The relation \eqref{schur}, together with the Dirichlet condition $\mathbf{u}_{D}=0$, allows us to express the quadratic energy \eqref{quadratic_energy} in reduced form featuring the vector $\mathbf{u}_{NP}$ as the only argument:
\begin{equation}\label{energy_np}
  \hat E(\mathbf{u}_{NP})=\frac{1}{2} \,\mathbf{u}_{NP}^T S \mathbf{u}_{NP} - 
  \mathbf{\hat b}^T \mathbf{u}_{NP}-\hat c,
\end{equation}
with the quadratic, linear and constant contributions given by
\[
\begin{alignedat}{2}
    &S&&:=
    K_{NP,NP}-K_{R, NP}^T K_{R,R}^{-1}K_{R,NP},\\
    &\mathbf{\hat b}^T&&:=\mathbf{b}_{NP}^T
    - \mathbf{b}_{R}^T K_{R,R}^{-1} K_{R, NP}
    +\mathbf{b}_{R}^T K_{R, NP},\\
    &\hat c&&:=
    \frac{1}{2}\mathbf{b}_{R}^T K_{R,R}^{-1} \mathbf{b}_{R}.
\end{alignedat}
\]
Here, we used that $K_{R, NP}=K_{NP,R}^T$ and $K_{R,R}^{-1}=K_{R,R}^{-T}$ due to the symmetry of $K$. When looking for minimizers or critical points, the constant contribution $\hat{c}$ can of course be ignored.

As long as $\mathcal{N}_{NP}$ has been chosen well,
all surface nodes outside of $\mathcal{N}_{NP}$ remain far from any self-contact and
thus contribute nothing to the nonpenetration penalty term,
so that $E^{\partial \Omega}_{\epsp}(y)$ also only depends $\mathbf{u}_{NP}$, say,
\[
   E^{\partial \Omega}_{\epsp}(y)= \hat{E}^{\partial \Omega}_{\epsp}(\mathbf{u}_{NP}).
\]
The total energy \eqref{energy:total} then can be likewise expressed as a function of $\mathbf{u}_{NP} \in \mathbb{R}^{3|\mathcal{N}_{NP}|}$ only. This reduces the number of degrees of freedom from $3(|\mathcal{N}|-|\mathcal{N}_{D}|)$   to $3|\mathcal{N}_{NP}|$ at the cost of precomputations of the terms in \eqref{energy_np}. 

Besides this reduction, the matrix $S$ appearing in the leading part of the reduced elastic and potential energy $\hat E$ offers us an easy way of preconditioning the problem. To this end,
we use the following transformation of the reduced total energy in the computations:
\[
    E_{\text{precond}}(\mathbf{v}_{NP}):=\hat E\big(S^{-\frac{1}{2}}\mathbf{v}_{NP}\big)
    + \hat{E}^{\partial \Omega}_{\epsp}\big(S^{-\frac{1}{2}}\mathbf{v}_{NP}\big).
\]
Thus we have replaced the original variable $\mathbf{u}_{NP}$ by $\mathbf{v}_{NP}=S^{\frac{1}{2}}\mathbf{u}_{NP}$. Here, recall that as a symmetric positive definite matrix (corresponding to the coercivity of the linear elastic energy), $S$ has a well-defined square root that for instance can be computed by Cholesky decomposition. 
\begin{rem}
Heuristically, our preconditioning means
that we choose $L^2$ as the natural energy space for the piecewise affine boundary elements
represented by the variable $\mathbf{v}_{NP}$ in our computations. By contrast, the natural space for (piecewise affine interpolations of) $\mathbf{u}_{NP}$ would be the trace space $H^{\frac{1}{2}}$ on the boundary piece of $\Omega$ corresponding to the nodes in $\mathcal{N}_{NP}$. Here, notice that formally, $S$ is a discretization of a symmetric and invertible nonlocal pseudo-differential operator of order $1$
defining a continuous and coercive bilinear form on $H^{\frac{1}{2}}$.
Its condition number and largest eigenvalue are expected to scale like $h^{-1}$ with respect to the grid size $h$.
\end{rem}
\begin{rem}
The interaction of the preconditioning with the penalty energy $\hat{E}^{\partial \Omega}_{\epsp}$ is not so obvious. In that regard, our choice is vaguely inspired by the tests conducted in \cite[Fig.~10]{YuBraSchuKee21a}
(for a quite different nonlocal interaction requiring higher regularity, though), where 
the use of $H^{s}$-gradient flows turned out to be favorable. In our notation, this corresponds to preconditioning by a (discretized) fractional Laplacian of order $\frac{s}{2}$.
However, unlike in \cite{YuBraSchuKee21a}, our nonlocal penalty term is well-defined even in $L^2$ and so does not suggest a natural choice for $s$. For this reason,
we proceeded with the precondition best matching the \say{leading} term with respect to local regularity, i.e., the linear elasticity energy. This suggests the
use of the square root of the matrix $S$. As its inverse has a smoothening effect, it is reasonable to expect that it stabilizes the contributions of $\hat{E}^{\partial \Omega}_{\epsp}$ as well. 
\end{rem}

\subsubsection{Computational benchmark}
We assume a pincer-shaped domain $\Omega = \Omega_1 \cup \Omega_2 \cup \Omega_3 $   
which consists of block subdomains
\begin{eqnarray*}
&&\Omega_1 =  (0,6) \times (0,0.5) \times (2.5,3), \qquad \quad \, \, \,\mbox{(upper part)}\\
&&\Omega_2 = (0,0.5) \times (0,0.5) \times (0.5,2.5), \qquad \mbox{(middle part)} \\
&&\Omega_3 = (0,6) \times (0,0.5) \times (0,0.5), \qquad \quad \, \, \,\mbox{(lower part)}
\end{eqnarray*}

\begin{figure}[h]
\centering
\begin{subfigure}[b]{0.6\textwidth}
\centering
\includegraphics[width=\textwidth]{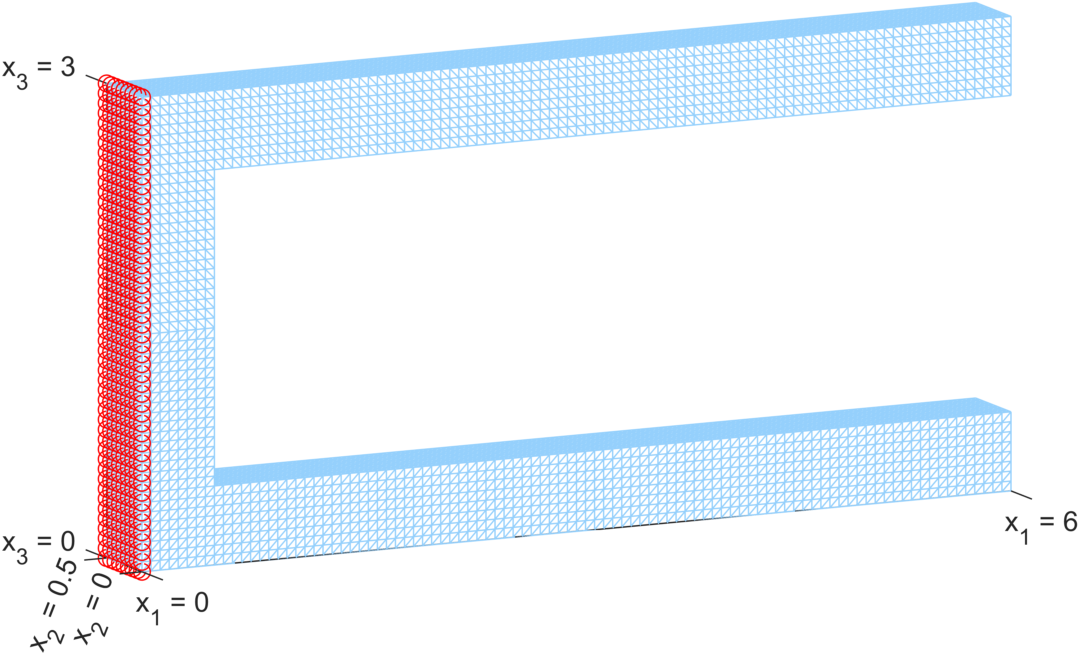} 
\caption{A nondeformed mesh. }
\label{fig:model_pincers3D}
\end{subfigure}
\centering
\begin{subfigure}[b]{0.6\textwidth}
\includegraphics[width=\textwidth]{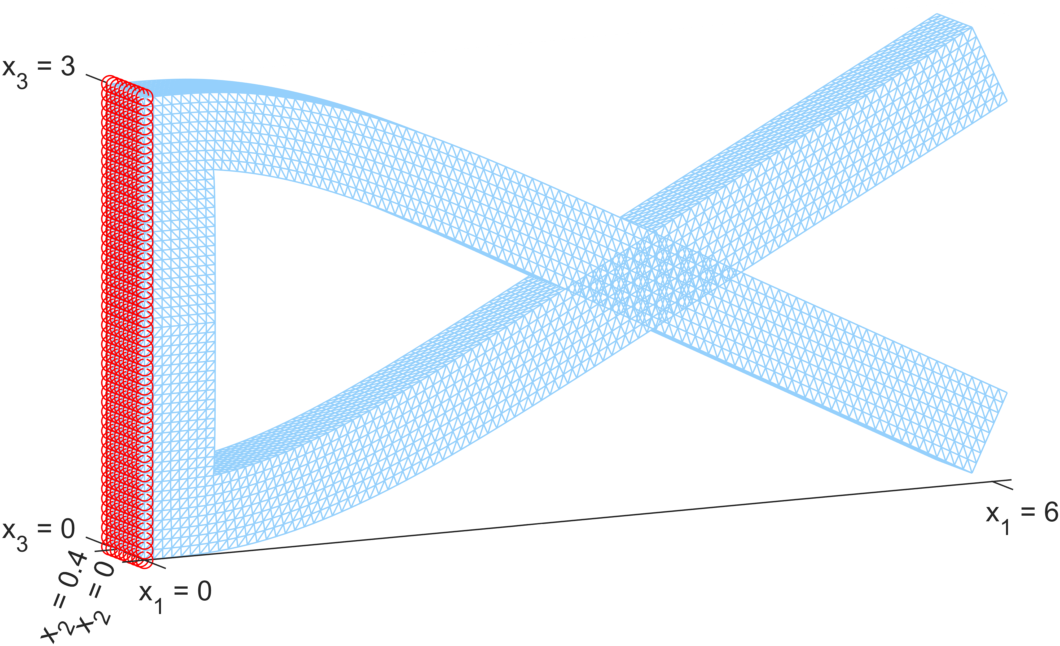} 
\caption{An elastically deformed mesh ($\mu=0$).
}
\label{fig:model_pincers3D_elasticity}
\end{subfigure}
\caption{Nondeformed and elastically deformed meshes (level 3) are discretized by tetrahedral elements. The red nodes indicate Dirichlet nodes $\mathcal{N}_{D}$ at which zero displacements are required.}
\end{figure}

The domain $\Omega$ is depicted in Figure \ref{fig:model_pincers3D} along with its tetrahedral triangulation and nodes corresponding to the homogeneous Dirichlet boundary conditions defined on a part of the domain boundary.
\begin{eqnarray*}
& y=0 \quad \mbox{for } x \in \Lambda_{D}= \{1 \} \times (0,0.5) \times (0.5,2.5).
\end{eqnarray*}
There is a linear body force term $E^{body}(y) $ with the energy contribution 
\[
	E^{body}(y):=\int_\Omega g_{\rm body}(x)\cdot (y(x)-x) \dx
\]
and a linear body force density
$$   g_{\rm body}(x_1, x_2, x_3) =g_{\rm load}  (0, 0, - H(x_1 - 2)H(x_1 - 4) \, \sign{(x_3 - 1.5)}) $$
on $(x_1, x_2, x_3) \in \Omega$,
where $H(\cdot)$ denotes the Heaviside step function, $\sign(\cdot)$ the signum function and $g_{\rm load}>0$ is a given loading parameter. This form of the linear body force density presses the tips of both pincer parts $\Omega_1$ and $\Omega_3$ against each other. \\

The surface penalty term $E^{\partial \Omega}_{\epsp}(y)$ is taken according to \eqref{sCN2} and assumes the choice 
\[ \beta=d-1+0.1=2.1, \qquad \epsp = s\,h/r, \]
where $s=3, r=2$ and $h$ is the grid size.
As to $P$ and $g$, we use $C^2$ functions with a fixed parameter $a:=0.01$ given by
\begin{align}\label{Pg-C2}
\begin{aligned}
    P(t)=g(t):=\left\{\begin{array}{ll}
        0 & \text{if $t<0$,}\\
        \frac{1}{a^2}t^3-\frac{1}{2a^3}t^4 & \text{if $0\leq t \leq a$,}\\
        t-\frac{a}{2} & \text{if $t>a$.}
    \end{array}\right.
\end{aligned}
\end{align}

For computational simulations, we consider the elastic material parameters - Young's modulus and Poisson's ratio
$$E = 2e8, \quad \nu = 0.3$$ 
 corresponding to the Lam\'e's parameters 
 $$\lambda = E \nu / (1+\nu)(1-2\nu) \approx 1.15e08, \quad \mu = E / (2(1+\nu)) \approx 7.69e07$$ 
 and the linear body force defined by $g_{\rm load}=4e05$. The solution of the purely linear elastic system with the nonpenetration penalty switched off (i.e., $\mu=0$) leads to the body interpenetration, see Figure \ref{fig:model_pincers3D_elasticity}. In order to prevent the interpenetration, the penalty term $\mu E^{\partial \Omega}_{\epsp}(y)$ must be switched on and we set $\mu_{\partial \Omega}=0.001 \,E$.

\begin{rem}
In our example, $r=2$ is precisely the distance of $\Omega_1$ and $\Omega_3$, the distance in reference configuration of the two inner pincer surfaces on opposing sides which will approach self-contact under the influence of the body force. As long as local deformations remain close to the identity, the above choice for $\epsp$ corresponds to an effective range of the penalty term of $s=3$ grid boxes on these contact surfaces. Here, recall that the penalty term is active at any pair of points $x_1,x_2$ iff $|y(x_1)-y(x_2)|<\epsp|x_1-x_2|$, 
and $|x_1-x_2| \approx r$ for the relevant material points where we expect self-contact in our example.
\end{rem}
\begin{rem}\label{rem:biLi-apost}
We do not enforce the local constraint \eqref{idangleconstraint} 
numerically,
and this also means that the local bi-Lischitz property
\eqref{biLi} and the local invertibility of deformations as required by our theoretical results are \emph{not} guaranteed. However, we can check \eqref{biLi} a posteriori.
In fact, it suffices to check the global behavior on the boundary and local invertibility on each element: For any $y\in W^{1,\infty}$ such that
$y$ is invertible on $\partial\Omega$ and $\det \nabla y>0$ a.e., 
$y$ is already a homeomorphism on $\overline{\Omega}$, as long as $\Omega$ is a Lipschitz domain without holes (i.e., $\RR^d\setminus \partial\Omega$ has only two connected components) \cite{Kroe20a}. 
If, in addition, $y$ is also continuous and piecewise affine (e.g.), \eqref{biLi} holds (with suitable constants) if and only if $\abs{\nabla y},\abs{(\nabla y)^{-1}}\in L^\infty$, where $\abs{\cdot}$ denotes the operator norm (modulus of the largest singular value, which never exceeded $1.2$ in our computations). Here, in the interior, one could actually choose $l^{-1}=\sup |\nabla y^{-1}|$, but smaller $l$ can still occur due to boundary effects if the deformation reduces angles from the outside between boundary elements. Notice that if $y$ is piecewise affine and invertible on the boundary, the formation of outer cusps in the deformed configuration 
(which would break \eqref{biLi}) is impossible.
\end{rem}

Depending on different initial deformations in the minimization of the total energy \eqref{energy:total} we discuss two numerical solutions.
The first is symmetric and probably corresponds to a local minimum. The second is asymmetric, and likely approximates one of two global minima related to each other by reflection.

\begin{figure} 
\centering
\begin{subfigure}[b]{0.8\textwidth}
\centering
\includegraphics[width=0.99\textwidth]{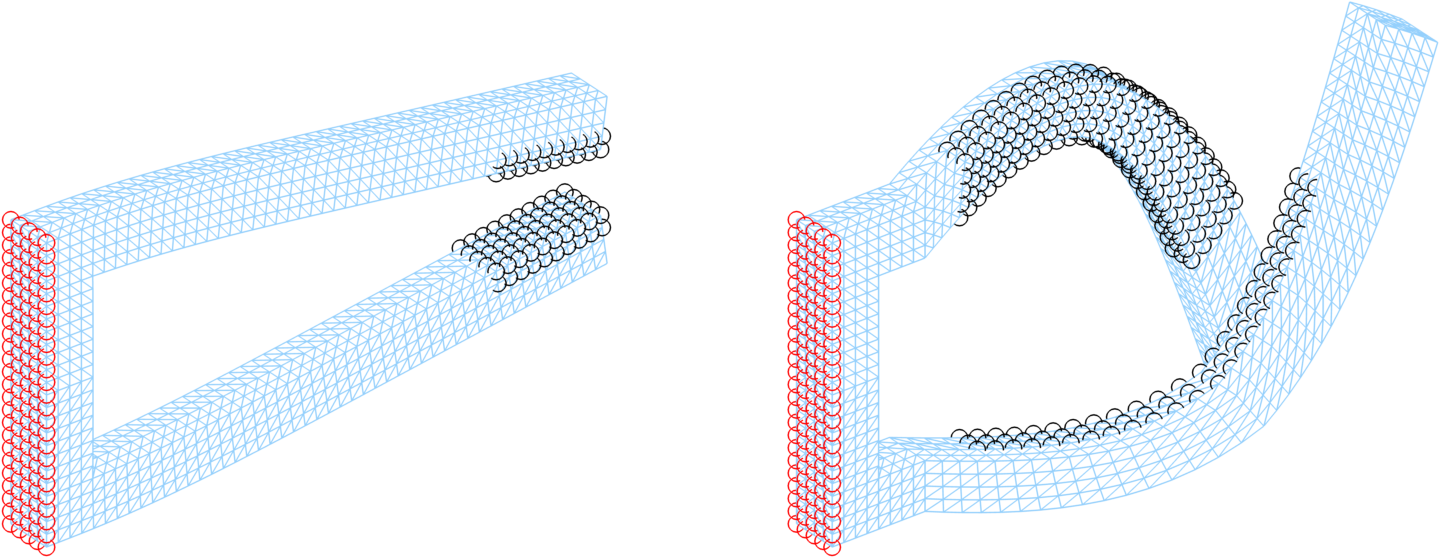}  
\caption{Initially deformed meshes.}
\label{fig:pincers-all-A}
\vspace{-0.5cm}
\end{subfigure}
\centering
\begin{subfigure}[b]{0.9\textwidth}
\includegraphics[width=0.99\textwidth]{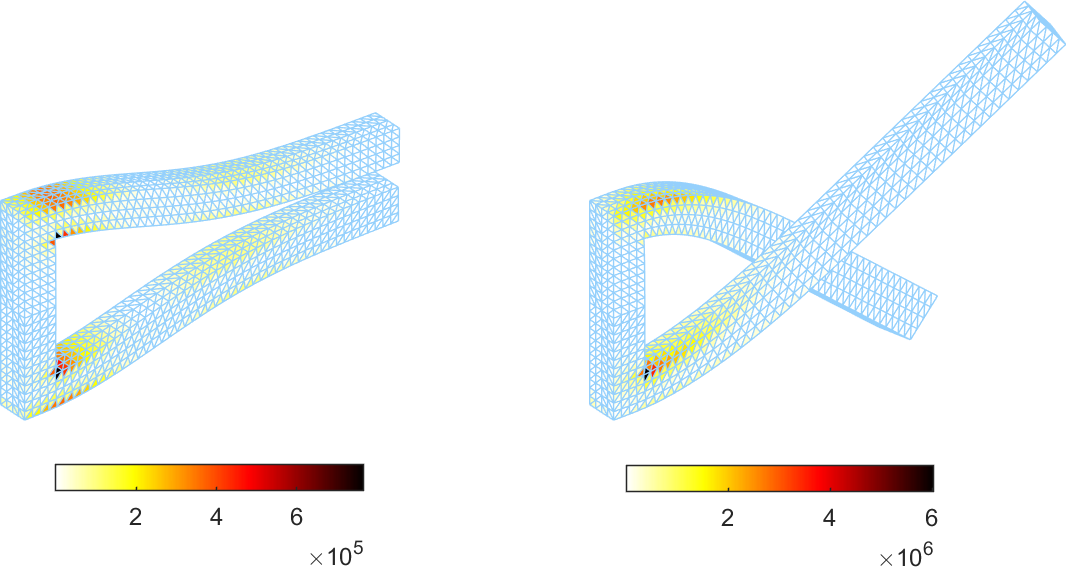}   
\caption{Deformed meshes with the underlying elastic densities.}
\label{fig:pincers-all-B}
\end{subfigure}
\centering
\begin{subfigure}[b]{0.9\textwidth}
\includegraphics[width=0.99\textwidth]{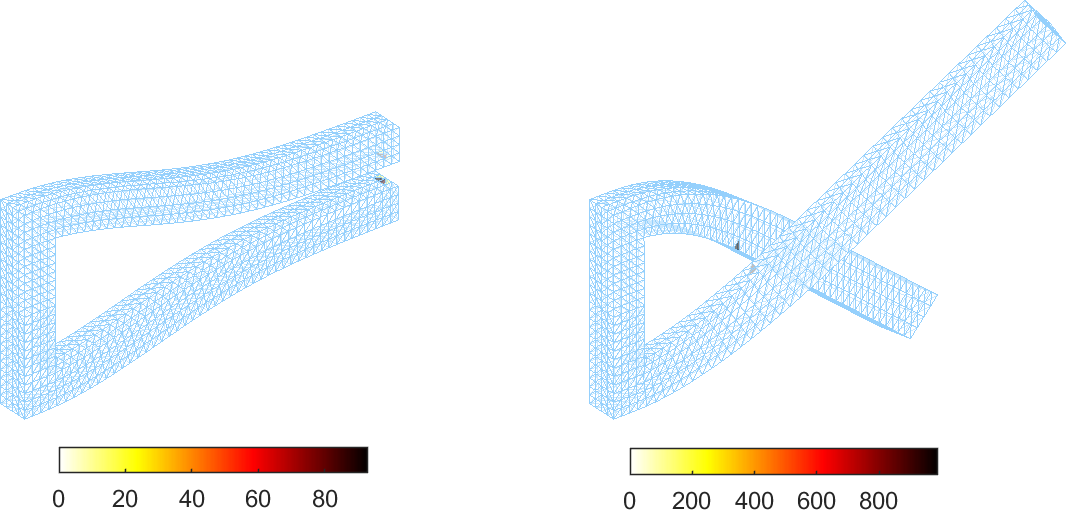}   
\caption{Deformed meshes with the underlying nonpenetration densities.
}
\label{fig:pincers-all-C}
\vspace{0.5cm}
\end{subfigure}
\begin{subfigure}[b]{0.9\textwidth}
\includegraphics[width=0.99\textwidth]{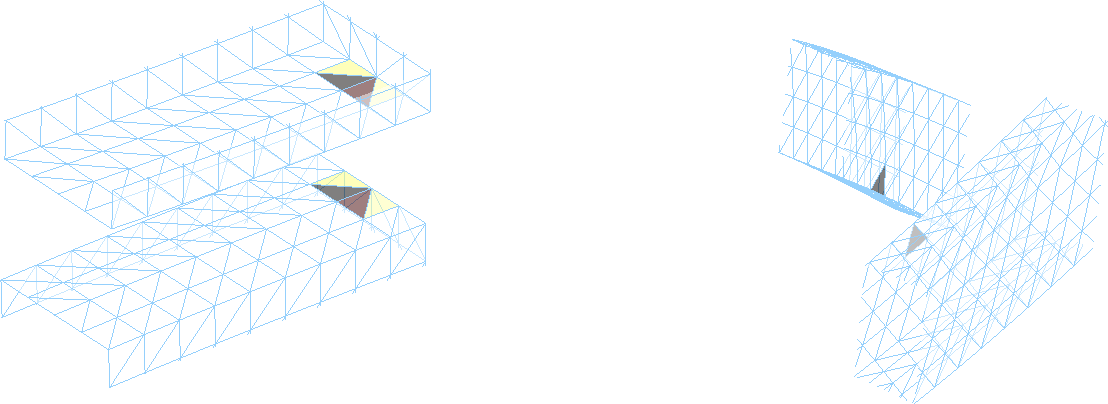}   
\caption{The above pictures zoomed in nonpenetration nodes.}
\label{fig:pincers-all-D}
\end{subfigure}
\caption{Solutions for level 2 mesh: 
the symmetric initial deformation (the left column) and the asymmetric initial deformation (the right column). }
\end{figure}

\subsubsection{Symmetric initial deformation}
The elastic deformation $y_{\rm elast}$ is evaluated by the solution of the purely linear elastic system and the initial deformation $y_{\rm init}$ is generated as
$$ y_{\rm init}(x) = x + 0.05 (y_{\rm elast}(x) - x), \qquad x \in \Omega.$$ The constant $0.05$ is chosen so that the initial deformation is out of self-contact, see the left part of Figure \ref{fig:pincers-all-A}.
The full minimization of  \eqref{energy:total} respecting the nonpenetration term $\mu=$ and taking $ y_{\rm init}(x)$ is the minimization procedure converges to the deformation displayed with linear elasticity and non-penetration density and its zoomed view in the left column of Figure
\ref{fig:pincers-all-B}, \ref{fig:pincers-all-C}, \ref{fig:pincers-all-D}. 
The detailed evaluation data is given in Table \ref{tab:symmetric}.

{\tiny
\begin{table}[b]
\centering
\begin{tabular}{ c|r | r| c | c | r | c | r |r } 
\multicolumn{3}{c|}{mesh}  &   \multicolumn{4}{c|}{energy} & \multicolumn{2}{c}{performance}\\
\hline
 lev. &  \multicolumn{2}{c|}{nodes:}  &total & elastic & nonpenet. & body & iters. & time \hspace{1em}  \\
 & $|\mathcal{N}|$ & $|\mathcal{N}_{NP}|$  & $E_{\epsp,\mu}(y)$ & $\Ele(y)$ & $\mu E^{\partial \Omega}_{\epsp}(y)$ & $E^{body}(y)$ & &(sec) \hspace{1em} \\ 
  \hline
1 & 513 & 0 \qquad & -3.88e+05 & 3.88e+05 &  0 \hspace{1em} & -7.76e+05 & 1 & 3.03e-03\\ 
 2 & 2825 & 0 \qquad & -5.38e+05 & 5.38e+05 &  0 \hspace{1em} & -1.08e+06 & 1 & 2.48e-02\\ 
 3 & 18225 & 0 \qquad & -6.03e+05 & 6.03e+05 &  0 \hspace{1em} & -1.21e+06 & 1 & 3.62e-01\\ 
 4 & 129761 & 0 \qquad & -6.19e+05 & 6.19e+05 &  0 \hspace{1em} & -1.24e+06 & 1 & 7.67e+00\\ 
\end{tabular}
\vspace{-0.2cm}
\caption{Purely elastic material.}
\label{tab:elastic}
\vspace{0.5cm}
\begin{tabular}{ c|r | r| c | c | r | c | r |r } 
\multicolumn{3}{c|}{mesh}  &   \multicolumn{4}{c|}{energy} & \multicolumn{2}{c}{performance}\\
\hline
 lev. &  \multicolumn{2}{c|}{nodes:}  &total & elastic & nonpenet. & body & iters. & time \hspace{1em}  \\
 & $|\mathcal{N}|$ & $|\mathcal{N}_{NP}|$  & $E_{\epsp,\mu}(y)$ & $\Ele(y)$ & $\mu E^{\partial \Omega}_{\epsp}(y)$ & $E^{body}(y)$ & &(sec) \hspace{1em} \\ 
  \hline
1 & 513 & 52 & -1.94e+05 & 9.16e+04 & 2.80e+01 & -2.86e+05 & 10 & 1.12e+00\\ 
 2 & 2825 & 146 & -2.52e+05 & 1.18e+05 & 7.42e+00 & -3.71e+05 & 18 & 1.98e+01\\ 
 3 & 18225 & 454 & -2.78e+05 & 1.28e+05 & 1.63e+00 & -4.06e+05 & 25 & 4.49e+02\\ 
\end{tabular}
\vspace{-0.2cm}
\caption{The initial symmetric deformation.}
\label{tab:symmetric}
\vspace{0.5cm}
\begin{tabular}{ c|r | r| c | c | r | c | r |r } 
\multicolumn{3}{c|}{mesh}  &   \multicolumn{4}{c|}{energy} & \multicolumn{2}{c}{performance}\\
\hline
 lev. &  \multicolumn{2}{c|}{nodes:}  &total & elastic & nonpenet. & body & iters. & time \hspace{1em}  \\
 & $|\mathcal{N}|$ & $|\mathcal{N}_{NP}|$  & $E_{\epsp,\mu}(y)$ & $\Ele(y)$ & $\mu E^{\partial \Omega}_{\epsp}(y)$ & $E^{body}(y)$ & &(sec) \hspace{1em} \\ 
  \hline
 1 & 513 & 130 & -2.70e+05 & 3.22e+05 & 3.79e+02 & -5.93e+05 & 14 & 6.99e+00\\ 
 2 & 2825 & 450 & -4.67e+05 & 4.86e+05 & 3.09e+01 & -9.52e+05 & 23 & 2.66e+02\\ 
\end{tabular}
\vspace{-0.2cm}
\caption{The initial asymmetric deformation.}
\label{tab:asymmetric}
\end{table}
}

\subsubsection{Asymmetric initial deformation}

Starting from a symmetric starting condition, the solver consistently terminated at a (almost) symmetric finale state where 
the two pincer ends are flatly pressed together. 
While this probably always is a local minimum, with sufficiently strong forces we expect to find a another candidate for the global minimum with less energy, 
a deformation where the pincer arms slide past each other. If this happens, there are obviously two such solutions that can be mapped into each other by reflection across the plane $\{x_2=0.25\}$, with the upper pincer (occupying $\Omega\cap \{x_3>1.5\}$ in its undeformed state) passing in front or in the back of the lower pincer.
To find such a deformation numerically, we artificially specify an explicitly defined starting deformation $y_0$ with the pincer arms passing each other. More specifically, we fix $X=(X_1,X_2,X_3):=(3,0.25,1.5)$, a point in the center between the two \say{arms} of $\Omega$, and introduce the (reflected) planar polar coordinates with respect to the center point $X$ given by
\[
\begin{aligned}
    &r=r(x_1,x_3):=\sqrt{(x_1-X_1)^2+(x_3-X_3)^2}>0,~~~\\
    &\theta=\theta(x_1,x_3):=-\sphericalangle\big((x_1-X_1,x_3-X_3),(-1,0)\big)\in (-\pi,\pi).
\end{aligned}
\]
Here, notice that the half-plane $X+\{(t,0,z)\mid t\geq 0,z\in\RR\}$ does not intersect $\Omega$. Consequently,
the angle $\theta$ is a well defined continuous 
extension of the function $-\arctan((x_3-X_3)/(x_1-X_1))$
from $\{x_1<X_1\}$ to $\Omega$. It satisfies $\theta(x_1,X_3)=0$ for all $x_1<X_1$, 
$\theta(x_1,x_3)<0$ for $x_3<X_3$ and $\theta(x_1,x_3)>0$ for $x_3>X_3$.
With the auxiliary \say{twist parameter} function
\[
    T=T(x_1):=\frac{1}{2}\min\{[x_1-X_1-0.5]^+,2\}
\]
we now define the starting deformation as
\[
    y_0(x):=\left(\begin{array}{r}
        -r \cos \big((1+0.2T)\theta\big)\\
        x_2+0.3T(x_3-X_3)\\
        r\sin \big((1+0.2T)\theta\big)
    \end{array}\right)
\]
A visualization of $y_0(\Omega)$ can be seen on the right in Figure~\ref{fig:pincers-all-A}. The optimal deformation displayed with linear elasticity and non-penetration density and its zoomed view on the right of Figure \ref{fig:pincers-all-B}, \ref{fig:pincers-all-C}, \ref{fig:pincers-all-D}. 
The detailed evaluation data is given in Table \ref{tab:asymmetric}. 

{\bf{Details on implementation and running times:}} Our MATLAB code is based on the FEM vectorization ideas of \cite{MoVa22a} combined with fast assembly routines of \cite{RaVa13a} for linear elasticity. Practical energy minimization is based on the first-order quasi-Newton method 
applied to $E_{\text{precond}}$, with an explicit differentiation behind and the construction of an approximate Hessian by the Broyden–Fletcher–Goldfarb–Shanno (BFGS) algorithm. The code is available at 
\begin{center}
\small
\url{https://www.mathworks.com/matlabcentral/fileexchange/124290} 
\end{center}
for download and testing. It requires running the Optimization Toolbox (function \say{fminunc}) and Statistics and Machine Learning Toolbox (function \say{pdist2}). Assembly times were obtained on
a MacBook Air (M1 processor, 2020) with 16 GB memory running MATLAB R2022a.

\section*{Acknowledgments}
 The authors are indebted to Alexej Moskovka for providing a 3D computational  mesh. They express their 
gratitude for the support and the Czech Science Foundation (GACR) grant 21-06569K \say{Scales and shapes in continuum thermomechanics}.

\bibliographystyle{plain}
\bibliography{NLEbib}

\end{document}